\newtheorem{thm}{Theorem}[section]
\newtheorem{lemm}[thm]{Lemma}
\newtheorem{prop}[thm]{Proposition}
\newtheorem{rem}[thm]{Remark}
\newtheorem{cor}[thm]{Corollary}
\theoremstyle{definition}
\keywords{Subordinate cylindrical Brownian motion, stochastic nonlinear heat equation, stochastic nonlinear wave equation, renormalization.}
\subjclass[2020]{60H15, 35K15, 35L71}
\begin{document}

\title[Renormalization of SPDEs driven by subordinate cylindrical Brownian noises]{Renormalization of stochastic nonlinear heat and wave equations driven by subordinate cylindrical Brownian noises}
\author{Hirotatsu Nagoji}
\address{Graduate School of Science, Kyoto University, Kitashirakawa-Oiwakecho, Sakyo-ku, Kyoto 606-8502, Japan}
\email{nagoji.hirotatsu.63x@st.kyoto-u.ac.jp}
\date{}
\thanks{Statements and Declarations: This work was supported by JST SPRING, Grant Number JPMJSP2110. The author has no conflicts of interest directly relevant to the content of this article.}

\begin{abstract}
In this paper, we study the stochastic nonlinear heat equations (SNLH) and stochastic nonlinear wave equations (SNLW) on two-dimensional torus $\mathbb{T}^2 = (\mathbb{R}/2\pi\mathbb{Z})^2$ driven by a subordinate cylindrical Brownian noise, which we define by the time-derivative of a cylindrical Brownian motion subordinated to a nondecreasing c\`adl\`ag stochastic process. To construct the solution, we introduce a suitable renormalization similarly to \cite{heat} and \cite{wave}. For SNLH, we cannot expect the time-continuity for the solutions because the noise is jump-type. Moreover, due to the low time-integrability of the solutions, we could establish a local well-posedness result for SNLH only with a quadratic nonlinearity. On the other hand, for SNLW, the solutions have time-continuity and we can show the local well-posedness for general polynomial nonlinearities. Through this example, we can see that the heat case behaves worse than the wave case in the singular noise of jump-type cases.
\end{abstract}
\maketitle

\section{Introduction}
In this paper, we introduce ``subordinate cylindrical Brownian motions'' $W_L$ and consider the stochastic nonlinear heat equation
\begin{gather}
\begin{cases}
\partial_t u - \Delta u = \pm{u^k} + \partial_tW_L  \label{nlh} \\
u(0) = u_0
\end{cases}
\end{gather}
and stochastic nonlinear wave equation
\begin{gather}
\begin{cases}
\partial_t^2 u - \Delta u = \pm{u^k} + \partial_tW_L  \label{nlw} \\
(u(0),\partial_t u(0)) = (u_0,u_1)
\end{cases}
\end{gather}
on two-dimensional torus $\mathbb{T}^2 = (\mathbb{R}/2\pi\mathbb{Z})^2$
where $k \ge 2$ is an integer. We define $W_L$ by a cylindrical Brownian motion $W$ subordinated to an $\mathbb{R}_+$-valued nondecreasing c\`adla\`g stochastic process $L$. See Section 3 for more precise definition. We investigate the existence of the time-local mild solutions of these equations.
Our motivation to study the equations \eqref{nlh} and \eqref{nlw} is as follows.

Firstly, the noise is ``singular'' and jump-type: The spatial roughness of a subordinate cylindrical Brownian noise in \eqref{nlh} and \eqref{nlw} is essentially same as that of a space-time white noise, so it is a ``singular'' noise especially in the two or higher dimensional case. Stochastic nonlinear heat equations and stochastic nonlinear wave equations with additive space-time white noise (which is a ``continuous'' noise) in the 2 or 3 dimensional settings are extensively studied recent years, for example in \cite{cc18,heat,weber} and \cite{3dwave,wave,global}, especially after the invention of the theory of regularity structure \cite{rs} and the theory of paracontrolled calculus \cite{pc}. This kind of equations which are driven by very rough noises, and often involve renormalization argument, are called singular SPDEs and have attracted the attention of many researchers. Differently from these works, our choice of the noise is a jump-type noise. Through this example, we see that the heat equation behaves worse than the wave equation in the rough noise of jump-type case. More precisely, we show that we need to restrict the nonlinearity for solvability of the heat equation \eqref{nlh}, but we do not need such restrictions for the wave eqution \eqref{nlw}, see Theorems 1.1 and 1.3.  One may think that the heat case should behave more favorably because of the better smoothing property of the heat semigroup. For example, in \cite{oka}, they saw that the heat case behaves better than the wave case in the rough noise setting by considering the fractional derivative of space-time white noise. So our results suggest that this is not the case of the jump-type noise because our situation is the opposite of theirs. 

Secondly, $W_L$ is a L\'evy process which is not Gaussian in general, but renormalization argument can be applied to \eqref{nlh} and \eqref{nlw} by making use of the Gaussianity of $W$: When $L$ is a positive-valued L\'evy process, it is known that $W_L$ is also a L\'evy process. As mentioned above, there are a lot of works on the SPDEs driven by (continuous) Gaussian noises. As driving noises of SPDEs, non-Gaussian L\'evy noises, or jump noises are also considered to be natural both theoretically and in terms of applications and are actively studied by many researchers.
Especially after the publication of the monograph \cite{peslevy}, existence and uniqueness of solutions and many other important properties for SPDEs driven by jump noises are studied as an extension of the results in the case of Gaussian noises, see for example \cite{alve,brze4,mey,rud,zhu}. Also in the context of applications to physics, jump noises are considered as important examples of driving forces of SPDEs, see \cite{brze1,brze2,brze3,dede,deb,kuk1,kuk2,pes} and references therein. In the case of Gaussian noises, we can deal with nonlinear SPDEs driven by ``rougher'' noises thanks to the renormalization argument. In the renormalization process, we heavily use the Gaussianity of the noise, so similar argument can not be applied to the non-Gaussian case. In our setting, however, we can deploy a renormalization by making use of the Gaussianity of $W$, despite $W_L$ itself is not a Gaussian process in general.
We also note that our choice of the noise includes some interesting type of noises such as compound Poisson noise and stable noise, see Section 3. When $L$ is an $\alpha$-stable subordinator (i.e. an $\alpha$-stable L\'evy process with non-decreasing paths) for some $\alpha \in (0,1)$, $\partial_t W_L$ becomes a type of the stable noise. From the physical motivation, SPDEs driven by stable noises receive great attention and are studied by many researchers, see \cite{stab1,stab2,stab3} and references therein.

From these motivations, we consider the equations \eqref{nlh} and \eqref{nlw}. Our method is based on the combination of the probabilistic and analytic argument which is similar to \cite{heat} and \cite{wave}. We note that Da Prato-Debussche trick (first order expansion which they introduced in \cite{heat}) is sufficient and we do not need the theory of regularity structure nor the theory of paracontrolled calculus because we consider the equations on two-dimensional torus. 

\noindent \textbf{Acknowledgement.}
The author would like to thank Professor Seiichiro Kusuoka and Professor Yoshio Tsutsumi for helpful comments on the manuscript.

\subsection{Strategy and main result}

In the following, we briefly discuss how we approach to the equation \eqref{nlh} and \eqref{nlw} by way of renormalization procedure. Our strategy is similar to \cite{heat} and \cite{wave} but needs some modification.

First, we define the approximation operator $P_N$ by
\begin{equation}\label{aph}
P_N f := \frac{1}{2\pi} \sum_{l \in \mathbb{Z}^2 , |l| \le N} \hat{f} (l) e_l \ \ \mathrm{for} \ f \in \mathcal{D}'(\mathbb{T}^2) 
\end{equation}
where $\mathcal{D}'(\mathbb{T}^2)$ denotes the topological dual of $C^\infty(\mathbb{T}^2)$, $ e_l (x) := e^{\sqrt{- 1} l\cdot x}$, and $\hat{f} (l) = \frac{1}{2\pi} \int_{\mathbb{T}^2} f(x) e_l (x) dx $ denotes the Fourier transformation of $f$. By applying this operator to \eqref{nlh}, we consider the following regularized equation:
\begin{equation}
\mathcal{L} u_N = \pm u_N ^k + P_N \partial_t W_L  \label{anl}
\end{equation}
where $\mathcal{L} = \partial_t - \Delta$ or $\partial_t^2 - \Delta$. For simplicity, we consider the equation \eqref{anl} under the initial condition $u_N(0) = 0$ when $\mathcal{L} = \partial_t - \Delta$, and $(u_N(0),\partial_t u_N(0)) =(0,0)$ when $\mathcal{L} = \partial_t^2 - \Delta$ for a while.

Then, we deploy the Da Prato-Debussche trick. We decompose the solution $u_N$ as $u_N = X_N + v_N$ where $X_N$ is the solution of the stochastic linear equation:
\begin{gather}
\mathcal{L} X_N = P_N \partial_t W_L  \label{w}
\end{gather}
with the initial condition 0.
We define $X_N$ by the stochastic convolution
\begin{gather}
X_N(t) = \mathcal{L}^{-1} \partial_tW_L (t) = 
\begin{cases}
\displaystyle{\int^t_0 e^{(t-s)\Delta} dW_L(s)} \ \ \ \ \ \  \ \mathrm{when} \ \mathcal{L} = \partial_t - \Delta \\
\displaystyle{\int^t_0 \frac{\sin((t-s)|\nabla|)}{|\nabla|} dW_L(s)} \ \ \mathrm{when} \ \mathcal{L} = \partial_t^2 - \Delta.
\end{cases}
\end{gather}
See Section 3 for the precise definitions.
Then, $v_N$ solves the equation
\begin{equation} \label{hf}
\mathcal{L} v_N = \pm (v_N + X_N)^k = \pm \sum^k_{l = 0} \binom{k}{l} v_N ^{k - l} X_N ^l 
\end{equation}
under the initial condition 0. 
When we take the limit in $N$ to infinity, $X_N$ does not converge as functions but as distributions in $B^{- \epsilon}_{\infty,\infty} (\mathbb{T}^2)$ for any $\epsilon > 0$, see Section 4. Therefore, $X^l$ is ill-defined for $l \ge 2$. 

In view of this fact, similarly to \cite{heat} and \cite{wave}, we replace $X_N^l$ in \eqref{hf} with the ``Wick product''
\[ X_N ^{\Diamond l} (t) \coloneqq H_l (X_N (t) ; c_N (t)) \]
where $H_l$ is the Hermite polynomial i.e. $H_l$ is defined by 
\[e^{tx - \frac{1}{2}\sigma^2 t^2} = \sum^\infty_{k = 0} \frac{t^k}{k!} H_k (x;\sigma^2) \]
and  $c_N (t)$ is a renormalization constant defined by 
\begin{gather}
c_N (t) \coloneqq 
\begin{cases}
\displaystyle{\frac{1}{(2\pi)}\sum_{l \in \mathbb{Z}^2 , |l| \le N} \int^t_0 e^{2(s-t)|l|^2} dL(s)} \ \ \ \mathrm{when} \ \mathcal{L} = \partial_t - \Delta \\
\displaystyle{\frac{1}{(2\pi)^2} \sum_{l \in \mathbb{Z}^2 , |l| \le N} \int^t_0 \frac{\sin^2 ((t - s)|l|)}{|l|^2} dL(s)} \ \ \ \mathrm{when} \ \mathcal{L} = \partial^2_t - \Delta. \label{gg}
\end{cases}
\end{gather}
Note that $c_N (t)$ depends on $t \in \mathbb{R}_+$ and $\omega \in \Omega$, however, is independent of $x \in \mathbb{T}^2$. Moreover, $c_N$ is an $L$-measurable $\mathbb{R}_+$-valued stochastic process independent of $W$.
Then, we can show that $X^{\Diamond l}_N$ converges to some $X^{\Diamond l}$, see Section 4.
The renormalized equation, given by the replacement $X_N^l$ in \eqref{hf} with $X_N ^{\Diamond l}$ is 
\begin{align}
\mathcal{L} \hat{v}_N &= \pm \sum^k_{l = 0} \binom{k}{l}\hat{v}_N ^{k - l} X_N ^{\Diamond l} = \pm H_k ( \hat{v} + X_N ; c_N) \label{lalala}
\end{align}
where we use the property of the Hermite polynomials
\[ H_k ( a + b ; c) = \sum^k_{l = 0} \binom{k}{l}a^{k - l} H_l (b;c). \]
Therefore, $\hat{u}_N = X_N + \hat{v}_N$ solves the equation 
\begin{align}
\mathcal{L} \hat{u}_N &= \pm H_k ( \hat{u}_N; c_N) + \partial_tW_L. \label{usol}
\end{align}
Then, we can show that $\hat{u}_N$ converges to $\hat{u} = X + \hat{v}$ where $\hat{v}$ is the solution of the equation
\begin{equation} \label{ff}
\mathcal{L} \hat{v} = \pm \sum^k_{l = 0} \binom{k}{l}\hat{v} ^{k - l} X ^{\Diamond l} 
\end{equation}
and we define by $\hat{u}$ the solution of the renormalized equation
\begin{align}   
\mathcal{L}\hat{u} = \pm \hat{u}^{\Diamond k} + \partial_tW_L .
\end{align}

Now, we state the main result. In the following, $\Phi \coloneqq (\partial_t - \Delta)^{-1} \partial_tW_L (t)$ denotes the solution of \eqref{w} with the initial condition $\Phi(0)=0$ and $\Psi \coloneqq  (\partial^2_t - \Delta)^{-1} \partial_tW_L (t)$ denotes the solution of \eqref{w} with the initial condition $(\Psi(0),\partial_t \Psi(0))=(0,0)$.
\begin{thm}\label{adtta}
Let $k=2$, $0<\epsilon<\frac{1}{2}$. Then, there exists a unique local-in-time mild solution of the renormalized stochastic nonlinear heat equation 
\begin{gather}
\begin{cases}
\partial_t u - \Delta u = \pm u^{\Diamond k} + \partial_tW_L \label{hgr}\\
u(0) = u_0  
\end{cases}
\end{gather}
in 
\[\Phi + L^\gamma (\left[0,T\right];B^{2/\gamma-\delta}_{\infty,\infty}(\mathbb{T}^2) ) \cap C([0,T];B^{-\delta}_{\infty,\infty}(\mathbb{T}^2)) \]
$\mathbb{P}$-almost-surely for any $\frac{2}{1-\epsilon}<\gamma<\frac{2}{\epsilon}$, $0<\delta<\frac{2}{\gamma}-\epsilon$ and initial conditions $u_0 \in B^{2/\gamma-\delta}_{\infty,\infty}(\mathbb{T}^2)$. More precisely, there exists a random time $T(\omega)>0$ such that the equation
\begin{equation}
v(t) = e^{t\Delta}u_0 \pm \sum^2_{l = 0} \binom{2}{l}\int^t_0 e^{(t-s)\Delta} v ^{2 - l}(s) \Phi ^{\Diamond l}(s) ds
\end{equation}
has a unique solution in $ L^\gamma (\left[0,T\right];B^{2/\gamma-\delta}_{\infty,\infty}(\mathbb{T}^2) ) \cap C([0,T];B^{-\delta}_{\infty,\infty}(\mathbb{T}^2))$ $\mathbb{P}$-almost-surely, where $\Phi^{\Diamond l}$ are the renormalized powers defined in Proposition \textup{\ref{rere}}.
\end{thm}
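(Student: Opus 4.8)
The plan is to prove the ``more precise'' assertion — local well-posedness of the fixed-point equation for $v$ — by Banach's fixed point theorem in a closed ball of
\[ Y_T := L^\gamma([0,T];B^{2/\gamma-\delta}_{\infty,\infty}(\mathbb{T}^2))\cap C([0,T];B^{-\delta}_{\infty,\infty}(\mathbb{T}^2)), \]
for a random time $T=T(\omega)>0$ chosen small, and then to set $u:=\Phi+v$, which is the claimed mild solution. Introduce the map
\[ \Gamma(v)(t) := e^{t\Delta}u_0 \pm \sum_{l=0}^{2}\binom{2}{l}\int_0^t e^{(t-s)\Delta}\,v^{2-l}(s)\,\Phi^{\Diamond l}(s)\,ds . \]
First I would fix, using Proposition \ref{rere} and Section 4, a finite random constant $K=K(\omega)$ dominating the norms of the three stochastic inputs $\Phi^{\Diamond 0}=1$, $\Phi^{\Diamond 1}$, $\Phi^{\Diamond 2}$ in the relevant spaces; these spaces have spatial regularity slightly below $-\epsilon$ and only a finite time-integrability exponent, and it is exactly the hypothesis $\gamma<2/\epsilon$ that makes the $\Phi^{\Diamond l}$ fit into the analysis. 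Once $K$ is fixed the argument becomes deterministic, and $R$, $T$ will depend only on $\|u_0\|_{B^{2/\gamma-\delta}_{\infty,\infty}}$ and $K$.

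The deterministic toolkit has three ingredients. (i) Heat-semigroup smoothing (Schauder) estimates in the present scale: $\|e^{\tau\Delta}f\|_{B^{\beta}_{\infty,\infty}}\lesssim \tau^{-(\beta-\alpha)/2}\|f\|_{B^{\alpha}_{\infty,\infty}}$ for $\beta\ge\alpha$, which yield $\|e^{t\Delta}u_0\|_{Y_T}\lesssim\|u_0\|_{B^{2/\gamma-\delta}_{\infty,\infty}}$ (with $e^{t\Delta}u_0\in C([0,T];B^{-\delta}_{\infty,\infty})$ because $u_0$ already has the higher regularity $2/\gamma-\delta$) and a Duhamel bound $\|\int_0^{\cdot}e^{(\cdot-s)\Delta}F(s)\,ds\|_{Y_T}\lesssim T^{\kappa}\|F\|_{L^{\gamma_1}([0,T];B^{-\epsilon}_{\infty,\infty})}$ for some $\kappa>0$ and suitable $\gamma_1$, the factor $T^{\kappa}$ being extracted by working with non-endpoint time exponents (Young's, resp. Hardy--Littlewood--Sobolev, inequality in $s$). (ii) Besov multiplication: $\|fg\|_{B^{-\epsilon}_{\infty,\infty}}\lesssim\|f\|_{B^{2/\gamma-\delta}_{\infty,\infty}}\|g\|_{B^{-\epsilon}_{\infty,\infty}}$, which holds precisely because $(2/\gamma-\delta)+(-\epsilon)>0$, i.e. $\delta<2/\gamma-\epsilon$ — this is the place that hypothesis is used — combined with the algebra property $\|v^2\|_{B^{2/\gamma-\delta}_{\infty,\infty}}\lesssim\|v\|_{B^{2/\gamma-\delta}_{\infty,\infty}}^2$ (valid since $2/\gamma-\delta>0$); together these give $v^{2-l}\Phi^{\Diamond l}\in B^{-\epsilon}_{\infty,\infty}$ for $l=0,1,2$. (iii) Hölder's inequality in time, used to distribute the integrability budget $\gamma$ of $v$ among the $2-l$ copies of $v$ and the low-integrability factor $\Phi^{\Diamond l}$.

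Assembling these, for $v,\tilde v$ in the ball $B_R\subset Y_T$ one applies the multiplication estimate pointwise in $s$, then Hölder in $s$, then the Duhamel/Schauder estimate, to each of the terms $l=0,1,2$, obtaining $\|\Gamma(v)\|_{Y_T}\le C\|u_0\|_{B^{2/\gamma-\delta}_{\infty,\infty}}+C\,T^{\kappa}(1+K)(R+R^2)$ and $\|\Gamma(v)-\Gamma(\tilde v)\|_{Y_T}\le C\,T^{\kappa}(1+K)(1+R)\|v-\tilde v\|_{Y_T}$. Choosing $R\sim 1+\|u_0\|_{B^{2/\gamma-\delta}_{\infty,\infty}}$ and then $T=T(\omega)>0$ so small that $C\,T^{\kappa}(1+K)(1+R)\le\tfrac12$ makes $\Gamma$ a contraction of $B_R$ into itself, and its unique fixed point is the solution $v\in Y_T$; uniqueness in all of $Y_T$ follows by comparing two solutions on a short enough subinterval where both lie in a common ball and then iterating. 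Continuity of the Duhamel part in $B^{-\delta}_{\infty,\infty}$ follows from the $L^{\gamma_1}_t$-bound on the integrand by dominated convergence, so $v\in C([0,T];B^{-\delta}_{\infty,\infty})$, and the jumps of $u=\Phi+v$ come from $\Phi$ alone.

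The step I expect to be the main obstacle is the time-integrability bookkeeping hidden in the phrase ``suitable $\gamma_1$, some $\kappa>0$'' above. Unlike the space-time white-noise case of \cite{heat}, where $\Phi^{\Diamond l}$ is continuous (hence $L^\infty$) in time, here $\Phi^{\Diamond l}$ lies only in $L^{p}_t$ for $p$ bounded above, so the integrability budget is tight; meanwhile the Schauder gain of $2/\gamma$ spatial derivatives costs a borderline-nonintegrable weight $(t-s)^{-1/\gamma}$ in time, and one must check that all the exponent relations produced by the Hölder and Young/HLS steps close with a little room to spare — this is what forces $\gamma>\tfrac{2}{1-\epsilon}$ and $\delta<\tfrac{2}{\gamma}-\epsilon$ and produces the gain $T^{\kappa}$. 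It is also why the result is confined to $k=2$: for $k\ge 3$ the higher-order term $v^{k-1}\Phi^{\Diamond 1}$ would demand more time-integrability of $v$ than the budget $\gamma<2/\epsilon$ allows, and the contraction no longer closes.
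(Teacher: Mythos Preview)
Your approach is essentially the paper's: a contraction in $Y_T$ via heat-semigroup smoothing, the Besov product estimate (Lemma~\ref{pro}), and H\"older in time, after freezing the norms of the $\Phi^{\Diamond l}$. The paper resolves the exponent bookkeeping you flag as the obstacle by placing all three nonlinear terms in the single space $L^{1/(1-\epsilon)}([0,T];B^{-2\epsilon}_{\infty,\infty})$ and applying one Schauder lemma (Lemma~\ref{al}) with $r_1=\tfrac{1}{1-\epsilon}$; the condition $\gamma>\tfrac{2}{1-\epsilon}$ is then exactly what makes the H\"older step produce the positive power $T^{(1-\epsilon)/2-1/\gamma}$.

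One correction to your closing aside: the obstruction for $k\ge3$ is not the mixed term $v^{k-1}\Phi$ but the pure noise term $\Phi^{\Diamond k}$ itself --- by Proposition~\ref{rere} it lies only in $L^{\gamma'}_t$ with $\gamma'<\tfrac{2}{(1-\epsilon)k}<1$, so the Duhamel integral $\int_0^t e^{(t-s)\Delta}\Phi^{\Diamond k}(s)\,ds$ is not even defined.
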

\begin{rem}
When $L(t) = t$ i.e. $W_L=W$, the local well-posedness of the equation for all $k\ge2$ is proved in \cite{heat}.
However, in our setting, the lack of time-integrability of the renormalized power $\Phi^{\Diamond k}$ causes a problem and we have not been able to solve the equation \eqref{hgr} for $k\ge 3$. See Section \textup{4} for more details. 
\end{rem}

For the renormalized stochastic wave equation \eqref{hgi}, there holds the following result.
\begin{thm}\label{adta}
Let $k\ge2$. Then, there exists a unique local-in-time mild solution of the renormalized stochastic nonlinear wave equation
\begin{gather}
\begin{cases}
\partial_t^2 u - \Delta u = \pm u^{\Diamond k} + \partial_tW_L  \label{hgi} \\
(u(0),\partial_t u(0)) = (u_0,u_1) 
\end{cases}
\end{gather}
in  
\[ \Psi +  C([0,T];H^{1-\epsilon}(\mathbb{T}^2)) \cap C^1([0,T];H^{-\epsilon}(\mathbb{T}^2)) \]
$\mathbb{P}$-almost-surely for any $0<\epsilon<\frac{1}{2(k-1)}$ and initial conditions $(u_0,u_1) \in H^{1-\epsilon}(\mathbb{T}^2)\times H^{-\epsilon}(\mathbb{T}^2)$. More precisely, there exists a random time $T(\omega)>0$ such that the equation
\begin{equation}\label{a}
v(t) = \cos(t|\nabla|)u_0 + \frac{\sin((t-s)|\nabla|)}{|\nabla|}u_1 \pm \sum^k_{l = 0} \binom{k}{l}\int^t_0 \frac{\sin((t-s)|\nabla|)}{|\nabla|} v ^{k - l}(s) \Psi ^{\Diamond l}(s) ds
\end{equation} 
has a unique solution in $C([0,T];H^{1-\epsilon}(\mathbb{T}^2)) \cap C^1([0,T];H^{-\epsilon}(\mathbb{T}^2))$ $\mathbb{P}$-almost-surely, where $\Psi^{\Diamond k}$ are the renormalized powers defined in Proposition \textup{\ref{yy}}.
\end{thm}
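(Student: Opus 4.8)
I would prove Theorem \ref{adta} by the Da Prato--Debussche scheme: solve the fixed-point equation \eqref{a} for $v$ via Banach's fixed-point theorem in the Banach space $X_T:=C([0,T];H^{1-\epsilon}(\mathbb{T}^2))$, treating the renormalized powers $\Psi^{\Diamond l}$, $l=0,\dots,k$, as given data. By Proposition \ref{yy} they lie in $C([0,T];B^{-\epsilon'}_{\infty,\infty}(\mathbb{T}^2))$ a.s.\ for every $\epsilon'>0$; fixing once and for all some $\epsilon'\in(0,\epsilon)$, set $M_l(\omega):=\sup_{0\le t\le 1}\|\Psi^{\Diamond l}(t)\|_{B^{-\epsilon'}_{\infty,\infty}}<\infty$ a.s. Thus, for each $\omega$, constructing $v$ is a purely deterministic problem; the randomness enters only through the sizes $M_l(\omega)$ and hence through the length $T=T(\omega)$ of the interval of existence. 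It is precisely the time-continuity of $\Psi^{\Diamond l}$---which, unlike the heat case where $\Phi^{\Diamond l}$ picks up the jumps of $W_L$, survives here because the wave kernel $\tfrac{\sin((t-s)|\nabla|)}{|\nabla|}$ vanishes on the diagonal $s=t$---that makes it legitimate to work in the space $X_T$ of continuous-in-time trajectories.

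Write $\Gamma$ for the map sending $v$ to the right-hand side of \eqref{a}. Two deterministic ingredients are needed. The first is the mapping behaviour of the wave propagator: the Duhamel operator $f\mapsto\int_0^t\tfrac{\sin((t-s)|\nabla|)}{|\nabla|}f(s)\,ds$ sends $C([0,T];H^{\sigma-1})$ into $C([0,T];H^{\sigma})$ with operator norm $\lesssim T$ (it gains one spatial derivative), its time-derivative $f\mapsto\int_0^t\cos((t-s)|\nabla|)f(s)\,ds$ sends $C([0,T];H^{\sigma-1})$ into itself, and the free evolution $\cos(t|\nabla|)u_0+\tfrac{\sin(t|\nabla|)}{|\nabla|}u_1$ lies in $C([0,T];H^{1-\epsilon})\cap C^1([0,T];H^{-\epsilon})$ with norm $\lesssim\|u_0\|_{H^{1-\epsilon}}+\|u_1\|_{H^{-\epsilon}}$. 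The second, and the heart of the proof, is the multilinear estimate
\[
\bigl\|v_1\cdots v_{k-l}\,\Psi^{\Diamond l}(t)\bigr\|_{H^{-\epsilon}(\mathbb{T}^2)}\;\lesssim\;M_l\prod_{j=1}^{k-l}\|v_j\|_{H^{1-\epsilon}(\mathbb{T}^2)},\qquad l=0,1,\dots,k,
\]
together with its difference (telescoping) version. One proves this by Bony's paraproduct decomposition: first put the product of powers $v_1\cdots v_{k-l}$ into a Besov space $B^{\alpha}_{p,p}(\mathbb{T}^2)$ of positive regularity $\alpha>0$ and finite integrability $p\in[2,\infty)$, by iterating the fractional Leibniz rule together with the two-dimensional Sobolev embeddings $H^{1-\epsilon}\hookrightarrow B^{\alpha}_{q,q}$ (valid for $q\ge 2$ and $\alpha\le\tfrac2q-\epsilon$) and $H^{1-\epsilon}\hookrightarrow L^{2/\epsilon}$---recall $H^{1-\epsilon}(\mathbb{T}^2)$ is not an algebra, so each multiplication costs regularity---and then multiply by $\Psi^{\Diamond l}\in B^{-\epsilon'}_{\infty,\infty}$, the resonant term being summable since $\alpha>\epsilon'$ and the output landing in $B^{-\epsilon'}_{p,p}\hookrightarrow H^{-\epsilon}$. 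Chasing the exponents, admissible choices of $\alpha$ and $p$ exist for every $l=1,\dots,k$ under the hypothesis $\epsilon<\tfrac1{2(k-1)}$ (the pure-power case $l=0$, i.e.\ $v^k\in H^{-\epsilon}$, is easier and imposes a milder restriction).

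Combining the two ingredients, $\Gamma$ maps the ball $B_R:=\{v\in X_T:\|v\|_{X_T}\le R\}$ into itself and is a strict contraction on it once $R$ is of the order $\|u_0\|_{H^{1-\epsilon}}+\|u_1\|_{H^{-\epsilon}}+1$ and $T=T(\omega)$ is small enough, depending on $R$ and on $\max_{0\le l\le k}M_l(\omega)$---the factor $T$ furnished by the Duhamel time-integral is what makes the contraction constant $<1$. Since $\max_l M_l(\omega)<\infty$ a.s., we may take $T(\omega)>0$ a.s., and Banach's theorem yields a unique $v\in B_R$ solving \eqref{a}; the required continuity in $t$ follows from the continuity of the multiplication maps and of the Duhamel integral. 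Feeding this $v$ back into \eqref{a} and using the bound on the time-derivative of the Duhamel operator shows $v\in C^1([0,T];H^{-\epsilon})$, so $u:=\Psi+v$ lies in the asserted space; and $u$ is the mild solution of \eqref{hgi} because $\hat u_N=X_N+\hat v_N\to\Psi+v=u$ as $N\to\infty$ (by Proposition \ref{yy} and the convergence of the $\hat v_N$). Uniqueness in the full space $C([0,T];H^{1-\epsilon})\cap C^1([0,T];H^{-\epsilon})$, and not merely in $B_R$, follows in the standard way: two solutions coincide on a possibly shorter interval by the contraction property applied with a larger radius, and this interval is extended by iteration as long as the solutions stay bounded.

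The main obstacle is the multilinear estimate, and within it the worst term $l=1$, namely $v^{k-1}\Psi^{\Diamond 1}\in C([0,T];H^{-\epsilon})$: one has to extract the maximal regularity from the power $v^{k-1}$ of an $H^{1-\epsilon}$-function on $\mathbb{T}^2$, trading regularity against integrability via Sobolev embedding, so that its product against a distribution of regularity $(-\epsilon')$ with $\epsilon'>0$ arbitrary still belongs to $H^{-\epsilon}$; this is the computation behind the $k$-dependent threshold $\epsilon<\tfrac1{2(k-1)}$. The remaining steps---the propagator estimates, the contraction, and the passage $N\to\infty$---are routine once Proposition \ref{yy} is in hand.
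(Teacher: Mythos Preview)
Your proposal is correct and follows the same route as the paper: reduce via Proposition~\ref{yy} to a deterministic fixed-point problem with data $\Xi_l=\Psi^{\Diamond l}\in C([0,T];B^{-\epsilon'}_{\infty,\infty})$ and solve it pathwise by a contraction argument. The paper's own proof (Section~5.2) is much terser: it simply quotes the deterministic local well-posedness result from \cite{wave} (stated as Theorem~\ref{mnm}) as a black box, whereas you sketch the contraction mapping and the multilinear product estimate explicitly. One minor remark: the direct Sobolev/paraproduct bookkeeping you outline in fact yields the larger range $\epsilon<\tfrac{1}{k-1}$ rather than the stated $\epsilon<\tfrac{1}{2(k-1)}$; the paper simply inherits the latter threshold from \cite{wave}, so your argument comfortably covers the claimed range.
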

\begin{rem}
Differently from the equation \eqref{hgr}, we can solve the equation \eqref{hgi} for all $k\ge2$. This is due to the difference of time-integrability between $\Phi^{\Diamond k}$ and $\Psi^{\Diamond k}$. See Propositions \textup{\ref{rere}} and \textup{\ref{yy}} for more details.
\end{rem}
\begin{rem}\label{scifi}
In view of Proposition \textup{\ref{yy}}, we can apply the argument in \cite{wave} to construct the solution of \eqref{a}. As a consequence,
the local well-posedness holds in the space 
\[C([0,T];H^s(\mathbb{T}^2)) \cap C^1([0,T];H^{s-1}(\mathbb{T}^2))\cap L^q([0,T];L^r(\mathbb{T}^2))\] for $0\ll s<1$ and $s$-wave-admissible pair $(q,r)$. This result is proved by the Strichartz estimates for wave equations. See \cite{wave} for the details. 
\end{rem}
\begin{rem}
For the equation with a cubic nonlinearity 
\begin{equation}\label{msn}
\partial_t^2 u - \Delta u = - u^{\Diamond 3} + \partial_tW_L,
\end{equation}
it is expected that we can show the existence of the global solution. When $L(t) = t$ i.e. $W_L=W$, the global well-posedness of the equation is proved in \cite{global} by the I-method. Once we control the behaviors of $\Psi^{\Diamond l}$ by the probabilistic argument, their method is based on the pathwise deterministic argument. It seems that their proof can be applied to the equation \eqref{msn}.
\end{rem}

\begin{rem}
By a similar argument, the statements of Theorems \ref{adtta} and \ref{adta} are easily extended to the case of the equations with general polynomial nonlinearities
\begin{equation}
\partial_t u - \Delta u = \sum^2_{l=0} a_l u^{\Diamond l} + \partial_t W_L
\end{equation}
and
\begin{equation}
\partial_t^2 u - \Delta u = \sum^k_{l=0} a_l u^{\Diamond l} + \partial_t W_L
\end{equation}
where $a_l \in \mathbb{R}$ and we interpret as $u^{\Diamond 0} = 1$.
\end{rem}

\begin{rem}
To construct the renormalized powers $\Phi^{\Diamond l}$ and $\Psi^{\Diamond l}$, we used the approximation operator $P_N$ defined in \eqref{aph}. More precisely, we defined as
\[ \Phi^{\Diamond l} \coloneqq \lim_{N\rightarrow \infty}H_l \left( P_N \Phi ;c_N(t)\right) \]
where $c_N(t)$ is the renormalization constant defined in \eqref{gg}. Actually, one may use a more general approximation procedure. Let $\psi$ be a radially symmetric Borel function on $\mathbb{R}^2$ with $0\le\psi\le 1$,
\[ \sup_{x\in\mathbb{R}^2\backslash \{0\}} \frac{|\psi(x) - 1|}{|x|^\theta} < +\infty \ \ \ \mbox{for some}\ \theta >0,\]
and 
\[\sup_{x\in\mathbb{R}^2} |x|^\eta |\psi(x)| <+\infty \ \ \ \mbox{for some}\ \eta>0. \]
We define an approximation operator $Q_N$ on $\mathcal{D}'(\mathbb{T}^2)$ by
\[ Q_N f \coloneqq \psi\left(\frac{\nabla}{N}\right) \coloneqq \frac{1}{2\pi}\sum_{l\in \mathbb{Z}^2}\psi \left(\frac{l}{N}\right)\hat{f}(l)e_l. \]
This kind of approximation operator is considered in \cite{exp}. Note that $P_N$ in \eqref{aph} is an example of $Q_N$.
Then, similarly, we can show the existence of the limit  
\[ \Phi^{\Diamond l} \coloneqq \lim_{N\rightarrow \infty}H_l \left( Q_N \Phi ;c^Q_N(t)\right) \]
where $c^Q_N$ depends on $Q_N$. One can prove that $\Phi^{\Diamond l}$ does not depend on the choise of $Q_N$.
\end{rem}

The organization of the present paper is as follows. In Section 2, we briefly recall the basics of Besov spaces, Sobolev spaces, and  Gaussian random variables. In Section 3, we introduce the subordinate cylindrical Brownian motion $W_L$ and define the stochastic integral with respect to $W_L$. In Section 4, we prove the existence of the renormalized powers (Wick powers) constructed from $W_L$ and study their properties such as the spacial regularity and time-integrability. We deploy the renormalization in this section. In Section 4.3, we also consider the renormalized powers of the stationary Ornstein-Uhlenbeck processes. In Section 5, we prove Theorem \ref{adtta} and Theorem \ref{adta}, which state the local well-posedness of the renormalized equations \eqref{hgr} and \eqref{hgi}.

\subsection{Notations}
We use the following notations:
\begin{enumerate}
\item
$\mathbb{R}_+ = \{ t\in \mathbb{R};t \ge 0\},\quad \mathbb{R}_- = \{ t\in \mathbb{R};t \le 0\}$ 
\item 
$\mathcal{D}'(\mathbb{T}^d)$ denotes the topological dual of $C^\infty(\mathbb{T}^d)$.
\item
We write $a\lesssim b$ if there holds $a\le C b$ for some constant $C$ independent of the variables under consideration. When $C$ depends on the variable $x$ and we want to emphasize it, we write $a \lesssim_x b$. We also write $a \simeq b$ if $a\lesssim b$ and $a \gtrsim b$.
\item
For $k, l \in \mathbb{Z}^d$,
\begin{gather}
\delta_{k,l} \coloneqq 
\begin{cases}
\displaystyle{1} \ \ \ \mathrm{when} \ k=l \\
\displaystyle{0} \ \ \ \mathrm{when} \ k\neq l \notag
\end{cases}
\end{gather}
denotes the Kronecker delta.
\end{enumerate}

\section{Preliminary}

\subsection{Function spaces}
For a distribution $f \in \mathcal{D}' (\mathbb{T}^d)$, we define the Fourier transformation by
\[ \mathcal{F} f (l) = \hat{f} (l) = \langle f, e_{-l} \rangle \ \ \ \mathrm{for}\  l \in \mathbb{Z}^d \]
where $\{e_l (x) \}_{l \in \mathbb{Z}^2} = \{e^{\sqrt{-1}l\cdot x}\}_{l \in \mathbb{Z}^2}$ denotes the Fourier basis of $L^2 (\mathbb{T}^2)$ and $\langle \ , \ \rangle$ denotes the pairing defined as the extension of the normalized $L^2$-inner product
\[ \langle f, g \rangle \coloneqq \frac{1}{(2\pi)^{\frac{d}{2}}} \int_{\mathbb{T}^d} f(x)g(x) dx .\]

Sobolev spaces (Bessel potential spaces) $W^{\alpha, p}(\mathbb{T}^d)$ for $\alpha \in \mathbb{R}, 1 \le p \le \infty$ are defined as the space of all $f \in \mathcal{D}' (\mathbb{T}^d)$ with 
\[ \| f \|_{W^{\alpha, p}(\mathbb{T}^d)} \coloneqq \left\| \langle \nabla \rangle^{\alpha} f \right\|_{L^p (\mathbb{T}^d)} = \| \sum_{l \in \mathbb{Z}^d} \left( 1 + |l|^2 \right)^\frac{\alpha}{2} \hat{f}(l)e_l \|_{L^p(\mathbb{T}^d)} < \infty. \]

Next, in order to define Besov spaces, we introduce the Littlewood-Paley blocks. Let $\chi, \rho \in C_c ^\infty (\mathbb{R}^d)$ be $\mathbb{R}_+$-valued functions such that
\begin{enumerate}
\item
$\mathrm{supp}(\chi) \subset{B(4)},\  \mathrm{supp}(\rho) \subset{B(4)\backslash B(1)}$,
\item
$\chi (x) + \sum_{i=0}^\infty \rho (2^{-i}x) = 1$ for any $x \in \mathbb{R}^2$.
\end{enumerate}
Such a pair $(\chi,\rho)$ indeed exists, see \cite[Section 2.2]{bcd} for the proof.
For the convenience, we write
\[ \rho_{-1} = \chi, \ \rho_{j} = \rho(2^{-j}\cdot) \ \ \ \mathrm{for} \ j \ge 0 \]
and define 
\[ \Delta_m f \coloneqq \rho_m(\nabla) f = \frac{1}{2\pi} \sum_{l \in \mathbb{Z}^d} \rho_m(l) \hat{f}(l) e_l \ \ \mathrm{for} \ f \in \mathcal{D}'(\mathbb{T}^d).\]
Besov spaces $B^\alpha_{p,q}(\mathbb{T}^d) $ for $1 \le p,q \le \infty, \alpha \in \mathbb{R}$ are defined as the space of all $f \in \mathcal{D}'(\mathbb{T}^d)$ with the finite Besov norm
\[ \| f \|_{B^\alpha_{p,q} (\mathbb{T}^d)} \coloneqq \| 2^{m\alpha} \Delta_m f \|_{l^q(L^p (\mathbb{T}^d))} < \infty .\]

\begin{lemm}[Product estimates]\label{pro}
For $\alpha, \beta \in \mathbb{R}$ with $\alpha + \beta >0$, 
\[ \|fg\|_{B^{\alpha \wedge \beta}_{\infty,\infty}(\mathbb{T}^d)} \lesssim \|f\|_{B^\alpha_{\infty,\infty}(\mathbb{T}^d)} \|g\|_{B^\beta_{\infty,\infty}(\mathbb{T}^d)}. \]  
\end{lemm}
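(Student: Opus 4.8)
The plan is to invoke Bony's paraproduct decomposition. Writing $S_j f := \sum_{i \le j-2}\Delta_i f$ for the low-frequency truncations, set
\[
T_f g := \sum_{j} (S_j f)\,\Delta_j g, \qquad R(f,g) := \sum_{|i-j|\le 1}\Delta_i f\,\Delta_j g,
\]
so that $fg = T_f g + T_g f + R(f,g)$, and it suffices to estimate each of the three pieces in $B^{\alpha\wedge\beta}_{\infty,\infty}(\mathbb{T}^d)$ by $\|f\|_{B^\alpha_{\infty,\infty}(\mathbb{T}^d)}\|g\|_{B^\beta_{\infty,\infty}(\mathbb{T}^d)}$. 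Without loss of generality assume $\alpha\le\beta$, so $\alpha\wedge\beta=\alpha$; then $\alpha+\beta>0$ forces $\beta>0$, which yields the embedding $B^\beta_{\infty,\infty}(\mathbb{T}^d)\hookrightarrow L^\infty(\mathbb{T}^d)$ (sum $\|\Delta_j\,\cdot\|_{L^\infty}\le 2^{-j\beta}\|\cdot\|_{B^\beta_{\infty,\infty}}$ over $j\ge-1$). Throughout I will freely use the monotone embedding $B^{s_1}_{\infty,\infty}(\mathbb{T}^d)\hookrightarrow B^{s_2}_{\infty,\infty}(\mathbb{T}^d)$ for $s_1\ge s_2$, together with the frequency-support rule that $\Delta_j(\Delta_i u\,\Delta_{i'}v)$ vanishes unless $2^j$ is compatible with the frequency scales of $\Delta_i u$ and $\Delta_{i'}v$.

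For the resonant term, $\Delta_j R(f,g)$ only receives contributions from $\Delta_i f\,\Delta_{i'}g$ with $|i-i'|\le1$ and $i\gtrsim j$, so
\[
\|\Delta_j R(f,g)\|_{L^\infty} \lesssim \sum_{i\gtrsim j}\|\Delta_i f\|_{L^\infty}\,\|\Delta_{i'}g\|_{L^\infty} \lesssim \Big(\sum_{i\gtrsim j} 2^{-i(\alpha+\beta)}\Big)\|f\|_{B^\alpha_{\infty,\infty}}\|g\|_{B^\beta_{\infty,\infty}} \lesssim 2^{-j(\alpha+\beta)}\|f\|_{B^\alpha_{\infty,\infty}}\|g\|_{B^\beta_{\infty,\infty}},
\]
where convergence of the geometric series is exactly the point at which $\alpha+\beta>0$ enters; hence $R(f,g)\in B^{\alpha+\beta}_{\infty,\infty}\hookrightarrow B^{\alpha}_{\infty,\infty}$ (valid since $\beta\ge0$). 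For the paraproduct $T_g f$, the low-frequency factor is $g$, so $\|S_j g\|_{L^\infty}\lesssim\|g\|_{L^\infty}\lesssim\|g\|_{B^\beta_{\infty,\infty}}$ and
\[
\|\Delta_j T_g f\|_{L^\infty}\lesssim \|S_j g\|_{L^\infty}\,\|\Delta_j f\|_{L^\infty}\lesssim 2^{-j\alpha}\|g\|_{B^\beta_{\infty,\infty}}\|f\|_{B^\alpha_{\infty,\infty}},
\]
i.e. $T_g f\in B^{\alpha}_{\infty,\infty}$ directly.

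The remaining piece $T_f g$, whose low-frequency factor $f$ may have negative regularity, is where the only real care is needed. One has $\|S_j f\|_{L^\infty}\lesssim\sum_{i\le j-2}2^{-i\alpha}\|f\|_{B^\alpha_{\infty,\infty}}$. If $\alpha<0$ this is $\lesssim 2^{-j\alpha}\|f\|_{B^\alpha_{\infty,\infty}}$, whence $\|\Delta_j T_f g\|_{L^\infty}\lesssim 2^{-j(\alpha+\beta)}\|f\|_{B^\alpha_{\infty,\infty}}\|g\|_{B^\beta_{\infty,\infty}}$ and $T_f g\in B^{\alpha+\beta}_{\infty,\infty}\hookrightarrow B^{\alpha}_{\infty,\infty}$. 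If $\alpha>0$ the sum is bounded by $\|f\|_{B^\alpha_{\infty,\infty}}$ and $T_f g\in B^{\beta}_{\infty,\infty}\hookrightarrow B^{\alpha}_{\infty,\infty}$; if $\alpha=0$ the sum costs an extra factor $\lesssim j$, but then $\beta>0=\alpha$ and the linear loss is absorbed by the exponential gain $2^{-j(\beta-\alpha)}$, giving $T_f g\in B^{\alpha}_{\infty,\infty}$ all the same. Summing the three contributions proves the estimate. I expect the main obstacle to be nothing more than this bookkeeping around the sign of $\alpha$ (and the $\alpha=0$ endpoint); alternatively, one may simply quote the paraproduct and remainder estimates from \cite{bcd}.
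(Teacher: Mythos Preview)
Your proof is correct and follows exactly the approach the paper invokes: the paper's own proof consists solely of the sentence ``This follows from the paraproduct estimates'' together with a citation to \cite{pc}, and you have simply written out in full the Bony decomposition $fg = T_f g + T_g f + R(f,g)$ and the standard paraproduct/resonant estimates that underlie that citation. Your handling of the three cases $\alpha<0$, $\alpha>0$, $\alpha=0$ for $T_f g$ is accurate (and slightly more careful than strictly necessary, since any $\epsilon$-loss could be absorbed), so there is nothing to add.
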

\begin{proof}
This follows from the paraproduct estimates. See \cite[Lemma 2.1]{pc}, for example.
\end{proof}

\begin{lemm}[Besov embeddings] \label{emb}
Let $1 \le p_1 \le p_2 \le \infty, 1\le q_1 \le q_2 \le \infty$ and $\alpha \in \mathbb{R}$. Then, $ B^\alpha_{p_1,q_1}(\mathbb{T}^d)$ is continuously embedded in $B^{\alpha - d(\frac{1}{p_1}-\frac{1}{p_2})}_{p_2,q_2}(\mathbb{T}^d)$.
\end{lemm}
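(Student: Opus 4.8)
The plan is to reduce the claim to two standard ingredients: Bernstein's inequality for frequency-localized distributions, and the elementary monotonicity $\|\cdot\|_{\ell^{q_2}}\le\|\cdot\|_{\ell^{q_1}}$ of the $\ell^q$-norms in $q$ (valid since $q_1\le q_2$). Neither raises a genuine difficulty, so the argument is essentially bookkeeping; one may alternatively just quote \cite{bcd}.

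First I would record the periodic Bernstein inequality: if $g\in\mathcal{D}'(\mathbb{T}^d)$ satisfies $\hat g(l)=0$ for $|l|>C2^m$, then
\[ \|g\|_{L^{p_2}(\mathbb{T}^d)}\lesssim 2^{md\left(\frac{1}{p_1}-\frac{1}{p_2}\right)}\|g\|_{L^{p_1}(\mathbb{T}^d)}. \]
The quick way to see this is to pick $\tilde\rho\in C_c^\infty(\mathbb{R}^d)$ equal to $1$ on $B(2C)$, so that $g=K_m*g$ with $K_m$ the periodization of $2^{md}\,(\mathcal{F}^{-1}\tilde\rho)(2^m\cdot)$; Young's convolution inequality on the torus with $r$ defined by $1+\frac{1}{p_2}=\frac{1}{r}+\frac{1}{p_1}$ (note $r\ge 1$ since $p_1\le p_2$) gives $\|g\|_{L^{p_2}}\le\|K_m\|_{L^r}\|g\|_{L^{p_1}}$, while the $2^m$-scaling of $K_m$ yields $\|K_m\|_{L^r(\mathbb{T}^d)}\lesssim 2^{md\left(1-\frac{1}{r}\right)}=2^{md\left(\frac{1}{p_1}-\frac{1}{p_2}\right)}$.

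Next I would apply this with $g=\Delta_m f$, which is legitimate since each $\rho_m$ is supported in a ball of radius $\lesssim 2^m$, to get $\|\Delta_m f\|_{L^{p_2}}\lesssim 2^{md\left(\frac{1}{p_1}-\frac{1}{p_2}\right)}\|\Delta_m f\|_{L^{p_1}}$ uniformly in $m\ge-1$. Multiplying through by $2^{m\left(\alpha-d\left(\frac{1}{p_1}-\frac{1}{p_2}\right)\right)}$, taking the $\ell^{q_2}$-norm in $m$ on the left, and bounding the resulting $\ell^{q_2}$-norm on the right by the $\ell^{q_1}$-norm, gives
\[ \|f\|_{B^{\alpha-d\left(\frac{1}{p_1}-\frac{1}{p_2}\right)}_{p_2,q_2}(\mathbb{T}^d)}\lesssim\Big\|\,2^{m\alpha}\|\Delta_m f\|_{L^{p_1}(\mathbb{T}^d)}\,\Big\|_{\ell^{q_1}_m}=\|f\|_{B^\alpha_{p_1,q_1}(\mathbb{T}^d)}, \]
which is exactly the asserted continuous embedding.

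The only step where I would expect to have to be slightly careful is the periodic Bernstein inequality itself — specifically, checking that the mollifier kernel $K_m$ on $\mathbb{T}^d$ inherits the $2^m$-scaling of its counterpart on $\mathbb{R}^d$, so that $\|K_m\|_{L^r(\mathbb{T}^d)}\lesssim 2^{md(1-1/r)}$ holds with a constant uniform in $m$ (this uses the Schwartz decay of $\mathcal{F}^{-1}\tilde\rho$ when comparing the periodization to the function on $\mathbb{R}^d$). Everything else is immediate from Young's inequality and the monotonicity of the $\ell^q$-norms.
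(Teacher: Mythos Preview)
Your argument is correct and is precisely the standard Bernstein-inequality proof; the paper itself does not give a proof but simply refers to \cite[Proposition 2.71]{bcd}, whose argument on $\mathbb{R}^d$ is exactly the one you have written out (Bernstein for each Littlewood--Paley block, then $\ell^{q_2}\le\ell^{q_1}$). So your proposal matches the referenced approach, just made explicit for $\mathbb{T}^d$.
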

\begin{proof}
See \cite[Proposition 2.71]{bcd} for the proof on $\mathbb{R}^d$. The proof on $\mathbb{T}^d$ is similar. 
\end{proof}

\subsection{Probabilistic tools}
Let $H_k$ be the $k$th Hermite polynomial
\[ H_k (x) = \sum^{[\frac{k}{2}]}_{i=0} (-1)^i \frac{k!}{2^i i! (k-2i)!} x^{k-2i} .\]
The first Hermite polynomials are given by 
\[ H_0 (x) = 1,\  H_1(x) = x,\ H_2(x) = x^2 -1,\ H_3(x) = x^3 -3x. \]
For $\sigma^2 >0$, the generalized Hermite polynomial $H_k (x;\sigma^2)$ is defined by
\[ H_k (x;\sigma^2) = \sigma^k H_k \left(\frac{x}{\sigma}\right). \]
The Hermite polynomials satisfies
\[ e^{tx-\frac{1}{2}\sigma^2 t^2} = \sum^\infty_{k = 0} \frac{t^k}{k!} H_k (x;\sigma^2).\]

\begin{lemm} \label{gauss}
For $\xi_1 \sim N(0,\sigma_1^2)$ and $\xi_2 \sim N(0,\sigma_2^2)$, there holds
\[ \mathbb{E} [ H_k (\xi_1, \sigma_1^2) H_m (\xi_2, \sigma_2^2) ] = k! \delta_{km} \mathbb{E}[\xi_1 \xi_2]^k .\]
\end{lemm}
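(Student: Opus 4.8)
The plan is to derive the identity from the exponential generating function of the generalized Hermite polynomials, turning the claim into a comparison of Taylor coefficients of an explicit bivariate analytic function. Since the right-hand side only involves $\mathbb{E}[\xi_1\xi_2]$, we may and do assume that $(\xi_1,\xi_2)$ is a jointly Gaussian pair; this is the situation in all our applications, where the relevant variables are jointly Gaussian, conditionally on the subordinator $L$.

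First I would fix $s,t\in\mathbb{R}$ and multiply the two generating functions,
\[ e^{s\xi_1-\frac12\sigma_1^2 s^2}\, e^{t\xi_2-\frac12\sigma_2^2 t^2} \;=\; \sum_{k,m\ge 0}\frac{s^k t^m}{k!\,m!}\,H_k(\xi_1;\sigma_1^2)\,H_m(\xi_2;\sigma_2^2). \]
Taking expectations on the left via the moment generating function of a Gaussian vector, $\mathbb{E}[e^{s\xi_1+t\xi_2}]=\exp\!\big(\tfrac12\sigma_1^2 s^2+st\,\mathbb{E}[\xi_1\xi_2]+\tfrac12\sigma_2^2 t^2\big)$, the Gaussian quadratic factors cancel and one is left with
\[ \mathbb{E}\big[e^{s\xi_1-\frac12\sigma_1^2 s^2}\, e^{t\xi_2-\frac12\sigma_2^2 t^2}\big] \;=\; e^{st\,\mathbb{E}[\xi_1\xi_2]} \;=\; \sum_{k\ge 0}\frac{(st)^k}{k!}\,\mathbb{E}[\xi_1\xi_2]^k. \]
Expanding the expectation on the right-hand side of the first display term by term and matching the coefficient of $s^k t^m$ in the two power series gives $\tfrac{1}{k!\,m!}\mathbb{E}[H_k(\xi_1;\sigma_1^2)H_m(\xi_2;\sigma_2^2)] = \tfrac{1}{k!}\mathbb{E}[\xi_1\xi_2]^k$ when $k=m$ and $0$ otherwise, i.e.\ exactly the asserted formula.

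The only step needing care — and the one I expect to be the main (minor) obstacle — is the interchange of expectation and the double sum, since the summands are unbounded. This I would justify as follows: by Cauchy–Schwarz and the classical single-variable identity $\mathbb{E}[H_k(\xi;\sigma^2)^2]=k!\,\sigma^{2k}$,
\[ \sum_{k,m\ge 0}\frac{|s|^k|t|^m}{k!\,m!}\,\mathbb{E}\big[|H_k(\xi_1;\sigma_1^2)|\,|H_m(\xi_2;\sigma_2^2)|\big] \;\le\; \Big(\sum_{k\ge 0}\frac{(|s|\sigma_1)^k}{\sqrt{k!}}\Big)\Big(\sum_{m\ge 0}\frac{(|t|\sigma_2)^m}{\sqrt{m!}}\Big) \;<\;\infty \]
for every $s,t\in\mathbb{R}$, so the double series converges absolutely in $L^1(\Omega)$ and Fubini's theorem applies; the two power series in $(s,t)$ then agree on all of $\mathbb{R}^2$, hence coefficientwise. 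One can prove $\mathbb{E}[H_k(\xi;\sigma^2)^2]=k!\,\sigma^{2k}$ from scratch by the one-variable generating-function computation, so there is no circularity; alternatively one could bypass generating functions altogether by expanding $H_k$ and $H_m$ into monomials and invoking the Wick/Isserlis formula for joint Gaussian moments, but the route above seems cleanest.
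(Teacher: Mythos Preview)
Your proof is correct and is essentially the standard argument (indeed, the one in Nualart's book); the paper itself does not give a proof but simply cites \cite[Lemma 1.1.1]{nua}. You also do well to make explicit the implicit hypothesis that $(\xi_1,\xi_2)$ be jointly Gaussian, without which the identity can fail, and to note that this is satisfied in the applications (conditionally on the subordinator).
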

\begin{proof}
See \cite[Lemma 1.1.1]{nua}.
\end{proof}

\section{Subordinate cylindrical Brownian motion}
\subsection{Definition}
Let $(\Omega_1, \mathcal{F}_1, \mathbb{P}_1), (\Omega_2, \mathcal{F}_2, \mathbb{P}_2)$ be two probability spaces and consider the product space $(\Omega, \mathcal{F}, \mathbb{P}) \coloneqq (\Omega_1 \times \Omega_2, \mathcal{F}_1 \otimes \mathcal{F}_2, \mathbb{P}_1 \otimes \mathbb{P}_2)$. Let $W$ be a cylindrical Brownian motion on $L^2(\mathbb{T}^2)$ defined on the probability space $(\Omega_1,\mathcal{F}_1,\mathbb{P}_1)$, formally expressed by
\[ W(t) =  \frac{1}{2\pi} \sum_{l \in \mathbb{Z}^2} \beta^l (t) e_l \]
where $(\beta^l)_{l\in \mathbb{Z}^2}$ is an independent sequence of $\mathbb{C}$-valued standard Brownian motions conditioned with $\bar{\beta^l} = \beta^{-l}$. We normalize $\beta^l$ and assume $\mathrm{var}(\beta^l(t)) = t$. We also consider an $\mathbb{R}_+$-valued stochastic process $L$ which satisfies $L(0) = 0$ and has non-decreasing and c\`adl\`ag sample paths defined on the another probability space $(\Omega_2,\mathcal{F}_2,\mathbb{P}_2)$. By the construction, $(\beta^l)_{l \in \mathbb{Z}^2}$ and $L$ are mutually independent. 
Then, we define the subordinate cylindrical Brownian motion on $L^2 (\mathbb{T}^2)$ by 
\[ W_L (t) \coloneqq \frac{1}{2\pi} \sum_{l \in \mathbb{Z}^2} \beta^l_L (t) e_l \]
where we write $\beta^l_L(t)= \beta^l(L(t))$.
Typical example of a subordinator $L$ is a L\'evy subordinator i.e. a L\'evy process with non-decreasing sample paths such as Poisson processes. See \cite[Section 1.3.2]{levy} for more examples of L\'evy subordinators. In this case, $W_L$ is a $\mathcal{D}'(\mathbb{T}^2)$-valued L\'evy process.

\subsection{Stochastic integral}
In this subsection, we give precise definitions to the solutions of the linear stochastic heat and wave equations driven by subordinate cylindrical Brownian noises:
\begin{gather}
\begin{cases} \label{he}
\partial_t  \Phi - \Delta \Phi =  \partial_t{W_L} \\
\Phi (0) =  \phi_0, 
\end{cases} \\
\begin{cases} \label{wa}
\partial_t ^2 \Psi - \Delta \Psi = \partial_t{W_L} \\
(\Psi(0) , \partial_t \Psi(0) ) = (\psi_0, \psi_1).  
\end{cases}
\end{gather}

First, we consider the stochastic linear heat equation \eqref{he}. By the Duhamel principle, it is natural to define the solution $\Phi$ by
\begin{align*}
\Phi(t) &= e^{t\Delta}\phi_0 + \int^t_0 e^{(t-s)\Delta} dW_L (s) = e^{t\Delta}\phi_0 + \frac{1}{2\pi} \sum_{l \in \mathbb{Z}^2} \int^t_0 e^{(s-t)|l|^2} d\beta^l_L (s) e_l.
\end{align*}
So we need to define the (one-dimensional) stochastic integrals 
\begin{equation}\label{one}
\int^t_0 e^{(s-t)|l|^2} d\beta^l_L (s)
\end{equation}
and then prove the convergence of the infinite sum 
\begin{equation}\label{inf}
\frac{1}{2\pi} \sum_{l \in \mathbb{Z}^2} \int^t_0 e^{(s-t)|l|^2} d\beta^l_L (s) e_l.
\end{equation}
When $L$ is a L\'evy process, the integral $\int^t_0 e^{(s-t)|l|^2} d\beta^l_L (s)$ can be defined by the usual stochastic integral theory since $\{ \beta^l_L\}$ are semimartingales.
For general $L$, although $\{\beta^l_L\}$ are not always semimartingales, we can define the stochastic integral with respect to $\beta^l_L$ pathwisely by the Young integral thanks to the following lemma.
In the following, we use the notations in Section 6.1. Especially, $\|\cdot \|_{p,[0,T]}$ denotes the $p$-variation norm defined in \eqref{vvv} and $P[0,T]$ denotes the set of partitions of $[0,T]$.
\begin{lemm} \label{vari}
Let $B$ be a Brownian motion with continuous paths. Then, the sample paths of $B_L$ have the finite $(2+\epsilon)$-variation for any $\epsilon>0$.
Moreover, there holds
\[ \| B_L \|_{2+\epsilon, [0,T]} \le \left( \sup_{s,t \in [0,L(T)]} \frac{|B(t) - B(s)|}{|t-s|^\frac{1}{2+\epsilon}}\right) L(T)^{\frac{1}{2+\epsilon}} \]
for any $T>0$.
\end{lemm}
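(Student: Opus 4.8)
The plan is to reduce everything to the a.s.\ local Hölder continuity of Brownian motion together with the monotonicity of the time change $L$. Write $p = 2+\epsilon$ and $B_L(t) = B(L(t))$, and recall that $\|f\|_{p,[0,T]}$ is the supremum of $\bigl(\sum_i |f(t_{i+1})-f(t_i)|^p\bigr)^{1/p}$ over all partitions $0=t_0<t_1<\dots<t_n=T$ in $P[0,T]$. Since $p>2$ we have $1/p<1/2$, so the classical continuity criterion of Kolmogorov gives that $B$ has a.s.\ locally $1/p$-Hölder continuous paths; as $L(T)<\infty$ a.s.\ (because $L$ is $\mathbb{R}_+$-valued), the random constant
\[ K \coloneqq \sup_{s,t\in[0,L(T)]} \frac{|B(t)-B(s)|}{|t-s|^{1/p}} \]
is a.s.\ finite. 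The independence of $B$ and $L$ is used only here: conditionally on $L$, one is simply looking at a fixed Brownian path restricted to the fixed compact interval $[0,L(T)]$.

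Then I would estimate a single partition sum. Let $0=t_0<\dots<t_n=T$ be a partition of $[0,T]$. Since $L$ is non-decreasing each $L(t_i)$ lies in $[0,L(T)]$, so by the definition of $K$,
\[ |B_L(t_{i+1})-B_L(t_i)| = |B(L(t_{i+1}))-B(L(t_i))| \le K\,|L(t_{i+1})-L(t_i)|^{1/p}. \]
Raising to the power $p$, summing, and using once more that $L$ is non-decreasing with $L(0)=0$ so that the increments telescope,
\[ \sum_i |B_L(t_{i+1})-B_L(t_i)|^p \le K^p \sum_i \bigl(L(t_{i+1})-L(t_i)\bigr) = K^p\,L(T). \]
Taking the $p$-th root and then the supremum over all partitions gives $\|B_L\|_{p,[0,T]} \le K\,L(T)^{1/p}$, which is precisely the claimed bound; since the right-hand side is a.s.\ finite, this also yields the finite $(2+\epsilon)$-variation.

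There is no deep obstacle here; the one point worth flagging is that $L$ may have jumps, so the increments $L(t_{i+1})-L(t_i)$ above need not be small. This is harmless precisely because the Hölder bound on $B$ is \emph{uniform} over the whole range $[0,L(T)]$ of the time change: the size of an individual jump of $L$ is simply carried along inside the factor $|L(t_{i+1})-L(t_i)|^{1/p}$, and monotonicity of $L$ then collapses the sum of these increments to $L(T)$. So the estimate is entirely pathwise once $K$ is known to be a.s.\ finite, and it does not require $B_L$ to be a semimartingale.
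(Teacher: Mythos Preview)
Your proof is correct and follows essentially the same route as the paper: both use the $\frac{1}{2+\epsilon}$-H\"older continuity of $B$ on the compact interval $[0,L(T)]$ to bound each increment $|B_L(t_i)-B_L(t_{i-1})|^{2+\epsilon}$ by $K^{2+\epsilon}(L(t_i)-L(t_{i-1}))$, and then telescope using the monotonicity of $L$. The only cosmetic difference is that the paper writes the bound by first factoring out the supremum $\sup_{s,t}\frac{|B_L(t)-B_L(s)|^{2+\epsilon}}{|L(t)-L(s)|}$ and then identifying it with your $K^{2+\epsilon}$.
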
 

\begin{proof}
We can calculate that
\begin{align*}
\| B_L \|_{2+\epsilon, [0,T]}^{2+\epsilon} &= \sup_{D\in P[0,T]} \sum^N_{i=1} \left| B_L(t_i) - B_L(t_{i-1}) \right|^{2+\epsilon} \\
&\le \left( \sup_{s,t \in [0,T]} \frac{|B_L(t) - B_L(s)|^{2+\epsilon}}{|L(t)-L(s)|}\right) \sup_{D\in P[0,T]} \sum^N_{i=1} |L(t_i) -L(t_{i-1})| \\
&\le \left( \sup_{s,t \in [0,L(T)]} \frac{|B(t) - B(s)|^{2+\epsilon}}{|t-s|}\right) L(T) < \infty
\end{align*}
where we use the $\frac{1}{2+\epsilon}$-H\"{o}lder continuity of the sample paths of the Brownian motion $B$.
\end{proof}

Because $e^{(\cdot - t)|l|^2}$ has the bounded variation for each $l \in \mathbb{Z}^2$ and there holds $\frac{1}{2+\epsilon} + \frac{1}{1} > 1$, the integral \eqref{one} is well-defined as a Young integral by Lemma\ \ref{vari}. As a consequent, the finite sum 
$\frac{1}{2\pi} \sum_{|l| \le N} \int^t_0 e^{(s-t)|l|^2}d\beta^l_L (t) e_l $
is well-defined for any $N\in \mathbb{Z}_{>0}$. Then, we can prove the convergence of 
$\left( \frac{1}{2\pi} \sum_{|l| \le N} \int^t_0 e^{(s-t)|l|^2} d\beta^l_L (t) e_l \right)_{N\in \mathbb{Z}_{>0}}$
as $N \rightarrow \infty$ and we define the infinite sum \eqref{inf} by this limit, see Section 4 below.

Next, we consider the stochastic linear wave equation \eqref{wa}. We define the solution of \eqref{wa} by the stochastic convolution
\begin{align*}
\Psi (t) &= \cos(t|\nabla|) \psi_0 + \frac{\sin(t|\nabla|)}{|\nabla|} \psi_1 + \int^t_0 \frac{\sin((t - s)|\nabla|)}{|\nabla|} dW_L(s)  \\
&:= \cos(t|\nabla|) \psi_0 + \frac{\sin(t|\nabla|)}{|\nabla|} \psi_1 + \frac{1}{2\pi} \sum_{l \in \mathbb{Z}^2} \int^t_0 \frac{\sin((t - s)|l|)}{|l|} d\beta^l_{L}(s) e_l 
\end{align*}
where we interpret as $\frac{\sin ((t-s)|0|)}{|0|}= t-s$.
Similarly to the case of the heat equation, this expression turns out to be well-defined. Indeed, we can define $\frac{1}{2\pi} \sum_{|l|\le N} \int^t_0 \frac{\sin((t - s)|l|)}{|l|} d\beta^l_{L}(s) e_l$ for any $N \in \mathbb{Z}_{>0}$ thanks to Lemma \ref{vari}, and we can show the convergence as $N \rightarrow \infty$.

\section{Renormalization}
\subsection{Lemmas}

In this subsection, we give some lemmas which we need in the proof of the convergence of the renormalized powers 
\begin{equation}
\Phi_N^{\Diamond k}(t) = H_k (\Phi_N(t), c_N^H) 
\end{equation}
\begin{equation} \label{qq}
\Psi_N^{\Diamond k}(t) = H_k (\Psi_N(t), c_N^W) 
\end{equation}
where 
\[ \Phi_N (t) = \frac{1}{2\pi} \sum_{|l| \le N} \int^t_0 e^{(s-t)|l|^2} d\beta^l_L (s) e_l \]
and
\[ \Psi_N (t) = \frac{1}{2\pi} \sum_{|l| \le N} \int^t_0  \frac{\sin((t - s)|l|)}{|l|} d\beta^l_L (s) e_l \]
are the solutions of the regularized linear equations
\begin{gather}
\begin{cases} \label{rehe}
\partial_t  \Phi_N - \Delta \Phi_N =  P_N\partial_t{W_L} \\
\Phi_N (0) =  0 
\end{cases} \\
\begin{cases} \label{rehe2}
\partial_t ^2 \Psi_N - \Delta \Psi_N = P_N\partial_t{W_L} \\
(\Psi_N(0) , \partial_t \Psi_N(0) ) = (0, 0)  
\end{cases}
\end{gather}
respectively and $c_N^H(t), c_N^W(t)$ are given by
\[c^H_N (t) = \frac{1}{2\pi} \sum_{|l| \le N} \int^t_0 e^{2(s-t)|l|^2} dL (s), \ \ \ c^W_N (t) = \frac{1}{2\pi} \sum_{|l| \le N} \int^t_0  \frac{\sin^2((t - s)|l|)}{|l|^2} dL (s). \]

In the following, we write the expectations with respect to the probability measures $\mathbb{P}_i, i = 1, 2$ by
\[ \mathbb{E}^{\mathbb{P}_i} [ X ] = \int_{\Omega_i} X(\omega) d\omega .\]
First we prove the following general lemma. 
\begin{lemm}\label{lemm1}
Let $f, g \in V^{2-\epsilon}[0,T] \cap C[0,T]$ for some $\epsilon > 0$ and $T>0$, where  the spaces $V^p$ are defined in \eqref{revise}. Then, for any fixed $\omega_2 \in \Omega_2$, $\int^T_0 f(t) d\beta^l_L (t)$ is a Gaussian random variable on $\Omega_1$ and there holds
\[ \mathbb{E}^{\mathbb{P}_1} \left[ \int^T_0 f(t) d\beta^l_L (t) \int^T_0 g(t) d\beta^{l'}_L (t) \right] 
= \delta_{l, -l'} \int^T_0 f(t)g(t) dL(t) .\]
\end{lemm}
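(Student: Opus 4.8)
The plan is to fix $\omega_2 \in \Omega_2$ (so the path $t \mapsto L(t,\omega_2)$ is a fixed nondecreasing c\`adl\`ag function) and work entirely on $(\Omega_1, \mathcal{F}_1, \mathbb{P}_1)$, where $\beta^l = \beta^l(\cdot)$ is a standard (complex) Brownian motion. By Lemma~\ref{vari}, the path $\beta^l_L = \beta^l \circ L$ has finite $(2+\epsilon')$-variation for every $\epsilon'>0$; choosing $\epsilon' < \epsilon$ small enough that $\frac{1}{2-\epsilon} + \frac{1}{2+\epsilon'} > 1$, the Young integral $\int_0^T f(t)\, d\beta^l_L(t)$ is well-defined for $f \in V^{2-\epsilon}[0,T]\cap C[0,T]$, and likewise for $g$ against $\beta^{l'}_L$. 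This is the setup; everything else is an identification of this pathwise Young integral with a classical It\^o/Wiener integral.

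The key step is the time-change identity for the Young integral: since $L$ is nondecreasing and c\`adl\`ag, I claim
\[ \int_0^T f(t)\, d\beta^l_L(t) = \int_0^{L(T)} f(L^{-1}(r))\, d\beta^l(r), \]
where $L^{-1}$ is the right-continuous generalized inverse, interpreted appropriately at the (countably many) jump times of $L$. The cleanest way to prove this is by approximation: take a sequence of partitions $D_n$ of $[0,T]$ with mesh tending to $0$; the Young Riemann sums $\sum_i f(t_{i-1})(\beta^l_L(t_i) - \beta^l_L(t_{i-1})) = \sum_i f(t_{i-1})(\beta^l(L(t_i)) - \beta^l(L(t_{i-1})))$ are exactly Riemann sums (over the image partition $\{L(t_i)\}$, which is a partition of $[0,L(T)]$ once one includes the jump gaps) for the right-hand integral, with integrand the left-continuous step function $r \mapsto f(t_{i-1})$ on $[L(t_{i-1}), L(t_i))$. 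Young's continuity theorem gives convergence on the left; on the right, since $\beta^l$ has a.s. finite $(2+\epsilon')$-variation on $[0,L(T)]$ and $r \mapsto f(L^{-1}(r))$ has finite $(2-\epsilon)$-variation there, the same Young estimate gives convergence to $\int_0^{L(T)} f(L^{-1}(r))\,d\beta^l(r)$. (One must check that $f\circ L^{-1}$ really lies in the right variation class and that jump intervals of $L$, on which the integrand is constant, contribute $f(t_{i-1})(\beta^l(L(t_i)) - \beta^l(L(t_i^-)))$ consistently — this bookkeeping at jumps is where care is needed.)

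Once the time-change identity is in hand, the conclusion is standard Wiener-integral theory on $(\Omega_1,\mathbb{P}_1)$: for the fixed deterministic (in $\omega_1$) continuous function $L(\cdot,\omega_2)$, the right-hand side $\int_0^{L(T)} f(L^{-1}(r))\, d\beta^l(r)$ is a Wiener integral of a deterministic $L^2[0,L(T)]$ integrand against Brownian motion, hence a centered Gaussian random variable; this gives the Gaussianity claim. For the covariance, using the independence structure $\bar{\beta^l} = \beta^{-l}$ and $\mathbb{E}^{\mathbb{P}_1}[\beta^l(t)\overline{\beta^{l'}(s)}]$-type relations (equivalently $d\langle \beta^l, \beta^{l'}\rangle_r = \delta_{l,-l'}\, dr$), the It\^o isometry yields
\[ \mathbb{E}^{\mathbb{P}_1}\!\left[\int_0^{L(T)} f(L^{-1}(r))\, d\beta^l(r)\int_0^{L(T)} g(L^{-1}(r))\, d\beta^{l'}(r)\right] = \delta_{l,-l'}\int_0^{L(T)} f(L^{-1}(r))g(L^{-1}(r))\, dr. \]
Finally, change variables back via $r = L(t)$ (Lebesgue--Stieltjes): $\int_0^{L(T)} (f g)(L^{-1}(r))\, dr = \int_0^T f(t) g(t)\, dL(t)$, which is the asserted formula. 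The main obstacle is the first half — rigorously establishing the time-change formula for Young integrals when $L$ has jumps, in particular handling the integrand's behavior across jump intervals and verifying that $f \circ L^{-1}$ sits in a $p$-variation class compatible with Young integration against $\beta^l$; the Gaussianity and covariance computation afterward is routine.
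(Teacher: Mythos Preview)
Your approach is correct in outline but takes a genuinely different route from the paper. The paper never introduces the time change $r = L(t)$ at all: instead it works directly with the Riemann sums $I(f,\beta^l_L,D) = \sum_i f(t_{i-1})(\beta^l_L(t_i)-\beta^l_L(t_{i-1}))$. Each such sum is manifestly Gaussian in $\omega_1$, and the product $I(f,\beta^l_L,D)\,I(g,\beta^{l'}_L,D)$ has $\mathbb{P}_1$-expectation computed by one line using independent stationary increments of $\beta^l$, giving $\delta_{l,-l'}\sum_i f(t_{i-1})g(t_{i-1})(L(t_i)-L(t_{i-1}))$. This is bounded by $\|f\|_{C}\|g\|_{C}L(T)$ uniformly in $D$, which (via Gaussianity) forces uniform integrability of the products; hence one may pass to the limit $|D|\to 0$ under the expectation, obtaining $\delta_{l,-l'}\int_0^T f g\, dL$ directly.

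Your route, by contrast, first establishes the identity $\int_0^T f\, d\beta^l_L = \int_0^{L(T)} (f\circ L^{-1})\, d\beta^l$ and then invokes the Wiener/It\^o isometry followed by the change of variables $r=L(t)$. This is conceptually appealing---it explains \emph{why} the answer looks like an It\^o isometry---but, as you note, the time-change step carries real technical overhead: the image partition $\{L(t_i)\}$ does not have mesh tending to zero when $L$ jumps, so convergence on the right-hand side cannot come from the Young estimate alone and must use that $f\circ L^{-1}$ is constant on each jump interval of $L$; one must also check that $f\circ L^{-1}$ (which need not be continuous if $L$ has plateaus) has no common discontinuities with $\beta^l$ and lies in $V^{2-\epsilon}$. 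The paper's argument sidesteps all of this bookkeeping at the price of a slightly less illuminating computation; your argument gives more structural insight at the cost of that bookkeeping.
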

\begin{proof}
Fix $\omega_2 \in \Omega_2$. We use the notions in Section 6.1. For $D\in P[0,T]$, $I(f,\beta^l_L,D)$ is a Gaussian random variable and by Lemma \ref{young} and Lemma \ref{vari}, it converges to $\int^T_0 f(t) d\beta^l_L (t)$ as $|D|\rightarrow 0$ for any $\omega_1\in\Omega_1$. Therefore, $\int^T_0 f(t) d\beta^l_L (t)$ is also a Gaussian random variable on $\Omega_1$.
Moreover, we can calculate that
\begin{align}
&\mathbb{E}^{\mathbb{P}_1} \left[ I\left(f,\beta^l_L,D \right) I\left(g,\beta^{l'}_L,D \right) \right] \label{tala} \\
&= \mathbb{E}^{\mathbb{P}_1} \left[ \left( \sum^N_{i=1} f(t_{i-1}) \left( \beta^l_L(t_i) - \beta^l_L(t_{i-1}) \right) \right) \left( \sum^N_{i=1} g(t_{i-1}) \left( \beta^{l'}_L(t_i) - \beta^{l'}_L(t_{i-1}) \right) \right) \right] \notag \\
&= \delta_{l,-l'} \sum^N_{i=1} f(t_{i-1}) g(t_{i-1}) \mathbb{E}^{\mathbb{P}_1} \left[ \left| \beta^{l}_L(t_i) - \beta^{l}_L(t_{i-1}) \right|^2 \right]  \notag \\
&= \delta_{l,-l'} \sum^N_{i=1} f(t_{i-1}) g(t_{i-1})   \left( L(t_i) - L(t_{i-1}) \right) \notag
\end{align}
where we use the fact that Brownian motions have the independent and stationary increments. The last line of \eqref{tala} can be dominated by
$ \|f\|_{C[0,T]}\|g\|_{C[0,T]}L(T) $
uniformly in $D\in P[0,T]$. Because $I(f,\beta^l_L,D)$ and $I(g,\beta^l_L,D)$ are Gaussian random variables, this fact yields
\[ \sup_{D\in P[0,T]} \mathbb{E}^{\mathbb{P}_1} \left[ \left| I\left(f,\beta^l_L,D \right) I\left(g,\beta^{l'}_L,D \right) \right|^p \right] < + \infty \]
for any $p\ge1$. In particular, $\{ I\left(f,\beta^l_L,D \right) I(g,\beta^{l'}_L,D ) \}_{D\in P[0,T]}$ is uniformly integrable. Thus, by taking the limit $|D| \rightarrow 0$ in \eqref{tala}, we obtain the desired result.
\end{proof}
By Lemma \ref{lemm1}, we can check that
\[ c_N^H (t) = \mathbb{E}^{\mathbb{P}_1} [ \Phi_N (t)^2 ] \]
and
\[ c_N^W (t) = \mathbb{E}^{\mathbb{P}_1} [ \Psi_N (t)^2 ] .\]

Next, we give a formula for the $\mathbb{P}_1$-expectations of $\Phi_N^{\Diamond k}$ and $\Psi_N^{\Diamond k}$.
\begin{lemm} \label{lemm2}
For any $s,t \in \mathbb{R}_+$, $k,m \in \mathbb{N}$ and $x,y \in \mathbb{T}^2$, there holds
\[ \mathbb{E}^{\mathbb{P}_1} \left[ \Phi_N^{\Diamond k} (s, x) \Phi_N^{\Diamond m} (t, y)  \right] = \delta_{k, m} k! \mathbb{E}^{\mathbb{P}_1} \left[ \Phi_{N} (s, x) \Phi_{N} (t, y) \right]^k \] and
\[ \mathbb{E}^{\mathbb{P}_1} \left[ \Psi_N^{\Diamond k} (s, x) \Psi_N^{\Diamond m} (t, y)  \right] = \delta_{k, m} k! \mathbb{E}^{\mathbb{P}_1} \left[ \Psi_{N} (s, x) \Psi_{N} (t, y) \right]^k .\]
\end{lemm}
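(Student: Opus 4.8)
The plan is to fix the subordinator path $\omega_2\in\Omega_2$ once and for all and to reduce the statement to Lemma \ref{gauss}. Both sides of the two asserted identities are functions of $\omega_2$ (through the $L$-measurable constants $c^H_N,c^W_N$ and, on the right, through the covariances), so it suffices to prove them for each fixed $\omega_2$. With $\omega_2$ frozen, $\Phi_N^{\Diamond k}(s,x)=H_k(\Phi_N(s,x);c^H_N(s))$ and $\Phi_N^{\Diamond m}(t,y)=H_m(\Phi_N(t,y);c^H_N(t))$, so the first identity is precisely the conclusion of Lemma \ref{gauss} applied with $\xi_1=\Phi_N(s,x)$, $\sigma_1^2=c^H_N(s)$, $\xi_2=\Phi_N(t,y)$, $\sigma_2^2=c^H_N(t)$, \emph{provided} one knows that $(\Phi_N(s,x),\Phi_N(t,y))$ is a centered jointly Gaussian vector on $\Omega_1$ with those two variances. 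The wave identity follows in the same way with $c^W_N$ and $\Psi_N$ in place of $c^H_N$ and $\Phi_N$.

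It thus remains to verify the joint Gaussianity and identify the variances. The variances require nothing new: the equalities $c^H_N(r)=\mathbb{E}^{\mathbb{P}_1}[\Phi_N(r)^2]$ and $c^W_N(r)=\mathbb{E}^{\mathbb{P}_1}[\Psi_N(r)^2]$ were already recorded after Lemma \ref{lemm1} as a consequence of that lemma (and are independent of the spatial variable because $e_l(x)e_{-l}(x)=1$). For the Gaussianity, I would argue as follows. Each of $\Phi_N(s,x)$ and $\Phi_N(t,y)$ is a finite linear combination, over $l\in\mathbb{Z}^2$ with $|l|\le N$, of one-dimensional Young integrals $\int_0^r e^{(u-r)|l|^2}d\beta^l_L(u)$, and, by the argument in the proof of Lemma \ref{lemm1} (via Lemma \ref{vari}), for the frozen $\omega_2$ each such integral is the almost sure limit along a refining sequence of partitions of the Riemann--Stieltjes sums $I(e^{(\cdot-r)|l|^2},\beta^l_L,D)$. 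Now for fixed $\omega_2$ the family $\{\beta^l_L(u)=\beta^l(L(u)):l\in\mathbb{Z}^2,\ u\ge 0\}$ is a centered Gaussian family on $\Omega_1$, being the evaluation of the centered Gaussian family $\{\beta^l(v)\}$ at the \emph{deterministic} times $v=L(u)$; hence every finite linear combination of increments $\beta^l_L(u_i)-\beta^l_L(u_{i-1})$ is centered Gaussian, and so all the Riemann--Stieltjes sums approximating $\Phi_N(s,x)$ and $\Phi_N(t,y)$ are jointly centered Gaussian. Since joint Gaussianity is preserved under almost sure limits, $(\Phi_N(s,x),\Phi_N(t,y))$ is a centered Gaussian vector, and likewise $(\Psi_N(s,x),\Psi_N(t,y))$. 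Feeding this into Lemma \ref{gauss} yields
\[ \mathbb{E}^{\mathbb{P}_1}\!\left[\Phi_N^{\Diamond k}(s,x)\,\Phi_N^{\Diamond m}(t,y)\right]=\delta_{k,m}\,k!\,\mathbb{E}^{\mathbb{P}_1}\!\left[\Phi_N(s,x)\,\Phi_N(t,y)\right]^k \]
and the corresponding identity for $\Psi_N$.

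I expect the only genuinely non-routine point to be the justification of joint Gaussianity, which is mildly delicate precisely because the processes $\beta^l_L$ need not be semimartingales and the stochastic integrals are defined pathwise as Young integrals rather than as It\^o integrals; the Riemann-sum approximation supplied by the proof of Lemma \ref{lemm1} is exactly what circumvents this. One should additionally dispose of the degenerate case $c^H_N(r)=0$ (which happens exactly when $L\equiv0$ on $[0,r]$): there $\Phi_N(r)=0$ $\mathbb{P}_1$-a.s., so $\Phi_N^{\Diamond k}(r)=H_k(0;0)=\delta_{k,0}$ and both sides of the identity reduce to their obvious values; the same remark applies to $\Psi_N$, so the two formulas hold in all cases.
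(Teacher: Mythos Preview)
Your proposal is correct and follows essentially the same route as the paper: fix $\omega_2$, observe that $\Phi_N$ and $\Psi_N$ are centered Gaussian with the right variances, and invoke Lemma~\ref{gauss}. You supply more detail than the paper does---in particular the justification of \emph{joint} Gaussianity via Riemann--Stieltjes approximation and the disposal of the degenerate case $c_N=0$---but the underlying argument is the same.
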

\begin{proof}
By the definitions, $\Phi_N$ and $\Psi_N$ are $\mathbb{R}$-valued centered Gaussian random variables with respect to $\omega_1 \in \Omega_1$ for any $(s,x)\in \mathbb{R}_+ \times \mathbb{T}^2$ and fixed $\omega_2 \in \Omega_2$.
 Moreover, there holds $c_N^H (t) = \mathbb{E}^{\mathbb{P}_1} [ \Phi_N (t,x)^2 ]$
and $c_N^W (t) = \mathbb{E}^{\mathbb{P}_1} [ \Psi_N (t,x)^2 ]$ as explained above. Therefore, by Lemma \ref{gauss}, the statement follows immediately.
\end{proof}

\subsection{Convergence of the renormalized powers}
By the formulas which we derived above, we can show the convergence of $\Phi_N^{\Diamond k}$ and $\Psi_N^{\Diamond k}$ as $N \rightarrow \infty$.

First, we consider $\Phi_N^{\Diamond k}$, the renormalized powers of the solution of the regularized stochastic linear heat equation \eqref{rehe}.
Note that we cannot expect time-continuity for $\Phi^{\Diamond k}$ since we are dealing with jump-type noises. However, we can prove the convergence on $L^p$-spaces with respect to time variable $t$.
\begin{prop} \label{rere}
Fix $T > 0$ and $k \in \mathbb{Z}_{>0}$. 
For any $0<\epsilon<\frac{1}{k},\ \alpha<-\epsilon k,\ 0<\gamma < \frac{2}{(1-\epsilon)k}$, $p\ge 1$, and fixed $\omega_2 \in \Omega_2$, $\Phi_N^{ \Diamond k} (\omega_1, \omega_2)$ converges in $L^p \left( \Omega_1; L^\gamma ([0,T];B^\alpha_{\infty, \infty}(\mathbb{T}^2)) \right)$ and $\mathbb{P}_1$-almost surely.
In particular, $\Phi_N^{\Diamond k}$ converges in $L^\gamma ([0,T];B^\alpha_{\infty, \infty}(\mathbb{T}^2))$ $\mathbb{P}$-almost surely.
\end{prop}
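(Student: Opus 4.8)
The plan is to establish the convergence of $\Phi_N^{\Diamond k}$ by a Kolmogorov-type continuity/compactness argument in the time variable combined with the standard Nelson-type moment bounds coming from Gaussian hypercontractivity (which is legitimate here because, for fixed $\omega_2$, $\Phi_N(t,x)$ is a genuine centered Gaussian random variable on $\Omega_1$). The key quantity throughout is the covariance
\[
K_N(s,x;t,y) \coloneqq \mathbb{E}^{\mathbb{P}_1}\!\left[\Phi_N(s,x)\Phi_N(t,y)\right]
= \frac{1}{(2\pi)^2}\sum_{|l|\le N} e_l(x-y)\int_0^{s\wedge t} e^{(s-r)|l|^2}e^{(t-r)|l|^2}\,dL(r),
\]
obtained by applying Lemma \ref{lemm1} with $f(r)=\mathbf 1_{[0,s]}(r)e^{(s-r)|l|^2}$ and $g(r)=\mathbf 1_{[0,t]}(r)e^{(t-r)|l|^2}$. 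By Lemma \ref{lemm2} we have $\mathbb{E}^{\mathbb{P}_1}[\Phi_N^{\Diamond k}(s,x)\Phi_M^{\Diamond k}(t,y)] = k!\,\mathbb{E}^{\mathbb{P}_1}[\Phi_N(s,x)\Phi_M(t,y)]^k$, so everything reduces to analysing $K_N$ and its differences.

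First I would reduce the $B^\alpha_{\infty,\infty}$-estimate to a pointwise-in-$x$ moment estimate via the standard Besov-space trick: for $\alpha<-\epsilon k$ pick $p$ large (depending on how much room there is between $\alpha$ and $-\epsilon k$) and use $\|g\|_{B^\alpha_{\infty,\infty}}\lesssim \|g\|_{B^{\alpha+2/p'}_{p,p}}$ together with the equivalence of Gaussian moments to bound $\mathbb{E}^{\mathbb{P}_1}\|\Delta_m \Phi_N^{\Diamond k}(t)\|_{L^p(\mathbb{T}^2)}^p$ by $\big(\sum_{|l|\simeq 2^m}\cdots\big)$-type sums. The heat-kernel factor forces the bound, for fixed $\omega_2$, to involve $\int_0^t e^{2(t-r)|l|^2}dL(r)\le L(t)$ but more importantly the summability in $l$: one gets $\mathbb{E}^{\mathbb{P}_1}[|\Delta_m\Phi_N^{\Diamond k}(t,x)|^2]\lesssim \big(\sum_{|l|\simeq 2^m}\int_0^t e^{2(t-r)|l|^2}dL(r)\big)^k$. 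The crucial point is that this sum is \emph{not} bounded uniformly in $t$ down to $t=0$ — near $t=0$ it can blow up — which is exactly why only $L^\gamma$-in-time control (rather than $L^\infty$) is available; integrating $t\mapsto \big(\sum_{|l|\simeq 2^m}\int_0^t e^{2(t-r)|l|^2}dL(r)\big)^{k\gamma/2}$ against $dt$ and using $\int_0^T (\int_0^t e^{-2(t-r)c}dr)^{\beta}dt<\infty$ for small $\beta$ is where the constraint $\gamma<\frac{2}{(1-\epsilon)k}$ enters: the factor $e^{2(t-r)|l|^2}$ integrated in $r$ gives $\simeq |l|^{-2}$ but only at the cost of a short-time singularity of order $t^{\epsilon}$-ish whose $\gamma$-integrability dictates the threshold. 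Combining the spatial sum over dyadic blocks (convergent since $\alpha<-\epsilon k$) with this temporal integrability gives $\sup_N \mathbb{E}^{\mathbb{P}_1}\big[\|\Phi_N^{\Diamond k}\|_{L^\gamma([0,T];B^\alpha_{\infty,\infty})}^p\big]<\infty$ for every $p\ge 1$ (first for large $p$, then for all $p$ by monotonicity of $L^p(\Omega_1)$-norms).

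Next I would prove the Cauchy property: for $N\ge M$, write $\Phi_N^{\Diamond k}-\Phi_M^{\Diamond k}$ and expand $\mathbb{E}^{\mathbb{P}_1}[|\Phi_N^{\Diamond k}(t,x)-\Phi_M^{\Diamond k}(t,x)|^2] = k!\big(K_N(t,x;t,x)^k - 2K_N(t,x;t,x)^{?}\cdots\big)$; more cleanly, use the polarization $\mathbb{E}^{\mathbb{P}_1}[(\Phi_N^{\Diamond k}-\Phi_M^{\Diamond k})^2]=k!\big(\langle\Phi_N,\Phi_N\rangle^k-2\langle\Phi_N,\Phi_M\rangle^k+\langle\Phi_M,\Phi_M\rangle^k\big)$ together with $a^k-b^k=(a-b)\sum a^i b^{k-1-i}$ to extract a factor $\sum_{M<|l|\le N}\int_0^t e^{2(t-r)|l|^2}dL(r)$, whose dyadic-block contribution is $o(1)$ as $M\to\infty$ by the same summability used above. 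This yields a Cauchy sequence in $L^p(\Omega_1;L^\gamma([0,T];B^\alpha_{\infty,\infty}))$, hence an $L^p$-limit $\Phi^{\Diamond k}$; for the $\mathbb{P}_1$-a.s. convergence I would pass to a fast subsequence $N_j$ (e.g. $N_j=2^j$) along which $\sum_j \|\Phi_{N_{j+1}}^{\Diamond k}-\Phi_{N_j}^{\Diamond k}\|<\infty$ a.s. by Borel–Cantelli, then upgrade to the full sequence using monotonicity of the truncation together with the uniform moment bound. Finally, the last sentence of the statement — $\mathbb{P}$-a.s.\ convergence — follows from Fubini: the set of $(\omega_1,\omega_2)$ along which convergence holds has full $\mathbb{P}_1$-measure for every $\omega_2$, hence full $\mathbb{P}$-measure.

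The main obstacle is the sharp bookkeeping of the short-time behaviour: unlike the space-time white noise case where $c_N(t)\simeq \log N$ uniformly on $[0,T]$, here the renormalizing constant and the Besov norm of $\Phi_N^{\Diamond k}(t)$ degenerate as $t\downarrow 0$, so one must track the precise power of $t$ (equivalently, the precise Hölder exponent $1-\epsilon$ available in the heat-semigroup smoothing against the randomized measure $dL$) and verify that it is $\gamma$-integrable exactly when $\gamma<\frac{2}{(1-\epsilon)k}$. Getting the exponent arithmetic right — balancing the dyadic gain $2^{-2mk}$ against the loss $2^{2m(1-\epsilon)k}$ coming from $\int_0^t e^{2(t-r)|l|^2}dL(r)\lesssim |l|^{-2}\wedge L(t)$ and the block count, while keeping the temporal integral finite — is the technical heart of the proof; the rest is the routine Gaussian/Kolmogorov machinery.
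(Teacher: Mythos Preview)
Your overall architecture---fix $\omega_2$, exploit Gaussianity on $\Omega_1$, reduce via hypercontractivity and the Besov embedding (Lemma~\ref{emb}) to second-moment computations governed by Lemma~\ref{lemm2}, then prove a Cauchy estimate and conclude by Borel--Cantelli---matches the paper. However, two points are genuinely wrong and would block the argument as written.

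First, the claimed bound $\int_0^t e^{-2(t-r)|l|^2}\,dL(r) \lesssim |l|^{-2}\wedge L(t)$ is \emph{false} for general subordinators. It holds when $L$ has a bounded density, but if $L$ has a jump of size $a$ at some $s_0<t$, the integral contains the contribution $a\,e^{-2(t-s_0)|l|^2}$, which for $t$ just above $s_0$ is of order $a$ uniformly in $l$. This is precisely the novelty of the subordinated setting over the white-noise case. The paper's remedy is the \emph{pointwise} decoupling
\[
e^{-2(t-r)|l|^2} \;\lesssim\; \frac{1}{|l|^{2-2\epsilon}}\cdot\frac{1}{(t-r)^{1-\epsilon}},
\]
valid because $\sup_{x>0}x^{2-2\epsilon}e^{-2x^2}<\infty$. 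This separates the $l$-dependence (yielding spatial summability exactly under $\alpha<-\epsilon k$) from the $\omega_2$-dependent time factor $\int_0^t (t-r)^{-(1-\epsilon)}\,dL(r)$; the threshold $\gamma<\frac{2}{(1-\epsilon)k}$ is exactly what makes $t\mapsto\bigl(\int_0^t (t-r)^{-(1-\epsilon)}dL(r)\bigr)^{k\gamma/2}$ integrable on $[0,T]$ after Fubini against $dL$. Without this decoupling you cannot simultaneously get the frequency decay and the time integrability.

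Second, and relatedly, the time singularity is \emph{not} near $t=0$ (where $\Phi_N(0)=0$ and $c_N^H(0)=0$) but at the jump times of $L$: just after a jump at $s_0$ the Besov norm of $\Phi_N^{\Diamond k}(t)$ behaves like $(t-s_0)^{-(1-\epsilon)k/2}$. Consequently your opening reference to a ``Kolmogorov-type continuity argument in the time variable'' is misplaced---no temporal continuity holds here (contrast Proposition~\ref{yy} for the wave case), and the paper bounds $\int_0^T \mathbb{E}^{\mathbb{P}_1}\bigl[\|\cdot\|_{B^\alpha_{2p,2p}}^{2p}\bigr]^{\gamma/(2p)}dt$ directly, with no increment estimate in $t$. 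Finally, your plan to upgrade almost-sure convergence from a subsequence via ``monotonicity of the truncation'' is unclear, since $N\mapsto \Phi_N^{\Diamond k}$ is not monotone; the paper instead extracts a quantitative rate $N^{-\delta}$ from the Cauchy estimate and applies Chebyshev plus Borel--Cantelli directly over all $N$.
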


\begin{rem}
When $0<\gamma <1$, $L^\gamma$ is not a norm space, however, it is a complete metric space.
\end{rem}

\begin{rem}
One can show that $\Phi_N$ converges to some time-discontinuous process $\Phi$ as $N\rightarrow\infty$ when $L$ has jumps, see Section 6.2.
\end{rem}

\begin{rem}
When $L(\cdot) \in C^1 (\mathbb{R}_+) $ a.s., it is straightforward to prove that $\Phi_N^{\Diamond k}$ converges in the space $ C([0,T];B^{-\epsilon}_{\infty,\infty} (\mathbb{T}^2))$ for any $\epsilon>0$. For example, it is well-known that $\Phi_N^{\Diamond k}$ converges in $C([0,T];B^{-\epsilon}_{\infty,\infty} (\mathbb{T}^2))$ if $L(t) = t$.
\end{rem}

\begin{proof}
Fix $\omega_2 \in \Omega_2$ and let $1 \le p < \infty$. Note that $\Phi_N (\omega_1,\omega_2)$ is a Gaussian random variable with respect to $\omega_1 \in \Omega_1$.
For $M> N \ge1$,
\begin{align*}
\left\| \int^T_0  \| \Phi^{\Diamond k}_{N}(t) - \Phi^{\Diamond k}_{M}(t) \|^\gamma_{B^\alpha_{2p,2p}(\mathbb{T}^2)} dt \right\|_{L^p(\Omega_1)} &\le \int^T_0 \left\|  \| \Phi^{\Diamond k}_{N}(t) - \Phi^{\Diamond k}_{M}(t) \|^\gamma_{B^\alpha_{2p,2p}(\mathbb{T}^2)} \right\|_{L^p(\Omega_1)} dt \\
&= \int^T_0 \mathbb{E}^{\mathbb{P}_1} \left[  \| \Phi^{\Diamond k}_{N}(t) - \Phi^{\Diamond k}_{M}(t) \|^{p\gamma}_{B^\alpha_{2p,2p}(\mathbb{T}^2)} \right]^{\frac{1}{p}} dt \\
&\lesssim \int^T_0 \mathbb{E}^{\mathbb{P}_1} \left[  \| \Phi^{\Diamond k}_{N}(t) - \Phi^{\Diamond k}_{M}(t) \|^{2p}_{B^\alpha_{2p,2p}(\mathbb{T}^2)} \right]^{\frac{\gamma}{2p}} dt.  
\end{align*}
By Lemma \ref{lemm2}, we can calculate that
\begin{align}
& \frac{1}{k!} \mathbb{E}^{\mathbb{P}_1} \left[ \left( \left( \Phi_N^{\Diamond k} - \Phi_M^{\Diamond k} \right) (t,x) \right) \left( \left( \Phi_N^{\Diamond k} - \Phi_M^{\Diamond k} \right) (t,y) \right)  \right] \notag \\
&= \mathbb{E}^{\mathbb{P}_1} \left[ \Phi_{N} (t, x) \Phi_{N} (t, y) \right]^k - \mathbb{E}^{\mathbb{P}_1} \left[ \Phi_{M} (t, x) \Phi_{N} (t, y) \right]^k \notag \\
&\quad - \mathbb{E}^{\mathbb{P}_1} \left[ \Phi_{N} (t, x) \Phi_{M} (t, y) \right]^k + \mathbb{E}^{\mathbb{P}_1} \left[ \Phi_{M} (t, x) \Phi_{M} (t, y) \right]^k \notag \\
&= \mathbb{E}^{\mathbb{P}_1} \left[ \left( \Phi_N (t,x) - \Phi_M(t,x) \right)\Phi_N (t, y) \right] \sum^{k - 1}_{j = 0} \mathbb{E}^{\mathbb{P}_1} \left[ \Phi_{N} (t, x) \Phi_{N} (t, y) \right]^{k - j - 1} \mathbb{E}^{\mathbb{P}_1} \left[ \Phi_{M} (t, x) \Phi_N (t, y) \right]^j \notag \\
&\quad - \mathbb{E}^{\mathbb{P}_1} \left[  \left( \Phi_N (t,x) - \Phi_M(t,x) \right)\Phi_M (t, y) \right] \sum^{k - 1}_{j = 0} \mathbb{E}^{\mathbb{P}_1} \left[ \Phi_{N} (t, x) \Phi_{M} (t, y) \right]^{k - j - 1}  \mathbb{E}^{\mathbb{P}_1} \left[ \Phi_M (t, x) \Phi_M (t, y) \right]^j \notag 
\end{align}
By applying the Littlewood-Paley blocks $\Delta_{m,x}$ and $\Delta_{m,y}$ to the both sides and then letting $x = y$, we obtain
\begin{align*}
 &\frac{1}{k!} \mathbb{E}^{\mathbb{P}_1} \left[ \left| \Delta_m \left( \Phi^{\Diamond k}_N (t) - \Phi^{\Diamond k}_M (t) \right) \right|^2 \right] \\
&= \sum_{j = 0}^{k - 1}  \sum_{\substack{l_1, \cdots , l_k \in \mathbb{Z}^2 \\ l'_1 , \cdots , l'_k \in \mathbb{Z}^2}} \rho_m ( l_1 + \cdots + l_k ) \rho_m( l'_1 + \cdots + l'_k )e_{l_1 + \cdots +l_k + l'_1+ \cdots +l'_k} \\
&\quad \times J_1^{(N,M)} (l_1 , l'_1 ; t) \prod^{k - j}_{i = 2} J_2^{(N,N)} (l_i , l'_i ; t) \prod^{k}_{i = k - j + 1} J_2^{(M,N)} (l_i , l'_i ; t)  \\
&\quad + \sum_{j = 0}^{k - 1}  \sum_{\substack{l_1, \cdots , l_k \in \mathbb{Z}^2 \\ l'_1 , \cdots , l'_k \in \mathbb{Z}^2}} \rho_m \left( l_1 + \cdots + l_k \right) \rho_m \left( l'_1 + \cdots + l'_k  \right)e_{l_1 + \cdots + l_k + l'_1 + \cdots +l'_k} \\
&\quad \times J_1^{(M,N)} (l_1 , l'_1 ;t) \prod^{k - j}_{i = 2} J_2^{(N,M)} (l_i , l'_i ;t)\prod^{k}_{i = k - j + 1} J_2^{(M,M)} (l_i , l'_i ; t) 
\end{align*}
where we write
\[J_1^{(N,M)} (l,l';t) = \mathbb{E}^{\mathbb{P}_1} \left[ \left( \hat{\Phi}_N (l,t) - \hat{\Phi}_M (l,t) \right) \hat{\Phi}_M (l',t) \right], \]
\[J_2^{(N,M)} (l,l';t) = \mathbb{E}^{\mathbb{P}_1} \left[  \hat{\Phi}_N (l,t) \hat{\Phi}_M (l',t) \right], \]
and $\hat{\Phi}_N(l,t) \coloneqq \langle \Phi_N(t), e_l \rangle$ denotes the Fourier transformation of $\Phi_N(t)$.
By Lemma \ref{lemm1}, 
\begin{align*}
\mathbb{E}^{\mathbb{P}_1} \left[  \hat{\Phi} (l,t) \hat{\Phi} (l',t) \right] &= \mathbb{E}^{\mathbb{P}_1} \left[ \int^t_0 e^{(s-t)|l|^2} d\beta^l_L (s) \int^t_0 e^{(s-t)|l'|^2} d\beta^{l'}_L (s) \right] \\
&= \delta_{l, -l'} \int^t_0 e^{2(s-t)|l|^2} dL(s) \\
&= \delta_{l,-l'} \frac{1}{|l|^{2-2\epsilon}} \int^t_0 |l|^{2-2\epsilon}e^{2(s-t)|l|^2} dL(s) \\
&\lesssim \delta_{l,-l'} \frac{1}{|l|^{2-2\epsilon}} \int^t_0 \frac{1}{(t-s)^{1-\epsilon}} dL(s)
\end{align*}
for any $\epsilon>0$, where we use the fact that $\sup_{x\in \mathbb{R}_+} x^{2-2\epsilon} e^{-2x^2} < +\infty$.
Therefore, combining with the equality above, we obtain
\begin{align*}
& \mathbb{E}^{\mathbb{P}_1} \left[ \left| \Delta_m \left( \Phi^{\Diamond k}_N (t) - \Phi^{\Diamond k}_M (t) \right) \right|^2 \right] \\
&\lesssim \sum_{\substack{N<|l_1|\le M \\ l_2, \cdots , l_k \in \mathbb{Z}^2}} \rho_m ( l_1 + \cdots + l_k )^2 \frac{1}{|l_1|^{2-2\epsilon} \cdots |l_k|^{2-2\epsilon}} \left( \int^t_0 \frac{1}{(t-s)^{1-\epsilon}} dL(s) \right)^k .
\end{align*}
Thus, by the hypercontractivity of Gaussian polynomials,
\begin{align*}
&\int^T_0 \mathbb{E}^{\mathbb{P}_1} \left[  \| \Phi^{\Diamond k}_{N}(t) - \Phi^{\Diamond k}_{M}(t) \|^{2p}_{B^\alpha_{2p,2p}(\mathbb{T}^2)} \right]^{\frac{\gamma}{2p}} dt \\
&= \int^T_0 \mathbb{E}^{\mathbb{P}_1} \left[ \sum_{m\ge-1} 2^{2p\alpha m} \| \Delta_m\left(\Phi^{\Diamond k}_{N}(t) - \Phi^{\Diamond k}_{M}(t)\right) \|^{2p}_{L^{2p}(\mathbb{T}^2)} \right]^{\frac{\gamma}{2p}} dt \\
&= \int^T_0 \left( \int_{\mathbb{T}^2}  \sum_{m\ge-1} 2^{2p\alpha m}  \mathbb{E}^{\mathbb{P}_1} \left[ \left| \Delta_m\left(\Phi^{\Diamond k}_{N}(t) - \Phi^{\Diamond k}_{M}(t)\right) \right|^{2p} \right] dx \right)^{\frac{\gamma}{2p}} dt \\
&\lesssim \int^T_0 \left( \int_{\mathbb{T}^2}  \sum_{m\ge-1} 2^{2p\alpha m}  \mathbb{E}^{\mathbb{P}_1} \left[ \left| \Delta_m\left(\Phi^{\Diamond k}_{N}(t) - \Phi^{\Diamond k}_{M}(t)\right) \right|^2 \right]^p dx \right)^{\frac{\gamma}{2p}} dt \\
&\lesssim \int^T_0 \left(  \sum_{m\ge-1} 2^{2p\alpha m}  \left(\sum_{\substack{N<|l_1|\le M \\ l_2, \cdots , l_k \in \mathbb{Z}^2}} \rho_m ( l_1 + \cdots + l_k )^2 \frac{1}{|l_1|^{2-2\epsilon} \cdots |l_k|^{2-2\epsilon}}\right)^p   \left( \int^t_0 \frac{1}{(t-s)^{1-\epsilon}} dL(s) \right)^{pk} \right)^{\frac{\gamma}{2p}} dt \\
&= \int^T_0 \left( \int^t_0 \frac{1}{(t-s)^{1-\epsilon}} dL(s) \right)^{\frac{k\gamma}{2}} dt \left( \sum_{m\ge-1} 2^{2p\alpha m}\left(\sum_{\substack{N<|l_1|\le M \\ l_2, \cdots , l_k \in \mathbb{Z}^2}} \rho_m ( l_1 + \cdots + l_k )^2 \frac{1}{|l_1|^{2-2\epsilon} \cdots |l_k|^{2-2\epsilon}}\right)^p \right)^{\frac{\gamma}{2p}}. 
\end{align*}
Therefore, by the Besov embedding (Lemma \ref{emb}), we conclude that $\Phi_N^{\Diamond k}$ converges to some $\Phi^{\Diamond k}$ as $N\rightarrow \infty$ in $L^p\left( \Omega_1; L^\gamma ([0,T];B^\alpha_{\infty, \infty}(\mathbb{T}^2))\right)$ for fixed $\omega_2 \in \Omega_2$ if $\alpha<-\epsilon k, \gamma < \frac{2}{(1-\epsilon)k}$ for some $0<\epsilon<\frac{1}{k}$. Here, we use the fact that 
\begin{align}
\int^T_0 \left( \int^t_0 \frac{1}{(t-s)^{1-\epsilon}} dL(s) \right)^{\frac{k\gamma}{2}} dt \lesssim \int^T_0 \int^T_s \frac{1}{(t-s)^{\frac{k\gamma}{2(1-\epsilon)}}}dt dL(s) \lesssim L(T) < +\infty \label{yuyu}
\end{align} 
for $\gamma < \frac{2}{(1-\epsilon)k}$ and the fact that
the infinite sum
\begin{align}  \sum_{m\ge-1} 2^{2p\alpha m}\left(\sum_{l_1, l_2, \cdots , l_k \in \mathbb{Z}^2} \rho_m ( l_1 + \cdots + l_k )^2 \frac{1}{|l_1|^{2-2\epsilon} \cdots |l_k|^{2-2\epsilon}}\right)^p \label{haku} 
\end{align}
converges if $\alpha<-\epsilon k, 0<\epsilon<\frac{1}{k}$ where we used H\"older's inequality in \eqref{yuyu}, see also Remarks \ref{tuika} and \ref{tuika2} below.

Next, we prove the $\mathbb{P}_1$-almost-sure convergence for fixed $\omega_2$. Once we prove this, the $\mathbb{P}$-almost-sure convergence follows from Fubini's theorem. By the argument above, we obtain 
\[ \mathbb{E}^{\mathbb{P}_1} \left[ \left( \int^T_0  \| \Phi^{\Diamond k}_{N}(t) - \Phi^{\Diamond k}(t) \|^\gamma_{B^\alpha_{\infty,\infty}(\mathbb{T}^2)} dt\right)^p \right] \lesssim_{\omega_2} N^{-p\delta\gamma} \]
for some small $\delta>0$.
Therefore, for any $c>0$,
\begin{align*}
 \mathbb{P}_1 \left( \int^T_0  \| \Phi^{\Diamond k}_{N}(t) - \Phi^{\Diamond k}(t) \|^\gamma_{B^\alpha_{\infty,\infty}(\mathbb{T}^2)} dt > c \right) &\lesssim \mathbb{E}^{\mathbb{P}_1} \left[ \left( \int^T_0  \| \Phi^{\Diamond k}_{N}(t) - \Phi^{\Diamond k}(t) \|^\gamma_{B^\alpha_{\infty,\infty}(\mathbb{T}^2)} dt\right)^p \right] \\
&\lesssim_{\omega_2} N^{-p\delta \gamma} 
\end{align*}
by Chebyshev's inequality.
When we take sufficiently large $p$, the $\mathbb{P}_1$-almost-sure convergence follows from the Borel-Cantelli lemma.
\end{proof}

\begin{rem}\label{tuika}
The condition  $\gamma < \frac{2}{(1-\epsilon)k}$ in Proposition \ref{rere} is sharp in view of \eqref{yuyu} in the following sense: For example, if 
\begin{gather}
L(s) = 
\begin{cases}
0 \ \ \ \mathrm{when} \ 0\leq s < \frac{T}{2} \\
1 \ \ \ \mathrm{when} \ s\geq \frac{T}{2},
\end{cases}
\end{gather}
then
\begin{gather}
\int^t_0 \frac{1}{(t-s)^{1-\epsilon}}dL(s) = 
\begin{cases}
0 \ \ \ \mathrm{when} \ 0\leq t < \frac{T}{2} \\
\infty \ \ \ \mathrm{when} \ t = \frac{T}{2}\\
\frac{1}{(t-\frac{T}{2})^{1-\epsilon}} \ \ \ \mathrm{when} \ t > \frac{T}{2}.
\end{cases}
\end{gather}
Therefore, in this case
\[ \int^T_0 \left( \int^t_0 \frac{1}{(t-s)^{1-\epsilon}} dL(s)  \right)^{\frac{k\gamma}{2}}dt \geq \int^{T}_{\frac{T}{2}} \frac{1}{(t-\frac{T}{2})^{\frac{k\gamma(1-\epsilon)}{2}}}dt = \infty \]
if $\frac{k\gamma(1-\epsilon)}{2}\geq 1$. 
\end{rem}

\begin{rem}\label{tuika2}
The conditions $\alpha<-\epsilon k$ and $0<\epsilon<\frac{1}{k}$ in Proposition \ref{rere} come from \eqref{haku}: For any $\delta>0$, there holds
\[ |l_1 + \cdots + l_k|^{2(\alpha + \delta)}\rho_m(l_1 + \cdots + l_k)^2 \simeq 2^{2m(\alpha + \delta)} \rho_m(l_1 + \cdots + l_k)^2 . \]
Therefore,
\begin{align}
&\sum_{m\ge-1} 2^{2p\alpha m}\left(\sum_{l_1, l_2, \cdots , l_k \in \mathbb{Z}^2} \rho_m ( l_1 + \cdots + l_k )^2 \frac{1}{|l_1|^{2-2\epsilon} \cdots |l_k|^{2-2\epsilon}}\right)^p \notag \\ 
&\lesssim \sum_{m\ge-1} 2^{-2p\delta m}\left(\sum_{l_1, l_2, \cdots , l_k \in \mathbb{Z}^2} | l_1 + \cdots + l_k |^{2(\alpha + \delta)} \frac{1}{|l_1|^{2-2\epsilon} \cdots |l_k|^{2-2\epsilon}}\right)^p.\label{mkt}
\end{align}
On the other hand, because for any $\epsilon '>0$
\[ 1 = \sum_{m\ge -1} \rho_m(x) \leq \sum_{m\ge -1}\frac{2^{2m\delta}}{\epsilon '} \rho_m(x)^2 + \epsilon '\sum_{m\ge -1}2^{-2m\delta},\]
by choosing suffiecintly small $\epsilon '>0$, we obtain $1 \lesssim_\delta \sum_{m\ge -1}2^{2m\delta}\rho_m(x)^2$ uniformly in $x\in \mathbb{R}^d$. Thus, there holds
\begin{align}
&\sum_{l_1, l_2, \cdots , l_k \in \mathbb{Z}^2} | l_1 + \cdots + l_k |^{2(\alpha - 2\delta)} \frac{1}{|l_1|^{2-2\epsilon} \cdots |l_k|^{2-2\epsilon}}\notag \\ 
&\lesssim \sum_{m\ge -1}2^{2m\delta} \sum_{l_1, l_2, \cdots , l_k \in \mathbb{Z}^2} \rho_m(l_1 + \cdots + l_k)^2 | l_1 + \cdots + l_k |^{2(\alpha - 2\delta)} \frac{1}{|l_1|^{2-2\epsilon} \cdots |l_k|^{2-2\epsilon}}\notag \\ 
&\simeq \sum_{m\ge -1}2^{2m\delta + 2m(\alpha - 2\delta)}\sum_{l_1, l_2, \cdots , l_k \in \mathbb{Z}^2} \rho_m(l_1 + \cdots + l_k)^2 \frac{1}{|l_1|^{2-2\epsilon} \cdots |l_k|^{2-2\epsilon}}\notag \\
&\lesssim \left\{ \sum_{m\ge-1} 2^{2p\alpha m}\left(\sum_{l_1, l_2, \cdots , l_k \in \mathbb{Z}^2} \rho_m ( l_1 + \cdots + l_k )^2 \frac{1}{|l_1|^{2-2\epsilon} \cdots |l_k|^{2-2\epsilon}}\right)^p\right\}^{\frac{1}{p}}.\label{mkt2}
\end{align}
After all, we obtain
\begin{align*}
&\left(\sum_{l_1, l_2, \cdots , l_k \in \mathbb{Z}^2} | l_1 + \cdots + l_k |^{2(\alpha - 2\delta)} \frac{1}{|l_1|^{2-2\epsilon} \cdots |l_k|^{2-2\epsilon}}\right)^p \\
&\lesssim \sum_{m\ge-1} 2^{2p\alpha m}\left(\sum_{l_1, l_2, \cdots , l_k \in \mathbb{Z}^2} \rho_m ( l_1 + \cdots + l_k )^2 \frac{1}{|l_1|^{2-2\epsilon} \cdots |l_k|^{2-2\epsilon}}\right)^p \\
&\lesssim \left(\sum_{l_1, l_2, \cdots , l_k \in \mathbb{Z}^2} | l_1 + \cdots + l_k |^{2(\alpha + \delta)} \frac{1}{|l_1|^{2-2\epsilon} \cdots |l_k|^{2-2\epsilon}}\right)^p
\end{align*}
for any $\delta>0$ from \eqref{mkt} and \eqref{mkt2}. And one can check that the infinite sum
\[ \sum_{l_1, l_2, \cdots , l_k \in \mathbb{Z}^2} | l_1 + \cdots + l_k |^{2\beta} \frac{1}{|l_1|^{2-2\epsilon} \cdots |l_k|^{2-2\epsilon}} \]
converges if $\beta< -\epsilon k$ and $\epsilon< \frac{1}{k}$ by applying \cite[Lemma 2.3]{3dwave}, for example.
\end{rem}

As a corollary of Proposition \ref{rere}, we can see that $\Phi$ is indeed a distribution, not a function in the following sense.
\begin{cor}\label{coco}
For any $T>0$ and $\omega_2 \in \Omega_2$ with $L(T-)(\omega_2) \coloneqq \lim_{t \nearrow T}L(t)(\omega_2) >0$, there holds $\Phi \notin L^2([0,T]\times \mathbb{T}^2)$ for $\mathbb{P}_1$-almost surely.
\end{cor}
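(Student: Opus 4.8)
The plan is to rewrite $\|\Phi\|_{L^2([0,T]\times\mathbb{T}^2)}^2$ as a series of independent nonnegative random variables indexed by Fourier frequencies and to prove that this series diverges $\mathbb{P}_1$-almost surely by a second-moment concentration argument over dyadic frequency shells. Fix $\omega_2\in\Omega_2$ with $L(T-)(\omega_2)>0$; since $L$ is non-decreasing with $L(0)=0$ we have $L(T-)=\sup_{t<T}L(t)$, so I can fix $T'\in(0,T)$ with $a_0\coloneqq L(T')>0$ and set $\eta\coloneqq T-T'>0$. Write $\hat\Phi(l,t)\coloneqq\langle\Phi(t),e_l\rangle$ for the Fourier coefficients of the distribution $\Phi(t)$ and $Y_l\coloneqq\int_0^T|\hat\Phi(l,t)|^2\,dt\in[0,\infty]$. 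Parseval's identity and Tonelli's theorem give
\[ \int_0^T\|\Phi(t)\|_{L^2(\mathbb{T}^2)}^2\,dt=\sum_{l\in\mathbb{Z}^2}Y_l, \]
so $\Phi\in L^2([0,T]\times\mathbb{T}^2)$ if and only if $\sum_l Y_l<\infty$, and it suffices to prove $\sum_l Y_l=+\infty$ $\mathbb{P}_1$-a.s.\ (with $\omega_2$ fixed). Note $Y_l=Y_{-l}$, and for distinct pairs $\{l,-l\}$ the $Y_l$ are $\mathbb{P}_1$-independent, being functionals of the independent Brownian motions $\beta^l$.

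Next I would estimate the first two moments of $Y_l$. Fixing $\omega_2$, Lemma \ref{lemm1} shows that $\hat\Phi(l,t)$ is a centered Gaussian on $\Omega_1$ with $\mathbb{E}^{\mathbb{P}_1}[|\hat\Phi(l,t)|^2]\simeq\int_0^t e^{2(s-t)|l|^2}\,dL(s)$, so by Tonelli, for $l\neq0$,
\[ \mathbb{E}^{\mathbb{P}_1}[Y_l]\simeq\int_0^T\frac{1-e^{-2(T-s)|l|^2}}{2|l|^2}\,dL(s). \]
Bounding $1-e^{-2(T-s)|l|^2}\le1$ and using $L(T)<\infty$ gives $\mathbb{E}^{\mathbb{P}_1}[Y_l]\lesssim|l|^{-2}$; restricting the integral to $[0,T']$, where $T-s\ge\eta$, and using $1-e^{-2\eta|l|^2}\ge\tfrac12$ for $|l|$ large gives the matching lower bound $\mathbb{E}^{\mathbb{P}_1}[Y_l]\gtrsim a_0|l|^{-2}$ for $|l|\ge L_0$. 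Moreover, since for fixed $\omega_2$ the family $\{\hat\Phi(l,t)\}_{t}$ is Gaussian, Wick's formula together with the Cauchy--Schwarz inequality yields $\mathbb{E}^{\mathbb{P}_1}[|\hat\Phi(l,t)|^2|\hat\Phi(l,t')|^2]\lesssim\mathbb{E}^{\mathbb{P}_1}[|\hat\Phi(l,t)|^2]\,\mathbb{E}^{\mathbb{P}_1}[|\hat\Phi(l,t')|^2]$, whence, again by Tonelli, $\mathbb{E}^{\mathbb{P}_1}[Y_l^2]\lesssim(\mathbb{E}^{\mathbb{P}_1}[Y_l])^2\lesssim|l|^{-4}$.

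Finally I would upgrade the divergence of the mean to almost-sure divergence; this is the only genuine obstacle, since $\mathbb{E}^{\mathbb{P}_1}[\sum_l Y_l]=\sum_l\mathbb{E}^{\mathbb{P}_1}[Y_l]=+\infty$ (because $\sum_{l\in\mathbb{Z}^2\setminus\{0\}}|l|^{-2}=+\infty$ in two dimensions) does not by itself force $\sum_l Y_l=\infty$ a.s.\ — a series of independent nonnegative terms with divergent mean can still converge a.s. To overcome this I would use the dyadic shells $A_j\coloneqq\{l\in\mathbb{Z}^2:2^j\le|l|<2^{j+1}\}$ and $S_j\coloneqq\sum_{l\in A_j}Y_l$. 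Since $\#A_j\simeq4^j$ and $\mathbb{E}^{\mathbb{P}_1}[Y_l]\simeq4^{-j}$ for $l\in A_j$ and $j$ large, there exist $0<b_1<b_2$ and $j_0$ with $b_1\le\mathbb{E}^{\mathbb{P}_1}[S_j]\le b_2$ for $j\ge j_0$, while independence within the shell (up to the pairing $l\leftrightarrow-l$) and the fourth-moment bound give $\mathrm{Var}^{\mathbb{P}_1}(S_j)\lesssim\sum_{l\in A_j}\mathbb{E}^{\mathbb{P}_1}[Y_l^2]\lesssim4^j\cdot4^{-2j}=4^{-j}$. By Chebyshev's inequality $\mathbb{P}_1(S_j\le b_1/2)\lesssim4^{-j}$ for $j\ge j_0$, which is summable, so the Borel--Cantelli lemma gives $S_j>b_1/2$ for all large $j$, $\mathbb{P}_1$-almost surely, hence $\sum_l Y_l\ge\sum_{j\text{ large}}S_j=+\infty$ $\mathbb{P}_1$-a.s., which is the claim. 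Tracing through the argument, the hypothesis $L(T-)>0$ is used only to produce $T'<T$ with $L(T')>0$, and this is exactly what keeps the shell-means $\mathbb{E}^{\mathbb{P}_1}[S_j]$ bounded away from zero.
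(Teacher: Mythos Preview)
Your proof is correct and takes a genuinely different route from the paper. The paper argues as follows: since $\Phi_N^{\Diamond 2}=\Phi_N^2-c_N^H$ converges in $L^1([0,T];B^{-\epsilon}_{\infty,\infty})$ $\mathbb{P}_1$-almost surely by Proposition~\ref{rere}, pairing with the constant function shows that $\int_0^T\int_{\mathbb{T}^2}\Phi_N^{\Diamond 2}\,dx\,dt$ stays bounded; on the other hand a direct computation (exactly your first-moment calculation, summed over $|l|\le N$) shows $\int_0^T c_N^H(t)\,dt\to\infty$, so $\|\Phi_N\|_{L^2([0,T]\times\mathbb{T}^2)}^2\to\infty$ almost surely, which is impossible if $\Phi\in L^2$ since $\Phi_N=P_N\Phi$.

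Your approach bypasses Proposition~\ref{rere} entirely: you work with the Fourier series $\sum_l Y_l$ directly and upgrade divergence of the mean to almost-sure divergence via a second-moment concentration estimate on dyadic shells plus Borel--Cantelli. This is more self-contained and exposes the mechanism (the logarithmic divergence $\sum_l|l|^{-2}=\infty$ in two dimensions) very transparently. The paper's argument is shorter in context because the heavy lifting has already been done in proving the Wick square converges; in effect, Proposition~\ref{rere} already encodes the same Gaussian hypercontractivity/variance control that you redo by hand, so the two proofs are morally close but packaged differently.
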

\begin{proof}
Fix $\omega_2 \in \Omega_2$ and $\delta>0$ with $L(T-\delta)(\omega_2)>0$.
By Proposition \ref{rere}, $\Phi_N^{\Diamond 2} = \Phi^2_N - c^H_N$ converges in $L^1([0,T];B^{-\epsilon}_{\infty,\infty})$ for $\mathbb{P}_1$-almost surely.
On the other hand, there holds
\begin{align*}
\int^T_0 c^H_N (t) dt &= \frac{1}{2\pi} \sum_{|l| \le N} \int^T_0 \int^t_0 e^{2(s-t)|l|^2} dL(s) dt \\
&= \frac{1}{2\pi} \sum_{|l| \le N} \int^T_0 \int^T_s e^{2(s-t)|l|^2} dt dL (s) \\
&= \frac{1}{2\pi} \sum_{|l| \le N} \int^T_0 \frac{1}{2|l|^2} \left( 1 - e^{2(s-T)|l|^2} \right) dL(s) \\
&\ge \frac{1}{4\pi} \sum_{|l|\le N} \frac{1}{|l|^2} \int^{T-\delta}_0 \left( 1 - e^{-2\delta |l|^2} \right) dL(s) \ \nearrow + \infty \ \ \mathrm{as}\ N \rightarrow \infty .
\end{align*}
Therefore, we conclude that $\Phi \notin L^2([0,T]\times\mathbb{T}^2)$ for $\mathbb{P}_1$-almost surely.
\end{proof}

In the rest of this section, we prove the convergence of $\Psi_N^{\Diamond k}$ in \eqref{qq}.

\begin{prop}\label{yy}
Fix $T > 0$ and $k \in \mathbb{Z}_{>0}$.
For any $\alpha<0$, $p\ge 1$, and fixed $\omega_2 \in \Omega_2$, $\Psi_N^{\Diamond k} (\omega_1, \omega_2)$ converges in $L^p \left( \Omega_1; C ([0,T];B^\alpha_{\infty, \infty}(\mathbb{T}^2)) \right)$ and $\mathbb{P}_1$-almost surely.
In particular, $\Psi_N^{ \Diamond k}$ converges in $C ([0,T];B^\alpha_{\infty, \infty}(\mathbb{T}^2))$ $\mathbb{P}$-almost surely.
\end{prop}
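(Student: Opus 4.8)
The plan is to run the argument of Proposition~\ref{rere} with the heat kernel $e^{(s-t)|l|^2}$ replaced by the wave kernel $\frac{\sin((t-s)|l|)}{|l|}$, but to promote the $L^\gamma$-in-time bounds to $C$-in-time bounds by adding a Kolmogorov continuity estimate in the time variable. Fix $\omega_2\in\Omega_2$; then $\Psi_N(\cdot,\cdot)$ is a centered Gaussian family on $\Omega_1$, and by Lemma~\ref{lemm1} together with the independence of increments of $\beta^l_L$ over disjoint time intervals, for $s\le t$
\[
\mathbb{E}^{\mathbb{P}_1}\!\left[\hat{\Psi}_N(l,s)\,\hat{\Psi}_N(l',t)\right]
=\delta_{l,-l'}\int_0^{s}\frac{\sin((s-r)|l|)\sin((t-r)|l|)}{|l|^2}\,dL(r),
\]
so in particular $\mathbb{E}^{\mathbb{P}_1}[\hat{\Psi}_N(l,t)\hat{\Psi}_N(-l,t)]=\int_0^t\frac{\sin^2((t-r)|l|)}{|l|^2}dL(r)\lesssim_{\omega_2}|l|^{-2}$ uniformly in $t\in[0,T]$ and $N$. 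The essential gain over the heat case is that the extra weight is $|l|^{-2}$ rather than $|l|^{-2+2\epsilon}$, so the relevant spatial sums $\sum_{m}2^{2p\alpha m}\big(\sum_{l_1,\dots,l_k}\rho_m(l_1+\cdots+l_k)^2|l_1|^{-2}\cdots|l_k|^{-2}\big)^p$ converge for every $\alpha<0$, which is why no upper bound on $k$ is needed.

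Next I would estimate the second moments of the Littlewood--Paley blocks of $\Psi_N^{\Diamond k}(t)-\Psi_M^{\Diamond k}(t)$ (for $M>N$) and of the time increment $\Psi_N^{\Diamond k}(t)-\Psi_N^{\Diamond k}(s)$. Since both lie in the $k$-th Wiener chaos over $W$ (for fixed $\omega_2$), Lemmas~\ref{gauss} and~\ref{lemm2} turn $\mathbb{E}^{\mathbb{P}_1}[|\Delta_m(\cdots)|^2]$ into four-term combinations of $k$-th powers of the covariances $K_N^{\sigma_1,\sigma_2}(l):=\mathbb{E}^{\mathbb{P}_1}[\hat{\Psi}_N(l,\sigma_1)\hat{\Psi}_N(-l,\sigma_2)]$ with $\sigma_1,\sigma_2\in\{s,t\}$, each weighted by $\rho_m(l_1+\cdots+l_k)^2$. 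Telescoping $a^k-b^k=(a-b)\sum_{j}a^j b^{k-1-j}$ reduces each bracket to a covariance difference times bounded covariances, and the arithmetic is controlled by the elementary interpolation estimates
\[
|\sin((t-r)|l|)-\sin((s-r)|l|)|\le\min(|t-s|\,|l|,2)\le 2(|t-s|\,|l|)^\theta,\qquad
\sin^2((t-r)|l|)\le(|t-s|^2|l|^2)^\theta\ \ (r\in[s,t]),
\]
valid for $\theta\in(0,1]$, which yield $|K_N^{\sigma_1,\sigma_2}(l)-K_N^{\sigma_1',\sigma_2'}(l)|\lesssim_{\omega_2}|t-s|^{\theta}|l|^{-2+2\theta}$ whenever $\{\sigma_1,\sigma_2,\sigma_1',\sigma_2'\}\subset\{s,t\}$. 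The crucial structural point here is that the wave kernel vanishes on the diagonal ($\sin 0=0$), which annihilates the jump contributions of the càdlàg process $L$ and is exactly what makes $t\mapsto\Psi_N^{\Diamond k}(t)$ continuous with a Hölder modulus, in contrast to the heat case where no such modulus exists. Choosing $\theta$ small enough keeps $\sum_{m}2^{2p\alpha m}\big(\sum_{l_1,\dots,l_k}\rho_m(l_1+\cdots+l_k)^2|l_1|^{-2+2\theta}|l_2|^{-2}\cdots|l_k|^{-2}\big)^p$ convergent for every $\alpha<0$, and, combined with the extra factor $N^{-\delta}$ coming from the restriction $N<|l_1|\le M$ exactly as in Proposition~\ref{rere}, gives
\[
\mathbb{E}^{\mathbb{P}_1}\!\left[|\Delta_m(\Psi_N^{\Diamond k}(t)-\Psi_M^{\Diamond k}(t))|^2\right]\lesssim_{\omega_2}N^{-\delta}2^{-m\sigma},\qquad
\mathbb{E}^{\mathbb{P}_1}\!\left[|\Delta_m(\Psi_N^{\Diamond k}(t)-\Psi_N^{\Diamond k}(s))|^2\right]\lesssim_{\omega_2}|t-s|^{\theta}2^{-m\sigma}
\]
for suitable $\delta,\sigma>0$.

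Finally I would pass from $L^2(\Omega_1)$ to $L^{2p}(\Omega_1)$ by Gaussian hypercontractivity, exactly as in the proof of Proposition~\ref{rere}, sum over $m$ to obtain, for any $\beta<0$,
\[
\mathbb{E}^{\mathbb{P}_1}\!\left[\|\Psi_N^{\Diamond k}(t)-\Psi_M^{\Diamond k}(t)\|_{B^\beta_{2p,2p}(\mathbb{T}^2)}^{2p}\right]\lesssim_{\omega_2}N^{-p\delta},\qquad
\mathbb{E}^{\mathbb{P}_1}\!\left[\|\Psi_N^{\Diamond k}(t)-\Psi_N^{\Diamond k}(s)\|_{B^\beta_{2p,2p}(\mathbb{T}^2)}^{2p}\right]\lesssim_{\omega_2}|t-s|^{p\theta},
\]
and then use the Besov embedding $B^\beta_{2p,2p}(\mathbb{T}^2)\hookrightarrow B^{\beta-1/p}_{\infty,\infty}(\mathbb{T}^2)$ of Lemma~\ref{emb} together with Kolmogorov's continuity criterion in time (on the Banach space $B^\alpha_{\infty,\infty}(\mathbb{T}^2)$, applicable once $p\theta>1$) to conclude that $(\Psi_N^{\Diamond k})_N$ is Cauchy in $L^p(\Omega_1;C([0,T];B^\alpha_{\infty,\infty}(\mathbb{T}^2)))$; letting $p\to\infty$ and shrinking $\theta$ accordingly covers every $\alpha<0$. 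The $\mathbb{P}_1$-almost-sure convergence follows as in Proposition~\ref{rere}: the bound $\mathbb{E}^{\mathbb{P}_1}[\|\Psi_N^{\Diamond k}-\Psi^{\Diamond k}\|_{C([0,T];B^\alpha_{\infty,\infty})}^p]\lesssim_{\omega_2}N^{-p\delta}$ with Chebyshev's inequality and the Borel--Cantelli lemma (taking $p$ large) gives $\mathbb{P}_1$-a.s. convergence for fixed $\omega_2$, and Fubini's theorem upgrades this to $\mathbb{P}$-a.s. convergence. The main obstacle is the time-regularity estimate of the second paragraph: one must verify that, despite $L$ being merely nondecreasing and càdlàg, the cancellation coming from $\sin 0=0$ genuinely produces a Hölder-in-time modulus for $\Psi_N^{\Diamond k}$ that is uniform in $N$, so that Kolmogorov's criterion applies — this is the precise place where the wave equation behaves better than the heat equation treated in Proposition~\ref{rere}.
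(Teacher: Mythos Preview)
Your approach is correct and is essentially the same as the paper's: fix $\omega_2$, exploit that $\Psi_N$ is Gaussian on $\Omega_1$, reduce via hypercontractivity to second moments of Littlewood--Paley blocks, expand these with Lemma~\ref{lemm2} and the telescoping identity $a^k-b^k=(a-b)\sum a^jb^{k-1-j}$, bound the covariance increments using $|\sin\theta|\lesssim|\theta|^\rho$ (the paper's $\rho$ is your $\theta$), and conclude by Kolmogorov plus the Besov embedding of Lemma~\ref{emb}; the almost-sure statement is handled exactly as you say. Your remark that the extra $|l|^{-2}$ (versus $|l|^{-2+2\epsilon}$ in the heat case) removes any restriction on $k$, and that $\sin 0=0$ is what produces time-continuity despite jumps of $L$, is exactly the mechanism at work.

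There is one small imprecision. The two separate bounds you record,
\[
\mathbb{E}^{\mathbb{P}_1}\!\bigl[\|\Psi_N^{\Diamond k}(t)-\Psi_M^{\Diamond k}(t)\|_{B^\beta_{2p,2p}}^{2p}\bigr]\lesssim_{\omega_2}N^{-p\delta},
\qquad
\mathbb{E}^{\mathbb{P}_1}\!\bigl[\|\Psi_N^{\Diamond k}(t)-\Psi_N^{\Diamond k}(s)\|_{B^\beta_{2p,2p}}^{2p}\bigr]\lesssim_{\omega_2}|t-s|^{p\theta},
\]
do not by themselves give Cauchy in $L^p(\Omega_1;C([0,T];B^\alpha_{\infty,\infty}))$: Kolmogorov applied to $\Psi_N^{\Diamond k}-\Psi_M^{\Diamond k}$ using only the second line yields a uniform $C^{\theta'}$-bound but no decay in $N$. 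The paper instead estimates the time increment of the difference directly,
\[
\mathbb{E}^{\mathbb{P}_1}\!\bigl[\|d_{s,t}(\Psi_N^{\Diamond k}-\Psi_M^{\Diamond k})\|_{B^\alpha_{2p,2p}}^{2p}\bigr]
\lesssim_{\omega_2}|t-s|^{\rho p}\sum_{m\ge-1}2^{2p\alpha m}\Bigl(\sum_{N<|l_1|\le M}\rho_m(l_1+\cdots+l_k)^2\frac{1}{|l_1|^{2-\rho}|l_2|^2\cdots|l_k|^2}\Bigr)^p,
\]
so that the $N^{-\delta}$ factor and the $|t-s|^{\rho p}$ factor appear simultaneously; Kolmogorov then gives the Cauchy property in $C([0,T];B^\alpha_{\infty,\infty})$. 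Your own computation already contains everything needed for this combined estimate (just run the telescoping on $d_{s,t}(\Psi_N^{\Diamond k}-\Psi_M^{\Diamond k})$ rather than on the two pieces separately), so the fix is immediate.
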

\begin{rem}
Differently from the case of the heat equation, $\Psi$ has time-continuity, whether or not the subordinator $L$ has continuous paths. See Section 6.2 for the explanation of the discontinuity of the solution of the linear stochastic heat equation.
\end{rem}

\begin{rem}\label{hv}
In \cite{wave}, they show the convergence of the renormalized powers of $\Psi_N$ on the Sobolev space $W^{\alpha, \infty}(\mathbb{T}^2)$ instead of the Besov space $B^{\alpha}_{\infty,\infty}(\mathbb{T}^2)$. By the same kind of argument as in the proof below, one can also prove the convergence in $C ([0,T];W^{\alpha, \infty}(\mathbb{T}^2))$.
\end{rem}
\begin{proof}
Fix $\omega_2 \in \Omega_2$ and $1\le p < \infty$. To apply the Kolmogorov's continuity theorem, we consider the difference
\[ d_{s,t} \Psi_N^{\Diamond k} \coloneqq \Psi_N^{\Diamond k}(t) - \Psi_N^{\Diamond k}(s) . \]
For the simplicity of  notation, we derive the estimate on $\Psi_N^{\Diamond k}$, instead of the difference $\Psi_N^{\Diamond k} - \Psi_M^{\Diamond k}$.
By the hypercontractivity of Gaussian polynomials, there holds
\begin{align}
\mathbb{E}^{\mathbb{P}_1} \left[ \| d_{s,t} \Psi_N^{\Diamond k} \|_{B^{\alpha}_{2p,2p}(\mathbb{T}^2)}^{2p} \right] &= \mathbb{E}^{\mathbb{P}_1} \left[ \sum_{m\ge -1} 2^{2p\alpha m} \left\| \Delta_m \left( d_{s,t} \Psi_N^{\Diamond k}\right) \right\|_{L^{2p}(\mathbb{T}^2)}^{2p} \right] \notag \\
&\lesssim \sum_{m\ge -1} 2^{2p\alpha m} \int_{\mathbb{T}^2} \mathbb{E}^{\mathbb{P}_1} \left[ \left| \Delta_m \left( d_{s,t} \Psi_N^{\Diamond k}\right) (x)\right|^2 \right]^p dx . \label{kaka}
\end{align}
Then, by a similar argument to the proof of Theorem \ref{rere}, it can be shown that
\begin{align*}
&\frac{1}{k!}\mathbb{E}^{\mathbb{P}_1} \left[ \left| \Delta_m \left( d_{s,t} \Psi_N^{\Diamond k}\right) \right|^2 \right] \\
&=  \sum_{j = 0}^{k - 1}  \sum_{\substack{l_1, \cdots , l_k \in \mathbb{Z}^2 \\ l'_1 , \cdots , l'_k \in \mathbb{Z}^2}} \rho_m ( l_1 + \cdots + l_k ) \rho_m( l'_1 + \cdots + l'_k )e_{l_1 + \cdots + l_k + l'_1 + \cdots + l'_k} \\
&\quad \times G_1 (l_1 , l'_1 ; s,t) \prod^{k - j}_{i = 2} G_2 (l_i , l'_i ; t,t) \prod^{k}_{i = k - j + 1} G_2 (l_i , l'_i ; s,t)  \\
&\quad + \sum_{j = 0}^{k - 1}  \sum_{\substack{l_1, \cdots , l_k \in \mathbb{Z}^2 \\ l'_1 , \cdots , l'_k \in \mathbb{Z}^2}} \rho_m \left( l_1 + \cdots + l_k \right) \rho_m \left( l'_1 + \cdots + l'_k  \right)e_{l_1 + \cdots + l_k + l'_1 + \cdots + l'_k} \\
&\quad \times G_1 (l_1 , l'_1 ;t,s) \prod^{k - j}_{i = 2} G_2 (l_i , l'_i ;t,s)\prod^{k}_{i = k - j + 1} G_2 (l_i , l'_i ; s,s) 
\end{align*}
where we write
\[G_1 (l,l';s,t) = \mathbb{E}^{\mathbb{P}_1} \left[ d_{s,t} \hat{\Psi}_N (l) \hat{\Psi}_N (l',t) \right] \] and
\[G_2 (l,l';s,t) = \mathbb{E}^{\mathbb{P}_1} \left[  \hat{\Psi}_N (l,s) \hat{\Psi}_N (l',t) \right]. \]

By Lemma \ref{lemm1}, for any $u_1, u_2 \in \{s,t\}$, 
\begin{align}
\left| \mathbb{E}^{\mathbb{P}_1} \left[ \hat{\Psi} (l, u_1) \hat{\Psi} (l', u_2)  \right] \right| &= \left| \mathbb{E}^{\mathbb{P}_1} \left[ \int^{u_1}_0 \frac{\sin((u_1 - u ) |l| )}{|l|} d\beta^l_L (u)  \int^{u_2}_0 \frac{\sin((u_2 - u ) |l'| )}{|l|} d\beta^{l'}_L (u) \right] \right| \notag \\
&\le \delta_{l, -l'} \int^{u_1 \wedge u_2 }_0 \frac{| \sin((u_1 - u ) |l| ) \sin((u_2 - u ) |l| ) }{|l|^2 } dL(u) \lesssim \delta_{l, -l'}\frac{1}{|l|^2} L(T) \notag
\end{align}
and by the triangle inequality,
\begin{align}
\left| \mathbb{E}^{\mathbb{P}_1} \left[ d_{s, t} \hat{\Psi} (l) \hat{\Psi} (l', u_1) \right] \right| &= \left| \mathbb{E}^{\mathbb{P}_1} \left[ \left( \int^{t}_0 \frac{\sin((t - u) |l| )}{|l|} d\beta^l_L (u) - \int^{s}_0 \frac{\sin((s - u) |l| )}{|l|} d\beta^l_L (u) \right) \right. \right.\notag \\
&\quad \quad \quad \quad \quad \quad \quad \quad \quad \quad \quad \quad \quad \quad \quad \quad \quad \quad \quad \times \left. \left. \int^{u_1}_0 \frac{\sin((u_1 - u) |l'| )}{|l'|} d\beta^{l'}_L (u)  \right] \right| \notag \\
&\le \left| \mathbb{E}^{\mathbb{P}_1} \left[ \int^{t}_{s } \frac{\sin((t - u) |l| )}{|l|} d\beta^l_L (u) \int^{u_1}_0 \frac{\sin((u_1 - u) |l'| )}{|l'|} d\beta^{l'}_L (u) \right] \right| \notag \\
&\  + \left| \mathbb{E}^{\mathbb{P}_1} \left[ \int^{s}_0 \frac{\sin((t - u) |l| ) - \sin((s - u) |l| )}{|l|} d\beta^l_L (u) \int^{u_1}_0 \frac{\sin((u_1 - u) |l'| )}{|l'|} d\beta^{l'}_L (u) \right] \right|. \notag 
\end{align}
For any fixed $\rho \in (0,1]$, the first term of the last line can be dominated as
\begin{align}
&\left| \mathbb{E}^{\mathbb{P}_1} \left[ \int^{t}_{s} \frac{\sin((t - u) |l| )}{|l|} d\beta^l_L (u) \int^{u_1}_0 \frac{\sin((u_1 - u) |l'| )}{|l'|} d\beta^{l'}_L (u) \right] \right| \notag \\
&\lesssim \delta_{l, -l'} \int^{u_1}_{s} \frac{| \sin((t - u) |l|) |}{|l|^2} dL(u) \notag \\
&\lesssim \delta_{l, -l'} \int^{u_1}_{s} \frac{|t - u|^\rho}{|l|^{2 - \rho}} dL(u) \notag \\
&\lesssim \delta_{l, -l'} \frac{|t - s|^\rho}{|l|^{2 - \rho}} L(T) \notag
\end{align}
where we use the fact $|\sin \theta | \lesssim |\theta |^\rho$. 
Similarly, the second term can be dominated as
\begin{align}
& \left| \mathbb{E}^{\mathbb{P}_1} \left[ \int^{s}_0 \frac{\sin((t - u) |l| ) - \sin((s - u) |l| )}{|l|} d\beta^l_L (u) \int^{u_1}_0 \frac{\sin((u_1 - u) |l'| )}{|l'|} d\beta^{l'}_L (u) \right] \right| \notag \\
&\lesssim \delta_{l, -l'} \int^{s}_0 \frac{| \sin((t - u) |l| ) - \sin((s - u) |l| )|}{|l|^2} dL(u) \notag \\
&\lesssim \delta_{l, -l'} \int^{s }_0 \frac{|t - s|^\rho}{|l|^{2 - \rho}} dL(u) \lesssim \delta_{l, -l'}\frac{|t - s|^\rho}{|l|^{2 - \rho}} L(T). \notag
\end{align}
Therefore,
\[ \mathbb{E}^{\mathbb{P}_1} \left[ \left| \Delta_m \left( d_{s,t} \Psi_N^{\Diamond k}\right) \right|^2 \right] \lesssim |t - s|^\rho \sum_{|l_1|, \cdots , |l_k| \le N  } \rho_m ( l_1 + \cdots + l_k )^2 \frac{1}{| l_1 |^{2 - \rho} | l_2 |^2 \cdots |l_k |^2 } L(T)^k .\]
By combining with the inequality \eqref{kaka}, we obtain
\begin{align*}
&\mathbb{E}^{\mathbb{P}_1} \left[ \| d_{s,t} \Psi_N^{\Diamond k} \|_{B^{\alpha}_{2p,2p}(\mathbb{T}^2)}^{2p} \right] \\
&\lesssim \sum_{m\ge -1} 2^{2p\alpha m}\left( |t - s|^\rho \sum_{|l_1|, \cdots , |l_k| \le N } \rho_m ( l_1 + \cdots + l_k )^2 \frac{1}{| l_1 |^{2 - \rho} | l_2 |^2 \cdots |l_k |^2 } L(T)^k \right)^p \\
&= |t - s|^{\rho p} L(T)^{kp} \sum_{m\ge -1} 2^{2p\alpha m}\left(\sum_{|l_1|, \cdots , |l_k| \le N } \rho_m ( l_1 + \cdots + l_k )^2 \frac{1}{| l_1 |^{2 - \rho} | l_2 |^2 \cdots |l_k |^2 } \right)^p
\end{align*}
uniformly in $N$. By a similar argument, we obtain
\begin{align*}
&\mathbb{E}^{\mathbb{P}_1} \left[ \| d_{s,t} \left( \Psi_N^{\Diamond k} - \Psi_M^{\Diamond k}\right) \|_{B^{\alpha}_{2p,2p}(\mathbb{T}^2)}^{2p} \right] \\
&\lesssim |t - s|^{\rho p} L(T)^{kp} \sum_{m\ge -1} 2^{2p\alpha m}\left(\sum_{N<|l_1| \le M } \rho_m ( l_1 + \cdots + l_k )^2 \frac{1}{| l_1 |^{2 - \rho} | l_2 |^2 \cdots |l_k |^2 } \right)^p
\end{align*}
for any $M>N\ge 1$.
Therefore, by taking sufficiently large $p$, convergence in $L^p(\Omega_1)$ follows from Kolmogorov's theorem and the Besov embedding. The almost-sure convergence can also be shown by a similar argument to the proof of Proposition \ref{rere}.
\end{proof}
\begin{cor}
For any $T>0$ and $\omega_2 \in \Omega_2$ with $L(T-)(\omega_2) \coloneqq \lim_{t \nearrow T}L(t)(\omega_2) >0$, there holds $\Psi \notin L^2([0,T]\times \mathbb{T}^2)$ for $\mathbb{P}_1$-almost surely.
\end{cor}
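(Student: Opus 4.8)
The plan is to mimic the proof of Corollary \ref{coco}, replacing the heat objects by their wave counterparts. Fix $\omega_2\in\Omega_2$ with $L(T-)(\omega_2)>0$. Since $L$ is nondecreasing and $L(T-)=\sup_{t<T}L(t)$, there is $\delta\in(0,T)$ with $L(T-\delta)(\omega_2)>0$, which I fix. The object to track is the spatial mean of the renormalized square $\Psi_N^{\Diamond 2}(t)=\Psi_N(t)^2-c_N^W(t)$. Applying Proposition \ref{yy} with $k=2$ and $\alpha=-\epsilon<0$, $\Psi_N^{\Diamond 2}$ converges in $C([0,T];B^{-\epsilon}_{\infty,\infty}(\mathbb{T}^2))$, $\mathbb{P}_1$-almost surely. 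As $1\in C^\infty(\mathbb{T}^2)$ and $B^{-\epsilon}_{\infty,\infty}(\mathbb{T}^2)$ embeds continuously in $\mathcal{D}'(\mathbb{T}^2)$, the functional $g\mapsto\langle g,1\rangle$ is continuous on $B^{-\epsilon}_{\infty,\infty}$, so $\int_0^T\langle\Psi_N^{\Diamond 2}(t),1\rangle\,dt$ converges $\mathbb{P}_1$-a.s. to a finite limit as $N\to\infty$.

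Next I would make the left side explicit. Since $c_N^W(t)$ does not depend on $x$, one has $\langle\Psi_N^{\Diamond2}(t),1\rangle=c_1\|\Psi_N(t)\|_{L^2(\mathbb{T}^2)}^2-c_2\,c_N^W(t)$ with positive constants $c_1,c_2$ fixed by the normalization of $\langle\cdot,\cdot\rangle$. Thus the convergence just obtained forces $\|\Psi_N\|_{L^2([0,T]\times\mathbb{T}^2)}^2$ and $\int_0^T c_N^W(t)\,dt$ to differ by a $\mathbb{P}_1$-a.s. bounded amount, and it remains to check that $\int_0^T c_N^W(t)\,dt\to+\infty$. By Fubini's theorem and the substitution $u=(t-s)|l|$, for $l\neq 0$,
\[ \int_s^T\frac{\sin^2((t-s)|l|)}{|l|^2}\,dt=\frac{1}{|l|^2}\Bigl(\frac{T-s}{2}-\frac{\sin(2(T-s)|l|)}{4|l|}\Bigr)\ge\frac{T-s}{4|l|^2}\quad\text{whenever }(T-s)|l|\ge1. \]
Keeping only $|l|\ge1/\delta$ in the sum and only $s\in[0,T-\delta]$ in the $dL$-integral (all discarded terms being nonnegative), this yields
\[ \int_0^T c_N^W(t)\,dt\ \gtrsim\ \delta\,L(T-\delta)(\omega_2)\sum_{1/\delta\le|l|\le N}\frac{1}{|l|^2}\ \simeq\ \log N\ \to\ +\infty, \]
using $L(T-\delta)(\omega_2)>0$. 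Hence $\|\Psi_N\|_{L^2([0,T]\times\mathbb{T}^2)}^2\to+\infty$, $\mathbb{P}_1$-almost surely.

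To conclude, note that $\Psi_N(t)=P_N\Psi(t)$ is the Fourier truncation of $\Psi(t)$, which is how $\Psi$ is defined, as the $N\to\infty$ limit of the partial sums $\Psi_N$. On the event $\{\Psi\in L^2([0,T]\times\mathbb{T}^2)\}$ one has $\Psi(t,\cdot)\in L^2(\mathbb{T}^2)$ for a.e. $t\in[0,T]$ by Fubini, so Parseval gives $\|\Psi_N(t)\|_{L^2(\mathbb{T}^2)}\le\|\Psi(t)\|_{L^2(\mathbb{T}^2)}$ and thus $\|\Psi_N\|_{L^2([0,T]\times\mathbb{T}^2)}\le\|\Psi\|_{L^2([0,T]\times\mathbb{T}^2)}<\infty$, contradicting the divergence above. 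Therefore $\mathbb{P}_1(\Psi\in L^2([0,T]\times\mathbb{T}^2))=0$, which is the claim.

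I expect the main (and essentially only) obstacle compared with Corollary \ref{coco} to be the divergence $\int_0^T c_N^W(t)\,dt\to+\infty$: in the heat case $c_N^H$ is manifestly comparable to $\sum_{|l|\le N}|l|^{-2}$ after integrating the kernel $e^{2(s-t)|l|^2}$ in $t$, whereas here one must extract the fact that the time-average of the oscillatory kernel $\sin^2((t-s)|l|)$ stays bounded below by a positive constant once $(T-s)|l|\gtrsim1$, which is precisely the content of the displayed estimate. Everything else is a direct transcription of the heat argument.
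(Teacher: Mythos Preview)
Your proof is correct and follows exactly the approach the paper indicates (``similar to that of Corollary~\ref{coco}''): you show convergence of $\Psi_N^{\Diamond 2}$, verify that $\int_0^T c_N^W(t)\,dt$ diverges, and conclude by the Parseval contradiction. The only new work compared with the heat case is the lower bound on the time-average of $\sin^2((t-s)|l|)/|l|^2$, which you handle cleanly; the paper leaves this step entirely to the reader.
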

\begin{proof}
The proof is similar to that of Corollary \ref{coco}.
\end{proof}

\subsection{Stationary solutions of the linear equations}
In Sections 4.1 and 4.2, we dealt with the solutions of \eqref{rehe} and \eqref{rehe2} under the initial conditions $0$. However, by a slight modification of the equations, we can consider the stationary solutions under some assumption on the subordinator $L$. In the context of the usual Wick renormalization i.e. when $L(t)=t$, the advantage of considering the stationary solution of the linear equation is that the renormalization constant can be taken independently of $t$. In our situation, although it is hopeless to take renormalization constants $c_N^H$ and $c^W_N$ independently of $t$ due to the randomness of $L$, we can take $c_N^H$ and $c^W_N$ as $\mathbb{R}_+$-valued stationary processes.  

To consider the stationary solutions, we have to replace the massless Laplacian $\Delta$ with the massive one in \eqref{rehe} i.e. we consider the equation
\begin{gather}
\begin{cases}\label{dadan}
\partial_t  \Phi_N + (1-\Delta) \Phi_N =  P_N\partial_t{W_L}\\
\Phi_N(0)=\phi_0.
\end{cases}
\end{gather}
For the wave equation \eqref{rehe2}, replacing the Laplacian is not enough and we have to add the ``damping term'' $\partial_t\Psi_N$ to the equation i.e. we consider the stochastic linear damed wave equation
\begin{gather}
\begin{cases} \label{dadan2}
\partial_t ^2 \Psi_N + \partial_t\Psi_N + (1- \Delta) \Psi_N = P_N\partial_t{W_L} \\
(\Psi_N (0), \partial_t \Psi_N (0)) = (\psi_0, \psi_1). 
\end{cases}
\end{gather}
Moreover, in view of Theorem \ref{ou}, we assume that $L$ is a L\'evy process with nondecreasing paths satisfying
\begin{equation}\label{con}
\int_{[0.\infty )} \left(0\lor\log|x| \right) \rho(dx)<+\infty
\end{equation}
where $\rho$ is a L\'evy measure associated to $L$. More precisely, $\rho$ is determined by the L\'evy-Khintchine representation of $L$
\begin{equation}\notag
\mathbb{E}[ e^{\sqrt{-1} z L(1)}] =
\exp \left[ \left\{ \sqrt{-1} az  -\frac{1}{2} bz^2 + \int_{\mathbb{R}} \eta (z,x) \rho(dx)\right\}\right] 
\end{equation}
where $a \in \mathbb{R}, b \in \mathbb{R}_+$ and
\[ \eta (z,x) = e^{\sqrt{-1} z x} - 1 - \sqrt{-1}\frac{ zx }{1+|x|^2}. \]
Note that $W_L$ is a $\mathcal{D}'(\mathbb{T}^2)$-valued L\'evy process in this case as explained in Section 3.1.
In the following, we extend the time parameter $t$ of $W$ and $L$ to the whole line $\mathbb{R}$: Let $\{\tilde{W}_t\}_{t\in\mathbb{R}_+}$ be a cylindrical Brownian motion defined on $\Omega_1$ which is independent of $W$ and $\{\tilde{L_t}\}_{t\in\mathbb{R}_+}$ be a  L\'evy process with $L(1)\sim\tilde{L}(1)$ defined on $\Omega_2$ which is independent of $L$. Then, we extend $W$ and $L$ by
\[ W(t) = \tilde{W}(-t), \ L(t) = \tilde{L}(-t)\]
for $t<0$.
\begin{prop}\label{kj}
We assume \eqref{con}. Then, 
\begin{align}
\Phi_N^{stat.}(t) \coloneqq \int^t_{-\infty} e^{(t-s)(\Delta - 1)} P_N dW_L(s) = \frac{1}{2\pi}\sum_{|l|\le N} \int^t_{-\infty} e^{(s-t)(|l|^2+1)}d\beta^l_L(s)e_l
\end{align}
is a stationary solution of \eqref{dadan}.
Moreover, there holds
\[ \mathbb{E}^{\mathbb{P}_1}\left[ |\Phi_N^{stat.}(t,x)|^2 \right] = \frac{1}{(2\pi)^2}\sum_{|l|\le N} \int^t_{-\infty} e^{2(s-t)(|l|^2+1)}dL(s) < +\infty \ \ \ \mathbb{P}_2\mbox{-a.s.}. \] 
\end{prop}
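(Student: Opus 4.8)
\emph{Overview.} The plan is to build $\Phi_N^{stat.}$ as a $\mathbb{P}_1$-a.s.\ limit of Young integrals $\int_{-T}^{t}$ (for $\mathbb{P}_2$-a.e.\ realization of the two-sidedly extended subordinator), then to verify the Duhamel identity attached to \eqref{dadan}, read off stationarity from the stationary increments of $W_L$, and compute the second moment from Lemma \ref{lemm1}. The one place where genuine work is needed is the convergence of the integral at $-\infty$; everything else is routine.

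\emph{Construction.} I would fix $\omega_2\in\Omega_2$ and, for $|l|\le N$ and $T>0$, set $I_l^{(T)}:=\int_{-T}^{t}e^{(s-t)(|l|^2+1)}\,d\beta^l_L(s)$; this is a legitimate Young integral because $\beta^l_L$ has finite $(2+\epsilon)$-variation on $[-T,t]$ (Lemma \ref{vari} applied on $[-T,t]$), $s\mapsto e^{(s-t)(|l|^2+1)}$ is of bounded variation, and $\tfrac1{2+\epsilon}+1>1$. By additivity of the Young integral, $I_l^{(T)}-I_l^{(T')}=\int_{-T}^{-T'}e^{(s-t)(|l|^2+1)}\,d\beta^l_L(s)$ for $T>T'$, which by the interval-shifted version of Lemma \ref{lemm1} is, with $\omega_2$ frozen, a centered Gaussian on $\Omega_1$ of variance $\int_{-T}^{-T'}e^{2(s-t)(|l|^2+1)}\,dL(s)$. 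Since \eqref{con} together with Theorem \ref{ou} forces $\int_{-\infty}^{t}e^{2(s-t)(|l|^2+1)}\,dL(s)<+\infty$ for $\mathbb{P}_2$-a.e.\ $\omega_2$, these variances vanish as $T,T'\to\infty$; as the integer increments $I_l^{(j+1)}-I_l^{(j)}$ are independent (Brownian increments over disjoint time blocks, $L$ being fixed) with summable variances, Kolmogorov's theorem yields the $\mathbb{P}_1$-a.s.\ existence of $I_l:=\lim_{T\to\infty}I_l^{(T)}$, so that $\Phi_N^{stat.}(t):=\frac1{2\pi}\sum_{|l|\le N}I_l e_l$ is a well-defined trigonometric-polynomial-valued random variable, existing $\mathbb{P}$-a.s.\ by Fubini.

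\emph{Equation and stationarity.} For $t_0<t$ I would split $\int_{-\infty}^{t}=\int_{-\infty}^{t_0}+\int_{t_0}^{t}$ and use $e^{(t-t_0)(\Delta-1)}e_l=e^{-(t-t_0)(|l|^2+1)}e_l$ to obtain
\[ \Phi_N^{stat.}(t)=e^{(t-t_0)(\Delta-1)}\Phi_N^{stat.}(t_0)+\int_{t_0}^{t}e^{(t-s)(\Delta-1)}P_N\,dW_L(s), \]
which is exactly the mild-solution property for \eqref{dadan} issued from $\Phi_N^{stat.}(t_0)$ at every time $t_0$, hence $\Phi_N^{stat.}$ solves \eqref{dadan}. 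For stationarity I would use that, $L$ being L\'evy, $W_L$ extended to $t\in\mathbb{R}$ is a $\mathcal{D}'(\mathbb{T}^2)$-valued L\'evy process with stationary increments; since $\Phi_N^{stat.}$ is expressed as an integral against $dW_L$, the change of variable $s\mapsto s+h$ in the defining integral and the identity in law $(\beta^l_L(\cdot+h)-\beta^l_L(h))_{|l|\le N}\overset{d}{=}(\beta^l_L(\cdot))_{|l|\le N}$ give equality of finite-dimensional distributions of $(\Phi_N^{stat.}(t+h))_t$ and $(\Phi_N^{stat.}(t))_t$.

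\emph{Second moment and the main obstacle.} Finally, Lemma \ref{lemm1}, passed to the half-line by the monotone limit above, gives for fixed $\omega_2$ that the $I_l$, $|l|\le N$, are jointly centered Gaussian with $\mathbb{E}^{\mathbb{P}_1}[I_lI_{l'}]=\delta_{l,-l'}\int_{-\infty}^{t}e^{2(s-t)(|l|^2+1)}\,dL(s)$; plugging into $\Phi_N^{stat.}(t,x)=\frac1{2\pi}\sum_{|l|\le N}I_l e_l(x)$ and using that this quantity is real-valued and $e_l(x)e_{-l}(x)=1$ yields
\[ \mathbb{E}^{\mathbb{P}_1}\!\left[|\Phi_N^{stat.}(t,x)|^2\right]=\frac1{(2\pi)^2}\sum_{|l|\le N}\int_{-\infty}^{t}e^{2(s-t)(|l|^2+1)}\,dL(s), \]
which is finite $\mathbb{P}_2$-a.s.\ because each integrand is bounded by $e^{2(s-t)}$ and $\int_{-\infty}^{t}e^{2(s-t)}\,dL(s)<+\infty$ under \eqref{con}. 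The hard part is precisely this last input: the a.s.\ finiteness of $\int_{-\infty}^{t}e^{-\lambda(t-s)}\,dL(s)$ for every $\lambda>0$ under \eqref{con} is the Sato--Yamazato criterion for the existence of a stationary L\'evy-driven Ornstein--Uhlenbeck process, and it is exactly what Theorem \ref{ou} supplies; the remaining steps reduce to the Young-integral calculus of Section 3 and the Gaussian identities of Section 4.1.
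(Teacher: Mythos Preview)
Your proposal is correct. The main difference from the paper's proof lies in how stationarity is obtained. The paper views \eqref{dadan} as a finite-dimensional L\'evy-driven Ornstein--Uhlenbeck SDE on $\mathrm{span}\{e_l:|l|\le N\}$ and invokes Theorem \ref{ou} \emph{twice}: once for the subordinator $L$ (to get the $\mathbb{P}_2$-a.s.\ finiteness of $\int_{-\infty}^t e^{-\lambda(t-s)}\,dL(s)$ under \eqref{con}, exactly as you do), and a second time for the L\'evy process $P_N W_L$ itself, so that stationarity is read off directly from the equivalence of (ii) and (iii) in Theorem \ref{ou} once convergence in $\mathbb{P}$-probability of the improper integral has been established via Markov's inequality. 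You bypass this second appeal to Theorem \ref{ou} by verifying the Duhamel identity and the invariance under time shifts by hand, using the stationary increments of the two-sided L\'evy process $W_L$. Your route is longer but more self-contained, and it yields along the way the $\mathbb{P}_1$-a.s.\ (for fixed $\omega_2$) existence of the limiting integral via the independence of the Gaussian blocks $I_l^{(j+1)}-I_l^{(j)}$, which is stronger than the convergence in probability the paper settles for. The second-moment identity is handled identically in both arguments, via Lemma \ref{lemm1}.
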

\begin{proof}
We can see the SPDE \eqref{dadan} as the Ornstein-Uhlenbeck type SDE on the finite-dimensional subspace span$\{e_l\}_{|l|\le N}$.
By Theorem \ref{ou}, the first statement follows once we prove the existence of the integral
\begin{align}
\int^t_{-\infty} e^{(s-t)(|l|^2+1)} dB_L(s) \label{rcep}
\end{align}
for any $l$ and $\mathbb{R}$-valued standard Brownian motions $B$ defined on $\Omega_1$.
We fix $t$ and write
\[ A(u) \coloneqq \int^t_{u} e^{(s-t)(|l|^2+1)} dB_L(s) \]
for $u\in \mathbb{R}_-$.
By Lemma \ref{lemm1}, 
\begin{equation} \label{j}
\mathbb{E}^{\mathbb{P}_1} \left[ |A(u_1) - A(u_2)|^2 \right] = \int^{u_1}_{u_2} e^{2(s-t)(|l|^2+1)}dL(s) .
\end{equation}
By the assumption \eqref{con} and Theorem \ref{ou}, the right hand side of \eqref{j} converges to $0$ as $u_1,u_2 \rightarrow -\infty$.
Therefore, for any $\epsilon>0$,
\begin{align}
\mathbb{P}\left(|A(u_1)-A(u_2)|>\epsilon\right) \lesssim \mathbb{E}^{\mathbb{P}}\left[|A(u_1)-A(u_2)|^2 \wedge 1\right] = \mathbb{E}^{\mathbb{P}_2}\left[ \mathbb{E}^{\mathbb{P}_1}\left[|A(u_1)-A(u_2)|^2 \right]\wedge 1\right] \rightarrow 0
\end{align}
as $u_1,u_2 \rightarrow -\infty$, where we use the Markov inequality. Thus, the integral \eqref{rcep} exits.
The second statement follows from Lemma \ref{lemm1} and an easy computation.
\end{proof}
\begin{prop}\label{ash}
We assume \eqref{con}. Then, 
\begin{equation}
\Psi_N^{stat.}(t) \coloneqq \int^t_{-\infty} \mathcal{D}(t-s) P_NdW_L(s) = \frac{1}{2\pi}\sum_{|l|\le N} \int^t_{-\infty} e^{\frac{1}{2}(s-t)}\frac{\sin \left((t-s)\sqrt{\frac{3}{4}+|l|^2}\right)}{\sqrt{\frac{3}{4}+|l|^2}} d\beta^l_L(s)e_l
\end{equation}
is a stationary solution of \eqref{dadan2}, where we write
\[ \mathcal{D}(t) = e^{-\frac{1}{2}t}\frac{\sin \left(t\sqrt{\frac{3}{4}-\Delta}\right)}{\sqrt{\frac{3}{4}-\Delta}}.\]
Moreover, there holds
\[ \mathbb{E}^{\mathbb{P}_1}\left[ |\Psi_N^{stat.}(t,x)|^2 \right] = \frac{1}{(2\pi)^2}\sum_{|l|\le N} \int^t_{-\infty} e^{(s-t)}\frac{\sin^2 \left((t-s)\sqrt{\frac{3}{4}+|l|^2}\right)}{\frac{3}{4}+|l|^2}dL(s) < +\infty \ \ \ \mathbb{P}_2\mbox{-a.s.}. \] 
\end{prop}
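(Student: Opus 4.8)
The plan is to reproduce the argument of Proposition~\ref{kj}, the only genuinely new point being the identification of the fundamental solution of the damped wave operator. Since $P_N$ projects onto the finite-dimensional space $\mathrm{span}\{e_l\}_{|l|\le N}$, equation \eqref{dadan2} decouples: on the $l$-th Fourier mode it reads $\ddot Y^l+\dot Y^l+(1+|l|^2)Y^l=\dot\beta^l_L$, which, written as a first-order system for the pair $(Y^l,\dot Y^l)$, is a linear Ornstein--Uhlenbeck type SDE whose drift matrix has rows $(0,1)$ and $(-(1+|l|^2),-1)$. Its eigenvalues are $-\tfrac12\pm\sqrt{-1}\,\sqrt{\tfrac34+|l|^2}$, both with real part $-\tfrac12$ independently of $l$, and the $(1,2)$-entry of the corresponding matrix exponential is exactly $e^{-t/2}\sin\!\big(t\sqrt{3/4+|l|^2}\big)/\sqrt{3/4+|l|^2}$, i.e.\ the $l$-th Fourier multiplier of $\mathcal{D}(t)$. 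Hence $\Psi_N^{stat.}$ is precisely the first component of the candidate stationary solution (and, since $\mathcal{D}(0)=0$, differentiation in $t$ shows $\partial_t\Psi_N^{stat.}=\int^t_{-\infty}\mathcal{D}'(t-s)P_N\,dW_L(s)$ is its second component), so that Theorem~\ref{ou} applies to this finite-dimensional OU system once the integral below is shown to exist.

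First I would check that $\int^t_{-\infty}e^{(s-t)/2}\frac{\sin((t-s)\sqrt{3/4+|l|^2})}{\sqrt{3/4+|l|^2}}\,d\beta^l_L(s)$ is well defined for each $l$ and each one-dimensional Brownian motion, arguing exactly as in Proposition~\ref{kj}: setting $A(u)\coloneqq\int^t_u(\cdots)\,d\beta^l_L(s)$ for $u\in\mathbb{R}_-$, Lemma~\ref{lemm1} gives, for $u_2<u_1<t$,
\[ \mathbb{E}^{\mathbb{P}_1}\big[|A(u_1)-A(u_2)|^2\big]=\int^{u_1}_{u_2}e^{(s-t)}\frac{\sin^2\!\big((t-s)\sqrt{3/4+|l|^2}\big)}{3/4+|l|^2}\,dL(s)\lesssim e^{-t}\int^{u_1}_{u_2}e^{s}\,dL(s). \]
Under the log-moment assumption \eqref{con}, Theorem~\ref{ou} ensures (as in Proposition~\ref{kj}) that the right-hand side tends to $0$ as $u_1,u_2\to-\infty$ $\mathbb{P}_2$-a.s.; the Markov inequality together with Fubini in $(\omega_1,\omega_2)$ then yields that $A(u)$ is Cauchy in $\mathbb{P}$-probability, hence the integral exists. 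With this in hand, Theorem~\ref{ou} gives that the associated OU process is a stationary solution of \eqref{dadan2}, whose first component is $\Psi_N^{stat.}$.

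For the variance identity I would apply Lemma~\ref{lemm1} once more, with $f=g$ equal to the ($l$-th mode, truncated and then limiting) integrand: it gives $\mathbb{E}^{\mathbb{P}_1}[|\hat\Psi_N^{stat.}(l,t)|^2]=\int^t_{-\infty}e^{(s-t)}\frac{\sin^2((t-s)\sqrt{3/4+|l|^2})}{3/4+|l|^2}\,dL(s)$ and $\mathbb{E}^{\mathbb{P}_1}[\hat\Psi_N^{stat.}(l,t)\hat\Psi_N^{stat.}(l',t)]=0$ for $l'\neq-l$. Expanding $|\Psi_N^{stat.}(t,x)|^2$ in the Fourier basis and taking $\mathbb{E}^{\mathbb{P}_1}$ exactly as in the computation after Lemma~\ref{lemm1} and in Proposition~\ref{kj}, the surviving terms $l'=-l$ contribute $e_0\equiv1$ and produce the stated sum, with the prefactor $(2\pi)^{-2}$ coming from the $(2\pi)^{-1}$ in the definition of $\Psi_N^{stat.}$; finiteness $\mathbb{P}_2$-a.s.\ is again a consequence of $\int^t_{-\infty}e^{s-t}\,dL(s)<+\infty$.

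The only real obstacle is the algebraic step of diagonalizing the damped wave evolution: one has to recognize that adding the damping term $\partial_t\Psi_N$ is precisely what pushes the spectrum of the generator into $\{\mathrm{Re}<0\}$, uniformly over the finitely many modes $|l|\le N$, so that the kernel $\mathcal{D}(t-s)$ decays like $e^{-(t-s)/2}$ and the integral from $-\infty$ converges under \eqref{con}. Once this structure is in place, every remaining estimate is a verbatim repetition of the proof of Proposition~\ref{kj}.
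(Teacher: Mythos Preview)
Your proposal is correct and follows essentially the same route as the paper: both reduce to a mode-by-mode analysis, prove existence of the improper integral by the Cauchy-in-probability argument of Proposition~\ref{kj} via Lemma~\ref{lemm1} and assumption~\eqref{con}, and read off the variance from Lemma~\ref{lemm1}. The only difference in presentation is that you rewrite \eqref{dadan2} as a first-order Ornstein--Uhlenbeck system and invoke Theorem~\ref{ou} directly, whereas the paper writes down the Duhamel formula $\Psi_N(t)=\partial_t\mathcal{D}(t)\psi_0+\mathcal{D}(t)(\psi_0+\psi_1)+\int_0^t\mathcal{D}(t-s)P_N\,dW_L(s)$, chooses the initial data $\psi_0^{stat.}=\int_{-\infty}^0\mathcal{D}(-s)P_N\,dW_L(s)$ and $\psi_1^{stat.}=\int_{-\infty}^0\partial_t\mathcal{D}(-s)P_N\,dW_L(s)$, and checks that this collapses to $\int_{-\infty}^t\mathcal{D}(t-s)P_N\,dW_L(s)$; your first-order reduction is arguably the cleaner way to justify why this choice of initial data produces a stationary process.
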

\begin{proof}
By the Duhamel principle, the mild solution of the Cauchy problem \eqref{dadan2} is written by
\begin{equation}
\Psi_N (t) = \partial_t \mathcal{D}(t) \psi_0 + \mathcal{D}(t) (\psi_0 + \psi_1) + \int^t_0 \mathcal{D}(t-s) P_N dW_L(s)
\end{equation}
where we write 
\[ \partial_t \mathcal{D}(t) = e^{-\frac{1}{2}t} \left( \cos \left( t \sqrt{\frac{3}{4}-\Delta} \right) - \frac{\sin \left( t \sqrt{\frac{3}{4}-\Delta} \right)}{2\sqrt{\frac{3}{4}-\Delta}} \right).\]
Letting
\[ \psi_0^{stat.} = \int^0_{-\infty} \mathcal{D}(-s) P_NdW_L(s) \]
and
\[ \psi_1^{stat.} = \int^0_{-\infty} \partial_t \mathcal{D}(-s) P_NdW_L(s), \]
(The existence of the integrals follows from a similar argument to the proof of Proposition \ref{kj}.) we define
\begin{equation}
\Psi_N^{stat.} (t) \coloneqq \partial_t \mathcal{D}(t) \psi_0^{stat.} + \mathcal{D}(t) (\psi_0^{stat.} + \psi_1^{stat.}) + \int^t_0 \mathcal{D}(t-s) P_N dW_L(s).
\end{equation}
Then, it is easily checked that 
\begin{equation}
\Psi_N^{stat.} (t) = \int^t_{-\infty} \mathcal{D}(t-s) P_N dW_L(s)
\end{equation}
and $\Psi_N^{stat.}$ is a stationary solution of \eqref{dadan2}.
\end{proof}
It is easy to see that
\[ c^{H, stat.}_N(t) \coloneqq \frac{1}{(2\pi)^2}\sum_{|l|\le N} \int^t_{-\infty} e^{2(s-t)(|l|^2+1)}dL(s)\]
and
\[ c^{W, stat.}_N(t) \coloneqq \frac{1}{(2\pi)^2}\sum_{|l|\le N} \int^t_{-\infty} e^{(s-t)}\frac{\sin^2 \left((t-s)\sqrt{\frac{3}{4}+|l|^2}\right)}{\frac{3}{4}+|l|^2}dL(s)\]
are $\mathbb{R}_+$-valued stationary stochastic processes and we can show that
\begin{enumerate}
\item
For $\Phi^{stat.}_N$ in Proposition \textup{\ref{kj}} and $c^{H, stat.}_N$, the statement of Proposition \textup{\ref{rere}} holds.
\item
For $\Psi^{stat.}_N$ in Proposition \textup{\ref{ash}} and $c^{W, stat.}_N$, the statement of Proposition \textup{\ref{yy}} holds.
\end{enumerate}
The precise statement is as follows.

\begin{prop}
We assume \eqref{con} and fix $T > 0$ and $k \in \mathbb{Z}_{>0}$. We define the renormalized powers of $\Phi^{stat.}_N$ and $\Psi^{stat.}_N$ by 
\[ \Phi^{stat. \Diamond k}_N \coloneqq H_k (\Phi^{stat.}_N;c^{H, stat.}_N)\quad \mbox{and} \quad \Psi^{stat. \Diamond k}_N \coloneqq H_k (\Psi^{stat.}_N;c^{W, stat.}_N). \]
Then, the following statements hold:
\begin{enumerate}
\item
For any $0<\epsilon<\frac{1}{k},\ \alpha<-\epsilon k,\ 0<\gamma < \frac{2}{(1-\epsilon)k}$, $p\ge 1$, and fixed $\omega_2 \in \Omega_2$, $\Phi_N^{stat. \Diamond k} (\omega_1, \omega_2)$ converges in $L^p \left( \Omega_1; L^\gamma ([0,T];B^\alpha_{\infty, \infty}(\mathbb{T}^2)) \right)$ and $\mathbb{P}_1$-almost surely.
In particular, $\Phi_N^{stat. \Diamond k}$ converges in $L^\gamma ([0,T];B^\alpha_{\infty, \infty}(\mathbb{T}^2))$ $\mathbb{P}$-almost surely.
\item
For any $\alpha<0$, $p\ge 1$, and fixed $\omega_2 \in \Omega_2$, $\Psi_N^{stat. \Diamond k} (\omega_1, \omega_2)$ converges in $L^p \left( \Omega_1; C ([0,T];B^\alpha_{\infty, \infty}(\mathbb{T}^2)) \right)$ and $\mathbb{P}_1$-almost surely.
In particular, $\Psi_N^{stat. \Diamond k}$ converges in $C ([0,T];B^\alpha_{\infty, \infty}(\mathbb{T}^2))$ $\mathbb{P}$-almost surely.
\end{enumerate}
\end{prop}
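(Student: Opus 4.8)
The plan is to treat the proofs of Propositions \ref{rere} and \ref{yy} as self-contained templates and merely check that the stationary processes feed into them. Those proofs use only three inputs: that, for each fixed $\omega_2\in\Omega_2$, the field $\Phi_N$ (resp.\ $\Psi_N$) is centered Gaussian in $\omega_1$ with pointwise variance equal to the renormalizing constant, so that Lemma \ref{gauss} yields the chaos orthogonality of Lemma \ref{lemm2}; the covariance formula of Lemma \ref{lemm1}; and the resulting kernel bounds
\[
\mathbb{E}^{\mathbb{P}_1}\big[\hat\Phi_N(l,t)\hat\Phi_N(-l,t)\big]\lesssim \frac{1}{|l|^{2-2\epsilon}}\int_0^t\frac{dL(s)}{(t-s)^{1-\epsilon}},\qquad
\mathbb{E}^{\mathbb{P}_1}\big[\hat\Psi_N(l,t)\hat\Psi_N(-l,t)\big]\lesssim \frac{L(T)}{|l|^{2}},
\]
together with their two-time increment analogues. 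For the stationary versions the first two inputs are immediate: by the construction in Propositions \ref{kj} and \ref{ash}, $\Phi^{stat.}_N(t,x)$ and $\Psi^{stat.}_N(t,x)$ are $L^2(\mathbb{P}_1)$-limits of the Gaussian random variables $\int_{-u_0}^{t}\cdots\,d\beta^l_L$ (Gaussian by Lemma \ref{lemm1}), hence themselves Gaussian, and those propositions already identify their variances as $c^{H,stat.}_N(t)$ and $c^{W,stat.}_N(t)$; the same limiting procedure extends Lemma \ref{lemm1} to the half line and gives
\[
\mathbb{E}^{\mathbb{P}_1}\big[\hat\Phi^{stat.}_N(l,t)\hat\Phi^{stat.}_N(-l,t)\big]=\int_{-\infty}^{t}e^{2(s-t)(|l|^2+1)}\,dL(s),
\]
and likewise for $\Psi^{stat.}_N$ with kernel $e^{(s-t)}\sin^2\!\big((t-s)\sqrt{3/4+|l|^2}\big)/(3/4+|l|^2)$. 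Granting the third input, every remaining step of the two proofs — hypercontractivity, the telescoping of $\Phi^{stat.\Diamond k}_N-\Phi^{stat.\Diamond k}_M$ via $H_k(a+b;c)=\sum_l\binom kl a^{k-l}H_l(b;c)$, the Besov embedding (Lemma \ref{emb}), Kolmogorov's theorem in the wave case, and the Chebyshev/Borel--Cantelli upgrade followed by Fubini in $\omega_2$ — carries over word for word.

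So the only genuinely new work is to re-establish the kernel and increment bounds when the integral runs from $-\infty$. The plan is to split $\int_{-\infty}^t=\int_{t-1}^t+\int_{-\infty}^{t-1}$. On $s\in[t-1,t]$ one uses $e^{2(s-t)(|l|^2+1)}\le e^{-(t-s)|l|^2}$ and $\sup_{x\ge0}x^{1-\epsilon}e^{-x}<\infty$ to reproduce the near-diagonal bound $\int_{t-1}^t e^{2(s-t)(|l|^2+1)}dL(s)\lesssim |l|^{-(2-2\epsilon)}\int_{t-1}^t(t-s)^{-(1-\epsilon)}dL(s)$, and the resulting contribution $\int_0^T\big(\int_{t-1}^t(t-s)^{-(1-\epsilon)}dL(s)\big)^{k\gamma/2}dt$ is finite by the very computation used in the proof of Proposition \ref{rere}, the $dL$-mass over each window $[t-1,t]$ (for the two-sided process $L$) being finite. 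On the tail $s\le t-1$ one has $t-s\ge1$, hence $e^{2(s-t)(|l|^2+1)}\lesssim |l|^{-(2-2\epsilon)}e^{2(s-t)}$, and the crucial point is that for fixed $\omega_2$ the kernel $t\mapsto\int_{-\infty}^{t-1}e^{2(s-t)}\,dL(s)$ is bounded on $[0,T]$: splitting once more at $s=-1$, the piece $\int_{-1}^{t-1}$ is dominated by $L(T)<\infty$ since $e^{2(s-t)}\le1$ there, while $\int_{-\infty}^{-1}e^{2(s-t)}dL(s)\le e^{2}\int_{-\infty}^{-1}e^{2s}dL(s)$ is finite $\mathbb{P}_2$-a.s.\ by assumption \eqref{con} and Theorem \ref{ou}. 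Hence $\int_0^T\big(\int_{-\infty}^{t-1}e^{2(s-t)}dL(s)\big)^{k\gamma/2}dt<\infty$ for every $\gamma$, and the heat estimate closes. For the wave case the same splitting applied to $\int_{-\infty}^{t}e^{(s-t)}\tfrac{\sin^2((t-s)\sqrt{3/4+|l|^2})}{3/4+|l|^2}dL(s)\le |l|^{-2}\int_{-\infty}^t e^{(s-t)}dL(s)$ shows that this quantity is bounded in $t$ and plays the role of $|l|^{-2}L(T)$, the increment bounds being produced from $|\sin\theta|\lesssim|\theta|^\rho$ and $|\sin a-\sin b|\lesssim|a-b|^\rho$ exactly as in Proposition \ref{yy}, with the extra damping $e^{-\frac12(t-u)}$ only helping.

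The main obstacle is precisely this estimate on the half-line kernels over a compact time interval: one must reconcile the near-diagonal singularity $(t-s)^{-(1-\epsilon)}dL(s)$, which is unbounded in $t$ but becomes integrable only after the outer $\int_0^T dt$ (as in Proposition \ref{rere}), with integrability at $s=-\infty$, which is exactly what condition \eqref{con} delivers through Theorem \ref{ou}; the interplay of these two mechanisms is what forces the $s=t-1$ cutoff. I expect nothing else to be subtle, so I would organize the write-up as follows: first recall that $\Phi^{stat.}_N,\Psi^{stat.}_N$ are Gaussian with the stated variances; then record the half-line covariance formulas; then prove the kernel and increment bounds via the $\int_{-1}^{t-1}+\int_{-\infty}^{-1}$ splitting above; and finally invoke the proofs of Propositions \ref{rere} and \ref{yy} verbatim, with $\Phi^{stat.}_N,\Psi^{stat.}_N,c^{H,stat.}_N,c^{W,stat.}_N$ in place of $\Phi_N,\Psi_N,c^H_N,c^W_N$.
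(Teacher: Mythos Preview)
Your proposal is correct and follows the same approach as the paper, which simply asserts that ``the proof is essentially same as the proof of Propositions \ref{rere} and \ref{yy}.'' You in fact supply more detail than the paper does: you correctly isolate the single new technical point---controlling the half-line kernel $\int_{-\infty}^{t}\cdots\,dL(s)$ uniformly for $t\in[0,T]$---and your near/far splitting at $s=t-1$, using the exponential damping on the tail together with condition \eqref{con} and Theorem \ref{ou}, is exactly the mechanism that makes the reduction to Propositions \ref{rere} and \ref{yy} go through.
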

The proof is essentially same as the proof of Propositions \ref{rere} and \ref{yy}.

\section{Construction of the local solutions}
By applying Propositions \ref{rere} and \ref{yy}, we can prove the local well-posedness of the renormalized equation.

\subsection{Local well-posedness of the stochastic nonlinear heat equations}
In this subsection, we consider the following renormalized equation:
\begin{align} \label{la}
v(t) = e^{t\Delta} u_0 \pm \sum^k_{j=0} \binom{k}{l}\int^t_0 e^{(t-s)\Delta} v^j (s) \Phi^{\Diamond (k-j)} (s) ds
\end{align}
Note that it is the mild formulation of the random PDE 
\begin{gather}
\begin{cases} 
(\partial_t - \Delta )v =  \sum^k_{j=0} \binom{k}{l} v^j \Phi^{\Diamond (k-j)} \\
v(0) = u_0 \ .
\end{cases} 
\end{gather}
For $k\ge 3$, the term $\int^t_0 e^{(t-s)\Delta} \Phi^{\Diamond k} (s) ds$ in \eqref{la} does not make sense due to the lack of time-integrability of $\Phi^{\Diamond k}$. Indeed, we can only show $\Phi^{\Diamond k} \in L^\gamma ([0,T];B^\alpha_{\infty,\infty}(\mathbb{T}^2))$ for $\gamma<1$ in view of Proposition \ref{rere}. 
Because of this situation, we only consider the case $k=2$:
\begin{align} \label{la2}
v(t) = e^{t\Delta} u_0 + \int^t_0 e^{(t-s)\Delta} v^2 (s)  ds +  2\int^t_0 e^{(t-s)\Delta} v (s) \Phi (s) ds + \int^t_0 e^{(t-s)\Delta} \Phi^{\Diamond 2} (s) ds
\end{align}
By Proposition \ref{rere}, it is sufficient to consider the deterministic equation
\begin{equation}\label{aaa}
v(t) = e^{t\Delta} u_0 + \int^t_0 e^{(t-s)\Delta} v^2 (s)  ds +  2\int^t_0 e^{(t-s)\Delta} v (s) \Xi_1 (s) ds + \int^t_0 e^{(t-s)\Delta} \Xi_2 (s) ds
\end{equation}
for a given data $(\Xi_1, \Xi_2)\in L^{\frac{2}{1-\epsilon}}([0,T];B^{-\epsilon}_{\infty,\infty}(\mathbb{T}^2)) \times L^{\frac{1}{1-\epsilon}}([0,T];B^{-2\epsilon}_{\infty,\infty}(\mathbb{T}^2))$ with $0<\epsilon<\frac{1}{2}$.

To solve \eqref{aaa}, we derive the following type of Schauder estimate.
\begin{lemm}\label{al}
Let
\[ u(t) = e^{t\Delta}u_0 + \int^t_0 e^{(t-s)\Delta}f(s) ds .\]
Then, for any $\epsilon>0, \theta \in \mathbb{R}, r_1\in(1,\infty]$, and $r_2,r_3\in[1,\infty)$ with $1+\frac{1}{r_2} = \frac{1}{r_1} + \frac{1}{r_3}$, there holds
\[ \| u \|_{L^{r_2} (\left[0,T\right];B^{\theta+2/r_3-\epsilon}_{p,q} ) \cap C([0,T];B^{\theta+2(1-1/r_1)-\epsilon}_{p,q})} \lesssim \|u_0 \|_{B^{\theta+2/r_3 - \epsilon}_{p,q}} + \| f \|_{L^{r_1}([0,T];B^\theta_{p,q})} .\]
\end{lemm}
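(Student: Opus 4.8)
The statement is a space-time Schauder (maximal-regularity-type) estimate for the heat semigroup, and the plan is to reduce it to two standard ingredients. The first is the smoothing bound $\|e^{\tau\Delta}g\|_{B^{\beta}_{p,q}}\lesssim\tau^{-(\beta-\alpha)/2}\|g\|_{B^{\alpha}_{p,q}}$, valid for $\tau>0$ and $\beta\ge\alpha$, together with the uniform bound $\|e^{\tau\Delta}g\|_{B^{\alpha}_{p,q}}\lesssim\|g\|_{B^{\alpha}_{p,q}}$ (which holds because $e^{\tau\Delta}$ is a contraction on $L^p(\mathbb{T}^2)$ and commutes with the Littlewood--Paley blocks); see \cite{bcd} for these facts. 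The second is Young's inequality for convolutions in the time variable. Throughout one may assume $2/r_3>\epsilon$, and $2(1-1/r_1)>\epsilon$ when $r_1<\infty$, since in the opposite case the target Besov exponent drops at or below $\theta$ and the corresponding estimate is immediate from the embedding $B^{\theta}_{p,q}\hookrightarrow B^{\theta'}_{p,q}$ for $\theta'\le\theta$.

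First I would dispose of the homogeneous part $e^{t\Delta}u_0$. For the $L^{r_2}$-in-time part, the uniform boundedness of $e^{t\Delta}$ on $B^{\theta+2/r_3-\epsilon}_{p,q}$ gives $\|e^{\cdot\,\Delta}u_0\|_{L^{r_2}([0,T];B^{\theta+2/r_3-\epsilon}_{p,q})}\lesssim T^{1/r_2}\|u_0\|_{B^{\theta+2/r_3-\epsilon}_{p,q}}$. For the $C$-in-time part, the point is that, by the constraint $1+1/r_2=1/r_1+1/r_3$, the target regularity satisfies $\theta+2(1-1/r_1)-\epsilon=\theta+2/r_3-\epsilon-2/r_2$, strictly below $\theta+2/r_3-\epsilon$ since $r_2<\infty$; hence $t\mapsto e^{t\Delta}u_0$ is norm-continuous into $B^{\theta+2(1-1/r_1)-\epsilon}_{p,q}$ on $[0,T]$, including at $t=0$. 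This last point I would prove by a standard low/high-frequency splitting: in the Littlewood--Paley decomposition of $\|(e^{t\Delta}-\mathrm{Id})u_0\|_{B^{\theta+2(1-1/r_1)-\epsilon}_{p,q}}$, the high-frequency tail beyond scale $M$ is bounded uniformly in $t$ by $2^{-2M/r_2}\|u_0\|_{B^{\theta+2/r_3-\epsilon}_{p,q}}\to0$, while each of the finitely many remaining blocks tends to $0$ as $t\to0$ via $\|(e^{t\Delta}-\mathrm{Id})\Delta_m u_0\|_{L^p}\lesssim t\,2^{2m}\|\Delta_m u_0\|_{L^p}$.

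The core of the argument is the Duhamel term $w(t):=\int_0^t e^{(t-s)\Delta}f(s)\,ds$. For the $L^{r_2}$ estimate, the smoothing bound with $\beta-\alpha=2/r_3-\epsilon\ge0$ yields
\[
\|w(t)\|_{B^{\theta+2/r_3-\epsilon}_{p,q}}\lesssim\int_0^t(t-s)^{-(1/r_3-\epsilon/2)}\,\|f(s)\|_{B^{\theta}_{p,q}}\,ds .
\]
The kernel $K(\tau)=\tau^{-(1/r_3-\epsilon/2)}$ lies in $L^{r_3}((0,T))$ because $r_3(1/r_3-\epsilon/2)=1-r_3\epsilon/2<1$, and the constraint $1+1/r_2=1/r_3+1/r_1$ is exactly the exponent relation for Young's inequality, so $\|w\|_{L^{r_2}([0,T];B^{\theta+2/r_3-\epsilon}_{p,q})}\lesssim\|K\|_{L^{r_3}((0,T))}\,\|f\|_{L^{r_1}([0,T];B^{\theta}_{p,q})}$. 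For the $C$ estimate, the same smoothing bound with gap $2(1-1/r_1)-\epsilon$ gives $\|w(t)\|_{B^{\theta+2(1-1/r_1)-\epsilon}_{p,q}}\lesssim\int_0^t(t-s)^{-(1-1/r_1-\epsilon/2)}\|f(s)\|_{B^{\theta}_{p,q}}\,ds$; the new kernel lies in $L^{r_1'}((0,T))$, $r_1'$ the conjugate exponent of $r_1$, since $r_1'(1-1/r_1-\epsilon/2)=1-r_1'\epsilon/2<1$, and Hölder in $s$ gives $\sup_{t\le T}\|w(t)\|_{B^{\theta+2(1-1/r_1)-\epsilon}_{p,q}}\lesssim\|f\|_{L^{r_1}([0,T];B^{\theta}_{p,q})}$. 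Time-continuity of $w$ into this space I would then obtain from the splitting $w(t)-w(s)=(e^{(t-s)\Delta}-\mathrm{Id})w(s)+\int_s^t e^{(t-\tau)\Delta}f(\tau)\,d\tau$ for $s<t$: the second term tends to $0$ as $t-s\to0$ by absolute continuity of the $\tau$-integral in the bound just proved, and the first by strong continuity of the semigroup, using the $\epsilon$-margin (one first bounds $w(s)$ in the slightly higher space $B^{\theta+2(1-1/r_1)-\epsilon/2}_{p,q}$, which the same computation delivers).

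Adding the two contributions, and using $\theta+2(1-1/r_1)-\epsilon\le\theta+2/r_3-\epsilon$ to estimate the $C$-norm at the lower regularity, completes the proof. I do not expect a genuine obstacle: the only care needed is the exponent bookkeeping — verifying that each power of the time kernel stays \emph{strictly} below the relevant integrability threshold, which it always does precisely because $\epsilon>0$ (this $\epsilon$-loss is the familiar endpoint loss in Schauder estimates) — and the time-continuity, at $t=0$ and in general, where norm-continuity at the top regularity $\theta+2/r_3-\epsilon$ must not be expected and one instead exploits the strict regularity loss built into the statement.
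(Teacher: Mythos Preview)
Your proposal is correct and follows essentially the same approach as the paper: both arguments rest on the heat semigroup smoothing bound $\|e^{\tau\Delta}g\|_{B^{s+2\delta}_{p,q}}\lesssim\tau^{-\delta}\|g\|_{B^{s}_{p,q}}$ and then apply Young's convolution inequality in the time variable (using the exponent relation $1+1/r_2=1/r_1+1/r_3$) to control the $L^{r_2}$-in-time norm of the Duhamel term. The only difference is that the paper outsources the $C([0,T];B^{\theta+2(1-1/r_1)-\epsilon}_{p,q})$ estimate to an external reference (\cite[Proposition~A.3]{exp}), whereas you spell it out via H\"older's inequality with the conjugate exponent $r_1'$ and verify time-continuity directly through a low/high-frequency splitting and the decomposition $w(t)-w(s)=(e^{(t-s)\Delta}-\mathrm{Id})w(s)+\int_s^t e^{(t-\tau)\Delta}f(\tau)\,d\tau$.
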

\begin{proof}
Let $p,q\in [1,\infty]$. We use the following estimate: 
For any $\delta \ge 0$ and $s\in\mathbb{R}$,
\[\|e^{t\Delta}u\|_{B^{s+2\delta}_{p,q}} \lesssim t^{-\delta}\|u\|_{B^s_{p,q}}\] uniformly in $t\in[0,T]$.
For the proof of this, see  \cite[Proposition 5]{weber}.
By this estimate, 
\[ \|e^{t\Delta}u_0\|_{B^{\theta+2/r_3-\epsilon}_{p,q}} \lesssim \|u_0\|_{B^{\theta+2/r_3-\epsilon}_{p,q}}\] and
\[ \left\|\int^t_0 e^{(t-s)\Delta}f(s) ds\right\|_{B^{\theta+2/r_3-\epsilon}_{p,q}} \lesssim \int^t_0 (t-s)^{-\left(\frac{1}{r_3}-\frac{\epsilon}{2}\right)} \|f(s)\|_{B^\theta_{p,q}} ds .\]
Therefore, by Young's inequality, 
\begin{align*}
\left\| \int^t_0 e^{(t-s)\Delta}f(s) ds \right\|_{L^{r_2} (\left[0,T\right];B^{\theta+2/r_3-\epsilon}_{p,q} )} &\lesssim \| t \mapsto t^{-\left(\frac{1}{r_3}-\frac{\epsilon}{2}\right)}\|_{L^{r_3}[0,T]}\|f\|_{L^{r_1}([0,T];B^\theta_{p,q})} \\
&\lesssim \|f\|_{L^{r_1}([0,T];B^\theta_{p,q})}.
\end{align*}
For the estimate on $C([0,T];B^{\theta+2(1-1/r_1)-\epsilon}_{p,q})$, see \cite[Proposition A.3]{exp}.
\end{proof}
Now, we can solve the equation \eqref{la2}.
\begin{prop}\label{fo}
For given $\Xi_1 \in L^{\frac{2}{1-\epsilon}}([0,T];B^{-\epsilon}_{\infty,\infty}(\mathbb{T}^2)), \Xi_2 \in L^{\frac{1}{1-\epsilon}}([0,T];B^{-2\epsilon}_{\infty,\infty}(\mathbb{T}^2))$ with $ 0<\epsilon<\frac{1}{2}$, there exists some small $T>0$ such that the equation \eqref{la2} has a unique solution in
\[L^\gamma (\left[0,T\right];B^{2/\gamma-\delta}_{\infty,\infty}(\mathbb{T}^2) ) \cap C([0,T];B^{-\delta}_{\infty,\infty}(\mathbb{T}^2))\]
for any $\frac{2}{1-\epsilon}<\gamma<\frac{2}{\epsilon}$, $0<\delta<\frac{2}{\gamma}-\epsilon$ and initial conditions $u_0 \in B^{2/\gamma-\delta}_{\infty,\infty}(\mathbb{T}^2)$.
\end{prop}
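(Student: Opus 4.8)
\medskip
\noindent\emph{Proof plan.}
The plan is to solve \eqref{aaa} by a contraction-mapping argument in the Banach space
\[ E_T \coloneqq L^\gamma\big([0,T];B^{2/\gamma-\delta}_{\infty,\infty}(\mathbb{T}^2)\big) \cap C\big([0,T];B^{-\delta}_{\infty,\infty}(\mathbb{T}^2)\big), \]
with $\|v\|_{E_T} = \|v\|_{L^\gamma([0,T];B^{2/\gamma-\delta}_{\infty,\infty})} + \|v\|_{C([0,T];B^{-\delta}_{\infty,\infty})}$; this is a genuine norm because $\gamma > \tfrac{2}{1-\epsilon} > 2$. Let $\Gamma v$ denote the right-hand side of \eqref{aaa}. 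For the free term, the standard smoothing bound $\|e^{t\Delta}u_0\|_{B^s_{\infty,\infty}} \lesssim \|u_0\|_{B^s_{\infty,\infty}}$ (the $\delta=0$ instance of the estimate recalled in the proof of Lemma \ref{al}) together with $2/\gamma-\delta>-\delta$ gives $e^{t\Delta}u_0 \in E_T$ with $\|e^{t\Delta}u_0\|_{E_T}\lesssim \|u_0\|_{B^{2/\gamma-\delta}_{\infty,\infty}}$ (continuity at $t=0$ in $B^{-\delta}_{\infty,\infty}$ being standard). It then remains to bound the three Duhamel integrals in $E_T$, gaining a strictly positive power of $T$ so that the fixed point closes for small $T$.

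For each Duhamel term I would estimate the integrand in space by the product estimate (Lemma \ref{pro}) and then apply the Schauder estimate (Lemma \ref{al}) in time. Since $0<\delta<\tfrac2\gamma-\epsilon$ we have $\tfrac2\gamma-\delta>\epsilon>0$, so Lemma \ref{pro} gives $\|v^2\|_{B^{2/\gamma-\delta}_{\infty,\infty}}\lesssim\|v\|_{B^{2/\gamma-\delta}_{\infty,\infty}}^2$, hence $v^2\in L^{\gamma/2}([0,T];B^{2/\gamma-\delta}_{\infty,\infty})$; Lemma \ref{al} with $\theta=\tfrac2\gamma-\delta$, $r_1=\tfrac\gamma2$, $r_2=\gamma$, $r_3=\tfrac{\gamma}{\gamma-1}$ then lands $\int_0^t e^{(t-s)\Delta}v^2(s)\,ds$ in $L^\gamma([0,T];B^{2-\delta-\sigma}_{\infty,\infty})\cap C([0,T];B^{2-2/\gamma-\delta-\sigma}_{\infty,\infty})\hookrightarrow E_T$ for $\sigma>0$ small (using $\gamma>2$). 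Likewise, $(\tfrac2\gamma-\delta)+(-\epsilon)>0$ lets Lemma \ref{pro} give $\|v\Xi_1\|_{B^{-\epsilon}_{\infty,\infty}}\lesssim\|v\|_{B^{2/\gamma-\delta}_{\infty,\infty}}\|\Xi_1\|_{B^{-\epsilon}_{\infty,\infty}}$, so $v\Xi_1\in L^r([0,T];B^{-\epsilon}_{\infty,\infty})$ with $\tfrac1r=\tfrac1\gamma+\tfrac{1-\epsilon}{2}$; Lemma \ref{al} with $\theta=-\epsilon$, $r_1=r$, $r_2=\gamma$, $r_3=\tfrac{2}{1+\epsilon}$ yields an element of $L^\gamma([0,T];B^{1-\sigma}_{\infty,\infty})\cap C([0,T];B^{1-2/\gamma-\sigma}_{\infty,\infty})\hookrightarrow E_T$, where the inclusion $1-\sigma\ge\tfrac2\gamma-\delta$ uses precisely $\gamma>\tfrac{2}{1-\epsilon}$ (equivalently $\tfrac2\gamma<1-\epsilon$). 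Finally, Lemma \ref{al} applied directly to $\Xi_2\in L^{1/(1-\epsilon)}([0,T];B^{-2\epsilon}_{\infty,\infty})$ with $\theta=-2\epsilon$, $r_1=\tfrac{1}{1-\epsilon}$, $r_2=\gamma$, $r_3=\tfrac{\gamma}{1+\gamma\epsilon}$ gives an element of $L^\gamma([0,T];B^{2/\gamma-\sigma}_{\infty,\infty})\cap C([0,T];B^{-\sigma}_{\infty,\infty})\hookrightarrow E_T$ for $\sigma\le\delta$. In each case one checks $r_1\in(1,\infty]$, $r_3\in[1,\infty)$ and $1/r_3<1$, so the time weight $(t-s)^{-(1/r_3-\sigma/2)}$ is integrable and, tracking the $T$-dependence in the proof of Lemma \ref{al}, the Duhamel estimate carries the factor $\|t\mapsto t^{-(1/r_3-\sigma/2)}\|_{L^{r_3}[0,T]}\simeq T^{\sigma/2}$; thus each Duhamel term gains $T^{\kappa}$ with $\kappa=\sigma/2>0$.

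Collecting these bounds yields $\|\Gamma v\|_{E_T}\le C\|u_0\|_{B^{2/\gamma-\delta}_{\infty,\infty}}+CT^\kappa\big(\|v\|_{E_T}^2+\|v\|_{E_T}\|\Xi_1\|_{L^{2/(1-\epsilon)}([0,T];B^{-\epsilon}_{\infty,\infty})}+\|\Xi_2\|_{L^{1/(1-\epsilon)}([0,T];B^{-2\epsilon}_{\infty,\infty})}\big)$, and, using $v^2-w^2=(v-w)(v+w)$ and the linearity in the unknown of the remaining Duhamel terms, the Lipschitz bound $\|\Gamma v-\Gamma w\|_{E_T}\le CT^\kappa\big(\|v\|_{E_T}+\|w\|_{E_T}+\|\Xi_1\|_{L^{2/(1-\epsilon)}([0,T];B^{-\epsilon}_{\infty,\infty})}\big)\|v-w\|_{E_T}$. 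Fixing $R\coloneqq 2C\|u_0\|_{B^{2/\gamma-\delta}_{\infty,\infty}}+1$ and then choosing $T=T(\omega)>0$ small enough (depending on $R$, $\|\Xi_1\|$, $\|\Xi_2\|$), $\Gamma$ maps the closed ball of radius $R$ in $E_T$ into itself and is a contraction there, so Banach's fixed point theorem gives a unique fixed point in that ball; a standard continuation/absorption argument upgrades this to uniqueness in all of $E_T$. Specializing $\Xi_1=\Phi$, $\Xi_2=\Phi^{\Diamond2}$, which lie in the required spaces $\mathbb{P}$-a.s.\ by Proposition \ref{rere}, then yields Theorem \ref{adtta}.

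I expect the main difficulty to lie entirely in the bookkeeping of exponents: one must simultaneously arrange that every spatial product has strictly positive summed regularity so that Lemma \ref{pro} applies (this is what forces $\delta<\tfrac2\gamma-\epsilon$), that the resulting time-Lebesgue exponents are admissible in Lemma \ref{al} with an integrable time singularity (producing the crucial factor $T^\kappa$), and that after the parabolic smoothing the output still has regularity at least $\tfrac2\gamma-\delta$ in its $L^\gamma$-component and at least $-\delta$ in its $C$-component (this is what forces $\tfrac{2}{1-\epsilon}<\gamma<\tfrac2\epsilon$, the lower bound on $\gamma$ ensuring $\tfrac2\gamma<1-\epsilon$). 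Checking that one consistent choice of all these exponents, together with a sufficiently small loss $\sigma$ in Lemma \ref{al}, can be made to respect every strict inequality is the delicate point; the functional-analytic skeleton (product estimate, Schauder estimate, Banach fixed point) is otherwise routine.
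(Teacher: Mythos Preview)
Your proposal is correct and follows the same overall strategy as the paper: set up a contraction in $X^{\gamma,\delta}(T)=L^\gamma([0,T];B^{2/\gamma-\delta}_{\infty,\infty})\cap C([0,T];B^{-\delta}_{\infty,\infty})$, use Lemma~\ref{pro} for the products and Lemma~\ref{al} for the Duhamel integrals, and close by Banach's fixed point theorem. The one tactical difference is in how the small power of $T$ is produced. The paper first pushes each nonlinear integrand down to the single space $L^{1/(1-\epsilon)}([0,T];B^{-2\epsilon}_{\infty,\infty})$ using Lemma~\ref{pro} together with H\"older's inequality in time (this H\"older step is what manufactures the explicit factors $T^{1-\epsilon-2/\gamma}$ and $T^{(1-\epsilon)/2-1/\gamma}$, and is exactly where the lower bound $\gamma>\tfrac{2}{1-\epsilon}$ bites), and then applies Lemma~\ref{al} once with $r_1=\tfrac{1}{1-\epsilon}$, $\theta=-2\epsilon$. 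You instead apply Lemma~\ref{al} to each term with its own parameters, land at higher regularity than needed, and extract the $T^{\sigma/2}$ from the $\sigma$-loss built into the Schauder estimate. Both routes are valid; the paper's bookkeeping is slightly more economical (one Schauder application), while yours makes the parabolic smoothing more visible. One small inaccuracy: the inclusion $1-\sigma\ge\tfrac2\gamma-\delta$ that you flag only requires $\gamma>2$, not $\gamma>\tfrac{2}{1-\epsilon}$; in your arrangement the latter condition is not sharply used (it enters in the paper's proof precisely to make the H\"older-in-time exponents positive).
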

\begin{proof}
We define the map
\[ v \mapsto \Gamma(v) \coloneqq e^{t\Delta} u_0 + \int^t_0 e^{(t-s)\Delta} v^2 (s)  ds +  2\int^t_0 e^{(t-s)\Delta} v (s) \Xi_1 (s) ds + \int^t_0 e^{(t-s)\Delta} \Xi_2 (s) ds  \]
and let $X^{\gamma,\delta}(T) \coloneqq L^\gamma (\left[0,T\right];B^{2/\gamma-\delta}_{\infty,\infty}(\mathbb{T}^2) ) \cap C([0,T];B^{-\delta}_{\infty,\infty}(\mathbb{T}^2))$.
By Lemmas \ref{al} and \ref{pro}, there holds
\begin{align*}
\|e^{t\Delta}u_0\|_{X^{\gamma,\delta}(T)} \lesssim \|u_0\|_{B^{2/\gamma-\delta}_{\infty,\infty}(\mathbb{T}^2)},
\end{align*}

\begin{align*}
\left\|\int^t_0 e^{(t-s)\Delta} v^2 (s)  ds\right\|_{X^{\gamma,\delta}(T)} \lesssim \|v^2\|_{L^{\frac{1}{1-\epsilon}}([0,T];B^{-2\epsilon}_{\infty,\infty}(\mathbb{T}^2))} &\lesssim \|v^2\|_{L^{\frac{1}{1-\epsilon}}([0,T];B^{\epsilon}_{\infty,\infty}(\mathbb{T}^2))} \\
&\lesssim \|v\|^2_{L^{\frac{2}{1-\epsilon}}([0,T];B^{\epsilon}_{\infty,\infty}(\mathbb{T}^2))} \\
&\lesssim T^{1-\epsilon- \frac{2}{\gamma}}  \|v\|^2_{L^{\gamma}([0,T];B^{\frac{2}{\gamma}-\delta}_{\infty,\infty}(\mathbb{T}^2))},
\end{align*}

\begin{align*}
\left\|\int^t_0 e^{(t-s)\Delta} v (s) \Xi_1 (s) ds \right\|_{X^{\gamma,\delta}(T)} &\lesssim \|v\Xi_1\|_{L^{\frac{1}{1-\epsilon}}([0,T];B^{-2\epsilon}_{\infty,\infty}(\mathbb{T}^2))} \\
&\lesssim \|v\|_{L^{\frac{2}{1-\epsilon}}([0,T];B^{\frac{2}{\gamma}-\delta}_{\infty,\infty}(\mathbb{T}^2))} \|\Xi_1\|_{L^{\frac{2}{1-\epsilon}}([0,T];B^{-\epsilon}_{\infty,\infty}(\mathbb{T}^2))} \\
&\lesssim T^{\frac{1-\epsilon}{2}-\frac{1}{\gamma}}\|v\|_{L^{\gamma}([0,T];B^{\frac{2}{\gamma}-\delta}_{\infty,\infty}(\mathbb{T}^2))} \|\Xi_1\|_{L^{\frac{2}{1-\epsilon}}([0,T];B^{-\epsilon}_{\infty,\infty}(\mathbb{T}^2))},
\end{align*}
and
\begin{align*}
\left\|\int^t_0 e^{(t-s)\Delta} \Xi_2 (s) ds\right\|_{X^{\gamma,\delta}(T)} \lesssim \|\Xi_2\|_{L^{\frac{1}{1-\epsilon}}([0,T];B^{-2\epsilon}_{\infty,\infty}(\mathbb{T}^2))}
\end{align*}
where for the second and third term, we use the H\"older's inequality with respect to $t$ and the condition $\epsilon<2/\gamma -\delta$.
Therefore, 
\[ \|\Gamma(v)\|_{X^{\gamma,\delta}(T)} \lesssim 1 + T^{\frac{1-\epsilon}{2}-\frac{1}{\gamma}}\|v\|_{X^{\gamma,\delta}(T)} + T^{1-\epsilon- \frac{2}{\gamma}}\|v\|^2_{X^{\gamma,\delta}(T)}.\]
Moreover, by a similar argument, one can show that
\[ \|\Gamma(v_1) - \Gamma(v_2)\|_{X^{\gamma,\delta}(T)} \lesssim T^{\frac{1-\epsilon}{2}-\frac{1}{\gamma}}\|v_1 -v_2 \|_{X^{\gamma,\delta}(T)} + T^{1-\epsilon- \frac{2}{\gamma}}( \|v_1\|_{X^{\gamma,\delta}(T)} + \|v_2\|_{X^{\gamma,\delta}(T)})\|v_1 - v_2 \|_{X^{\gamma,\delta}(T)} .\]
Thus, there exists some $R_0(u_0,\Xi_1,\Xi_2)>0$ such that for any $R\ge R_0$, there exists some $T=T(R,u_0,\Xi_1,\Xi_2)>0$ such that $\Gamma$ is a contraction map on the closed ball $B_R \subset X^{\gamma,\delta}(T)$. Therefore, the statement follows from Banach's fixed point theorem.
\end{proof}

By applying Proposition \ref{fo} pathwisely, we obtain the local well-posedness of the equation \eqref{la2} and Theorem \ref{adtta} follows.

\subsection{Local well-posedness of the stochastic nonlinear wave equations}
We consider the equation
\begin{equation}\label{qw}
v(t) = \cos(t|\nabla|) u_0 + \frac{\sin(t|\nabla|)}{|\nabla|} u_1 \pm \sum^k_{j=0} \binom{k}{l}\int^t_0 \frac{\sin((t - s)|\nabla|)}{|\nabla|} v^j(s) \Psi^{\Diamond (k-j)}(s)ds. 
\end{equation}
Note that it is the mild formulation of the random PDE
\begin{gather}
\begin{cases}
(\partial_t^2 - \Delta)v = \sum^k_{j=0} \binom{k}{l} v^j \Psi^{\Diamond (k-j)} \\
(v,\partial_tv)|_{t=0} = (u_0,u_1) .
\end{cases}
\end{gather}

In view of the time-space regularity of $\Psi^{\Diamond k}$ in Proposition \ref{yy} and Remark \ref{hv}, we consider the deterministic wave equation
\begin{equation}\label{qwq}
v(t) = \cos(t|\nabla|) u_0 + \frac{\sin(t|\nabla|)}{|\nabla|} (u_0 + u_1) \pm \sum^k_{j=0} \binom{k}{l}\int^t_0 \frac{\sin((t - s)|\nabla|)}{|\nabla|} v^j(s) \Xi_{k-j}(s)ds 
\end{equation}
for a given data $\Xi = (\Xi_0,\Xi_1,\cdots,\Xi_k) \in C([0,T];W^{-\epsilon,\infty}(\mathbb{T}^2))^{\otimes k+1}$ with $\epsilon>0$ and $\Xi_0 = 1$.
This deterministic PDE is already considered in \cite{wave} and we can apply their result.
\begin{thm}{\textup{(cf. \cite{wave})}}\label{mnm}
Let $\epsilon>0$ be sufficiently small. Then, \eqref{qwq} has a unique local-in-time solution in the space
$C([0,T];H^{1-\epsilon}(\mathbb{T}^2)) \cap C^1([0,T];H^{-\epsilon}(\mathbb{T}^2))$
for any initial conditions $(u_0,u_1)\in H^{1-\epsilon}(\mathbb{T}^2)\times H^{-\epsilon}(\mathbb{T}^2)$.
\end{thm}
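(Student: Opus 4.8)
This statement is, for our purposes, the deterministic local well-posedness result of \cite{wave} applied to the random data $\Xi$ produced by Proposition \ref{yy} and Remark \ref{hv}, so the plan is simply to reproduce that proof via a contraction argument; I only indicate the mechanism. Fix an $s$-wave-admissible pair $(q,r)$ with $s=1-\epsilon$ (in the sense of Remark \ref{scifi}) and set
\[ Z_T := C([0,T];H^{1-\epsilon}(\mathbb{T}^2)) \cap C^1([0,T];H^{-\epsilon}(\mathbb{T}^2)) \cap L^q([0,T];L^r(\mathbb{T}^2)), \]
and let $\Gamma$ be the map sending $v$ to the right-hand side of \eqref{qwq}. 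A fixed point of $\Gamma$ in $Z_T$ is the desired solution, and uniqueness in the smaller class $C([0,T];H^{1-\epsilon})\cap C^1([0,T];H^{-\epsilon})$ then follows by the usual comparison argument.

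First I would record the linear estimates. The homogeneous wave propagators map $H^{1-\epsilon}\times H^{-\epsilon}$ boundedly into $C([0,T];H^{1-\epsilon})\cap C^1([0,T];H^{-\epsilon})$ by the energy identity, and into $L^q([0,T];L^r(\mathbb{T}^2))$ by the homogeneous Strichartz estimate for the two-dimensional wave equation. For the Duhamel part, the operator $\mathcal{I}F(t):=\int_0^t \frac{\sin((t-s)|\nabla|)}{|\nabla|}F(s)\,ds$ gains one spatial derivative,
\[ \|\mathcal{I}F\|_{C([0,T];H^{1-\epsilon})\cap C^1([0,T];H^{-\epsilon})} \lesssim \int_0^T \|F(s)\|_{H^{-\epsilon}}\,ds, \]
while the inhomogeneous Strichartz estimate controls $\|\mathcal{I}F\|_{L^q([0,T];L^r)}$ by a dual space-time norm of $F$.

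It then remains to estimate, for $0\le j\le k$, the forcing $F_j:=v^j\,\Xi_{k-j}$, where $\Xi_{k-j}\in C([0,T];W^{-\epsilon,\infty}(\mathbb{T}^2))=C([0,T];B^{-\epsilon}_{\infty,\infty}(\mathbb{T}^2))$ has negative spatial regularity, so one must place $v^j$ in a space of spatial regularity strictly above $\epsilon$. In two dimensions $H^{1-\epsilon}\hookrightarrow L^m$ for every $m<\infty$, so combining this embedding with the extra space-time integrability carried by the Strichartz component $v\in L^q([0,T];L^r)$ and the fractional Leibniz rule one bounds $v^j$ in a norm of the type $L^a([0,T];W^{\sigma,b}(\mathbb{T}^2))$ with $\epsilon<\sigma$ and $b$ suitably chosen. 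A paraproduct decomposition, in the spirit of Lemma \ref{pro}, then gives the pointwise-in-time product estimate
\[ \|v^j(s)\,\Xi_{k-j}(s)\|_{H^{-\epsilon}} \lesssim \|v^j(s)\|_{W^{\sigma,b}}\,\|\Xi_{k-j}(s)\|_{B^{-\epsilon}_{\infty,\infty}}, \]
and H\"older in time extracts a positive power of $T$. Feeding this bound and its difference analogue (needed for the contraction property) into the linear estimates above shows that $\Gamma$ maps a ball of $Z_T$ into itself and is a contraction there for $T=T(\|(u_0,u_1)\|_{H^{1-\epsilon}\times H^{-\epsilon}},\|\Xi\|)$ small enough; Banach's fixed point theorem then concludes, and the resulting solution lies in particular in $C([0,T];H^{1-\epsilon})\cap C^1([0,T];H^{-\epsilon})$.

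The main obstacle is the nonlinear estimate of the previous paragraph. Since $1-\epsilon<\frac{d}{2}=1$, the space $H^{1-\epsilon}(\mathbb{T}^2)$ is neither a Banach algebra nor embedded in $L^\infty$, so the high powers $v^j$ cannot be handled within the energy class alone: the Strichartz space-time integrability is genuinely needed, and one must choose the admissible pair $(q,r)$ and interpolate carefully so that, for every $j\le k$, the product $v^j\Xi_{k-j}$ still lands in $L^1([0,T];H^{-\epsilon})$ with a small surplus of integrability producing the power of $T$. This balancing is exactly what is carried out in \cite{wave}.
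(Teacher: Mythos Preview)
Your proposal is correct and follows exactly the approach the paper takes: the paper gives no self-contained proof of this theorem but simply refers to \cite[Section~3]{wave} (and to Remark~\ref{scifi}), and your sketch is precisely the contraction argument carried out there---working in the Strichartz-augmented space $Z_T$, using the energy/Strichartz linear estimates, and closing with paraproduct bounds on $v^j\Xi_{k-j}$.
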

See \cite[Section 3]{wave} for the proof. See also Remark \ref{scifi}. By applying Theorem \ref{mnm} pathwisely, we obtain the local well-posedness of the equation \eqref{qw} and Theorem \ref{adta} follows.

\section{Appendix}
\subsection{Young integral}
We recall the basics of the Young integral. Let $P [a, b] \coloneqq \{ D = \{ a = t_0 < t_1 < \cdots t_N = b \} \}$
be the set of partitions of $[a, b]$ and let
\begin{equation}\label{revise} 
V^p [a, b] \coloneqq \{ f: [a, b] \rightarrow \mathbb{R}; \| f \|_{p,[a, b]} < \infty \} 
\end{equation}
be the space of paths with the finite $p$-variation
where 
\begin{equation}\label{vvv}
\| f \|_{p,[a, b]} \coloneqq \left( \sup_{D \in P[a, b]} \sum^N_{i = 1} |f(x_{t_i}) - f(x_{t_{i -1}}) |^p \right)^{\frac{1}{p}}. 
\end{equation}
For $D = \{ a = t_0 < t_1 < \cdots t_N = b \} \in P[a,b]$ and $f, g:[a,b] \rightarrow \mathbb{R}$, we write $|D| \coloneqq \sup_{1\le i \le N} |t_i - t_{i-1}|$ and
\[I(f,g;D)_{a,b} \coloneqq \sum^N_{i=1} f(t_{i-1}) \left( g(t_i) - g(t_{i-1}) \right) .\]
Then, we define the integral by
\[ \int^b_a f(t) dg(t) \coloneqq \lim_{|D| \rightarrow 0} I(f,g,D)_{a,b} \]
if the limit exists.

\begin{lemm}[cf. \cite{young}] \label{young}
For any $f \in V^p [a,b] \cap C[a,b]$ and $g \in V^q [a,b]$ with $p,q \ge 1, \frac{1}{p} + \frac{1}{q} >1$, the integral $\int^b_a f(t) dg(t)$ exists. Morover, there holds
\[ \left| \int^b_a f(t) dg(t) - f(a)\left( g(b) - g(a) \right) \right| \lesssim_{p,q} \| f \|_{p,[a,b]}  \| g \|_{q,[a,b]} .\]
In particular. there holds
\[ \left| \int^b_a f(t) dg(t) \right| \lesssim_{p,q} \left( \| f \|_{C[a,b]} + \| f \|_{p,[a,b]} \right)  \| g \|_{q,[a,b]} .\]
\end{lemm}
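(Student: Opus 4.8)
The plan is to establish the so-called Young maximal inequality first---a bound on the Riemann sums $I(f,g;D)_{a,b}$ that is uniform over all partitions $D$---and then to upgrade it to convergence of the net $\big(I(f,g;D)_{a,b}\big)$ as $|D|\to0$, which is where the continuity of $f$ enters. Throughout I write $\theta\coloneqq\frac1p+\frac1q>1$ and $\omega(s,t)\coloneqq\|f\|_{p,[s,t]}^p+\|g\|_{q,[s,t]}^q$, which is superadditive, $\omega(s,u)+\omega(u,t)\le\omega(s,t)$ for $s<u<t$, since $p$-th and $q$-th powers of $p$- and $q$-variations split under concatenation of partitions.

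\textbf{Step 1 (uniform bound).} Given $D=\{a=t_0<\dots<t_r=b\}$ with $r\ge2$, deleting an interior point $t_j$ changes the Riemann sum by exactly $(f(t_j)-f(t_{j-1}))(g(t_{j+1})-g(t_j))$, whose modulus is at most $\omega(t_{j-1},t_{j+1})^{\theta}$. The intervals $[t_{j-1},t_{j+1}]$ with $j$ odd have pairwise disjoint interiors, and likewise for $j$ even, so superadditivity gives $\sum_{j=1}^{r-1}\omega(t_{j-1},t_{j+1})\le2\,\omega(a,b)$, hence some interior point can be deleted at cost $\le\big(2\omega(a,b)/(r-1)\big)^{\theta}$. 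Removing points one by one down to $\{a,b\}$ and summing the costs yields
\[ \big|I(f,g;D)_{a,b}-f(a)(g(b)-g(a))\big|\le\big(2\omega(a,b)\big)^{\theta}\sum_{n\ge1}n^{-\theta}=\zeta(\theta)\big(2\omega(a,b)\big)^{\theta}, \]
the series converging because $\theta>1$. Replacing $(f,g)$ by $(\lambda f,\mu g)$ with $\lambda=\|f\|_{p,[a,b]}^{-1}$, $\mu=\|g\|_{q,[a,b]}^{-1}$ and dividing by $\lambda\mu$ converts this into $\big|I(f,g;D)_{a,b}-f(a)(g(b)-g(a))\big|\le4^{\theta}\zeta(\theta)\,\|f\|_{p,[a,b]}\|g\|_{q,[a,b]}$, valid for every $D$ (the cases where $\|f\|_{p,[a,b]}$ or $\|g\|_{q,[a,b]}$ vanishes being trivial by telescoping).

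\textbf{Step 2 (convergence and the estimates).} It suffices to control $I(f,g;D)_{a,b}-I(f,g;D_1)_{a,b}$ for a refinement $D\supset D_1$, since any two partitions can be compared through their common refinement. Splitting over the intervals $I_i=[t_{i-1},t_i]$ of $D_1$ and using $g(t_i)-g(t_{i-1})=\sum_k(g(s_k)-g(s_{k-1}))$ over the points $s_k$ of $D$ inside $I_i$, the $i$-th discrepancy equals $I\big(f-f(t_{i-1}),g;D|_{I_i}\big)$, to which Step 1 applies on $I_i$ with $f-f(t_{i-1})$ vanishing at the left endpoint. Choosing $p'\in(p,q')$ (possible precisely because $\theta>1$), so that still $\frac1{p'}+\frac1q>1$, and interpolating $\|f-f(t_{i-1})\|_{p',I_i}\le\big(2\,\mathrm{osc}(f;I_i)\big)^{1-p/p'}\|f\|_{p,I_i}^{p/p'}$, then summing over $i$ by Hölder's inequality and superadditivity (the exponents close up since $\frac1{p'}+\frac1q\ge1$), I would obtain
\[ \big|I(f,g;D)_{a,b}-I(f,g;D_1)_{a,b}\big|\lesssim_{p,q}\ \rho(|D_1|)^{1-p/p'}\,\|f\|_{p,[a,b]}^{p/p'}\|g\|_{q,[a,b]}, \]
where $\rho(\delta)\coloneqq\sup_{|s-t|\le\delta}|f(s)-f(t)|$. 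Since $f$ is uniformly continuous on $[a,b]$, $\rho(|D_1|)\to0$, so the net $\big(I(f,g;D)_{a,b}\big)$ is Cauchy and the integral exists. Passing to the limit in the bound of Step 1 gives the first displayed estimate of the lemma, and since $|g(b)-g(a)|\le\|g\|_{q,[a,b]}$, the triangle inequality gives the ``in particular'' estimate.

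\textbf{Main obstacle.} Step 1 is purely combinatorial and robust; the delicate point is Step 2, namely producing \emph{smallness} rather than mere boundedness of $I(D)-I(D_1)$. The maximal inequality is scale invariant and never decays on its own, so one is forced to trade a sliver of the $p$-variation of $f$ for its modulus of continuity through the interpolation inequality at the slightly larger exponent $p'$; checking that $p'$ can be chosen in the admissible range and that the Hölder exponents in the summation step are legitimate is the bookkeeping one must get right.
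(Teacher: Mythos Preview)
The paper does not actually prove this lemma; it merely cites \cite[Theorem~1.16]{young}. Your argument is correct and is precisely the classical Young--Lyons proof that appears in that reference: the ``point-removal'' (sewing) argument with a superadditive control $\omega$ to get the maximal inequality (your Step~1), and then the interpolation $\|f\|_{p',I}\le(2\,\mathrm{osc}(f;I))^{1-p/p'}\|f\|_{p,I}^{p/p'}$ at a slightly larger exponent $p'\in(p,q')$ to convert boundedness into a Cauchy estimate governed by the modulus of continuity of $f$ (your Step~2). The bookkeeping you flag as the ``main obstacle'' is handled correctly: $p'<q'$ guarantees $\tfrac{1}{p'}+\tfrac{1}{q}>1$, so Step~1 applies on each subinterval, and the H\"older/embedding step in the summation over $i$ closes because $\ell^q\hookrightarrow\ell^s$ for $s\ge q$ with $\tfrac{1}{s}=1-\tfrac{1}{p'}$. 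So there is nothing to compare beyond noting that you have supplied the proof the paper outsources.
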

\begin{proof}
See \cite[Theorem 1.16]{young}, for example.
\end{proof}

\begin{rem}
We only assume the continuity for $f$, not for $g$. In fact, more generally, the integral is well-defined for any $f \in V^p [a,b]$ and $g \in V^q [a,b]$ with no common discontinuity points.
\end{rem}

\subsection{Remark on the solution of the linear stochastic heat equation}
Let 
\[ \Phi_N (t) = \frac{1}{2\pi} \sum_{|l| \le N} \int^t_0 e^{(s-t)|l|^2} d\beta^l_L (s) e_l. \]
We define
$D([0,T];X) \coloneqq \{ f:[0,T] \rightarrow X ; f \mbox{ is right continuous with left limits.} \}$. 
In the following, we give to $D([0,T];X)$ the topology of uniform convergence instead of the Skorokhod topology.

\begin{prop}\label{tui} Let $T>0$.
As $N\rightarrow \infty$, $\Phi_N$ converges to some $\Phi$ in $D([0,T];H^{-1-\epsilon}(\mathbb{T}^2))$ in probability for any $\epsilon>0$.
\end{prop}
\begin{proof}
From Lemma 6.1, it is easy to see that $\int^\cdot_0  e^{(s-\cdot)|l|^2} d\beta^l_L(s) \in D([0,T];\mathbb{C})$ for any $l\in\mathbb{Z}^2$. Therefore, it is also easy to check that $\Phi_N\in D([0,T];H^\alpha(\mathbb{T}^2))$ for any $\alpha\in\mathbb{R}$ and $N\in\mathbb{N}$.\\
By Lemma 6.1, there holds that
\begin{align*}
\mathbb{E}^{\mathbb{P}_1} \left[ \sup_{t\in[0,T]} \| \Phi_N - \Phi_M \|_{H^{\alpha}(\mathbb{T}^2)}^2 \right]
&= \mathbb{E}^{\mathbb{P}_1} \left[ \sup_{t\in[0,T]} \left\| \sum_{N<|l|\leq M, l\in\mathbb{Z}^2} \int^t_0 \langle l \rangle^{\alpha} e^{(s-t)|l|^2} d\beta^l_L(s) e_l \right\|_{L^2 (\mathbb{T}^2)}^2 \right] \\
&= \mathbb{E}^{\mathbb{P}_1} \left[ \sup_{t\in[0,T]} \sum_{N<|l|\leq M}  \langle l \rangle^{2\alpha}\left| \int^t_0 e^{(s-t)|l|^2} d\beta^l_L(s) \right|^2  \right] \\
&\lesssim \mathbb{E}^{\mathbb{P}_1} \left[ \sup_{t\in[0,T]} \sum_{N<|l|\leq M}  \langle l \rangle^{2\alpha}\left(1 + \| e^{(\cdot - t)|l|^2}\|_{\frac{4}{3}, [0,t]}\right)^2 \| \beta^l_L \|^2_{3,[0,t]} \right] \\
&\lesssim \sum_{N< |l| \leq M} \langle l \rangle^{2\alpha} \mathbb{E}^{\mathbb{P}_1} \left[\| \beta^l_L \|^2_{3,[0,T]}\right].
\end{align*}
Therefore, from Lemma 3.1, we can see that $\Phi_N$ converges in $H^{-1-\epsilon}(\mathbb{T}^2)$ uniformly in $t\in [0,T]$ in probability as $N\rightarrow \infty$.
\end{proof}

\begin{rem}
Unlike the wave case, $\Phi$ does not have time-continuity when $L$ has jumps. Indeed, from Lemma 6.1, one can show that
\begin{align*}
\Phi_N (t) - \Phi_N (s) \rightarrow P_N \left[ W_L (t) - W_L (t-) \right] \ \ \ \mbox{as}\ s \nearrow t
\end{align*}
in $H^\alpha (\mathbb{T}^2)$ for each $\alpha\in\mathbb{R}$ and $N \in \mathbb{N}$. Therefore, from Proposition \ref{tui}, there holds that
\[ \Phi (t) - \Phi (t-) = \lim_{N \rightarrow \infty} \left( \Phi_N (t) - \Phi_N (t-) \right) = W_L(t)-W_L(t-). \]
From this equality, we can also say that $\Phi$ does not take values in $L^\infty ([0,T];H^\alpha(\mathbb{T}^2))$ for $\alpha \geq -1$ because the cylindrical Wiener process $W$ takes values in $H^{\alpha}(\mathbb{T}^2)$ for $\alpha < -1$. That is, the spacial regularity of $\Phi$ is much worse than $\Psi$. See also Proposition \ref{rere} with $k=1$, in which we estimate the regularity of $\Phi$ in the $L^p$-space with respect to time variable $t$ for finite $p<\infty$.
 
\end{rem}

\subsection{Stationary Ornstein-Uhlenbeck process}
Let $Q\in \mathbb{R}^d\otimes\mathbb{R}^d$ be a $d\times d$ matrix with its all eigenvalues have (strictly) positive real parts and let $\{X_t\}_{t\in\mathbb{R}}$ be an $\mathbb{R}^d$-valued L\'evy process with the characteristic function
\[ \mathbb{E}[ e^{\sqrt{-1}\langle z, X_t-X_s\rangle}] =
\exp \left[ (t-s) \left\{ \sqrt{-1}\langle a,z \rangle -\frac{1}{2}\langle Bz,B\rangle + \int_{\mathbb{R}^d} \eta (z,x) \rho(dx)\right\}\right] \]
where $a\in\mathbb{R}^d$, $B\in \mathbb{R}^d\otimes\mathbb{R}^d$ is a non-negative symmetric matrix, $\rho$ is a L\'evy measure on $\mathbb{R}^d$, and
\[ \eta (z,x) = e^{\sqrt{-1}\langle z, x \rangle } - 1 - \sqrt{-1}\frac{\langle z, x\rangle }{1+|x|^2}. \] 
Note that $X_t$ is defined not only for $t\in\mathbb{R}_+$ but also for $t\in\mathbb{R}_{-}$.
Then, we consider the following stochastic differential equation driven by $X$:
\begin{equation}\label{asa}
dY_t = -QY_{t}dt + dX_t
\end{equation}
By applying Ito's formula, the solution $Y$ of \eqref{asa} on $\mathbb{R}_+$ is given by
\[ Y_t = e^{-tQ}Y_0 + \int^t_0 e^{-(t-u)Q}dX_u .\]
This type of processes are called Ornstein-Uhlenbeck processes.
\begin{thm}[cf. Theorem 4.1 of \cite{ju}]\label{ou}
The following conditions are equivalent.
\begin{enumerate}
\item
$Y_t$ has a unique stationary distibution $\mu$.
\item
$\int_{\mathbb{R}^d} \left( 0\lor\log |x|\right) \rho(dx) < +\infty$.
\item
The integral $\int^t_{-\infty} e^{-(t-u)Q}dX_u$ exists for any $t\in\mathbb{R}$ i.e. $\int^t_s e^{-(t-u)Q}dX_u$ converges in probability as $s\rightarrow -\infty$.
\end{enumerate}
Moreover, when $X$ satisfies above conditions, $\int^t_{-\infty} e^{-(t-u)Q}dX_u $ is distributed according to $\mu$
for any $t\in\mathbb{R}$.
\end{thm}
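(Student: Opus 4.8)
The plan is to prove the three implications (1) $\Rightarrow$ (3), (3) $\Rightarrow$ (1), and (2) $\Leftrightarrow$ (3), and along the way to identify the limiting integral with $\mu$. The computational backbone is the characteristic function of the process started at the origin. Writing $\psi$ for the bracketed exponent in the L\'evy--Khintchine representation of $X$ (so that $\mathbb{E}[e^{\sqrt{-1}\langle z,X_t-X_s\rangle}]=e^{(t-s)\psi(z)}$), using $\langle z,e^{-(t-u)Q}x\rangle=\langle e^{-(t-u)Q^\top}z,x\rangle$ and the approximation of the integral by Riemann--Stieltjes sums, one obtains
\[
\mathbb{E}\left[\exp\left(\sqrt{-1}\langle z,\int_0^t e^{-(t-u)Q}dX_u\rangle\right)\right]=\exp\left(\int_0^t\psi(e^{-vQ^\top}z)\,dv\right),
\]
where $Q^\top$ is the transpose. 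All the equivalences will be read off from the behaviour of this exponent as $t\to\infty$.

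For (3) $\Rightarrow$ (1) I would use stationarity of the increments of $X$: extending $X$ to negative times as in the excerpt and setting $Z_t:=\int_{-\infty}^t e^{-(t-u)Q}dX_u$, the identity $Z_t=e^{-tQ}Z_0+\int_0^t e^{-(t-u)Q}dX_u$ combined with independence and stationarity of increments shows that starting the OU process from the law $\mu$ of $Z_0$ yields $Y_t\sim\mu$ for all $t$, so $\mu$ is stationary. Conversely, for (1) $\Rightarrow$ (3), taking $Y_0\sim\mu$ independent of $(X_u-X_0)_{u\ge0}$ gives, at the level of characteristic functions,
\[
\hat\mu(z)=\hat\mu(e^{-tQ^\top}z)\exp\left(\int_0^t\psi(e^{-vQ^\top}z)\,dv\right).
\]
Since every eigenvalue of $Q$ has strictly positive real part, $e^{-tQ^\top}z\to0$, hence $\hat\mu(e^{-tQ^\top}z)\to1$ by continuity at the origin, so $\exp(\int_0^t\psi(e^{-vQ^\top}z)dv)\to\hat\mu(z)$. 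This forces convergence in law of the accumulating integral; by stationarity of increments the $t\to\infty$ limit coincides in law with the $s\to-\infty$ limit in (3), and because the latter has independent increments in $s$, convergence in law upgrades to convergence in probability by the It\^o--Nisio theorem. The same computation pins down $\hat\mu(z)=\exp(\int_0^\infty\psi(e^{-vQ^\top}z)dv)$, which simultaneously gives uniqueness of $\mu$ in (1) and the ``moreover'' identification of the limiting integral with $\mu$.

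The analytic heart is (2) $\Leftrightarrow$ (3). The integral converges exactly when the Gaussian, drift, and jump parts of $\int_0^\infty\psi(e^{-vQ^\top}z)dv$ converge separately; the Gaussian and drift parts always converge because $v\mapsto e^{-vQ}$ is integrable and square-integrable on $[0,\infty)$ under the spectral hypothesis, so the whole matter reduces (via the Rajput--Rosi\'nski integrability criterion, or directly from $\mathrm{Re}\,\psi$ and the estimate $1-\cos\theta\asymp 1\wedge\theta^2$) to the finiteness of
\[
\int_0^\infty\int_{\mathbb{R}^d}\left(1\wedge|e^{-vQ}x|^2\right)\rho(dx)\,dv.
\]
Choosing rates $0<\lambda<\min\mathrm{Re}\,\mathrm{spec}(Q)\le\max\mathrm{Re}\,\mathrm{spec}(Q)<\Lambda$ I would fix two-sided bounds $c\,e^{-\Lambda v}|x|\le|e^{-vQ}x|\le C\,e^{-\lambda v}|x|$, reducing the $v$-integral to the scalar model: for rate $\lambda$ one has $\int_0^\infty(1\wedge e^{-2\lambda v}|x|^2)dv=\tfrac{\log|x|}{\lambda}+\tfrac{1}{2\lambda}$ when $|x|>1$ and $\tfrac{|x|^2}{2\lambda}$ when $|x|\le1$, so in every case $\int_0^\infty(1\wedge|e^{-vQ}x|^2)dv\asymp\log_+|x|$ for large $|x|$ and $\asymp|x|^2$ for small $|x|$. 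Integrating against $\rho$ yields
\[
\int_0^\infty\int_{\mathbb{R}^d}\left(1\wedge|e^{-vQ}x|^2\right)\rho(dx)\,dv\asymp\int_{|x|\le1}|x|^2\,\rho(dx)+\int_{|x|>1}\log|x|\,\rho(dx),
\]
whose first summand is finite for every L\'evy measure; hence the left side is finite precisely when $\int_{\mathbb{R}^d}(0\vee\log|x|)\rho(dx)<\infty$, which is (2).

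The step I expect to be the main obstacle is the lower-bound (divergence) direction of (2) $\Leftrightarrow$ (3). The upper estimate $1-\cos\langle z,e^{-vQ}x\rangle\le 1\wedge\tfrac12|z|^2|e^{-vQ}x|^2$ makes ``log-moment finite $\Rightarrow$ convergence'' clean, but the reverse requires controlling $1-\cos\langle z,e^{-vQ}x\rangle$ \emph{from below} uniformly in the direction of $e^{-vQ}x$, which fails for a single fixed $z$ when $e^{-vQ}x$ happens to be orthogonal to $z$. I would circumvent this by averaging the real part of the exponent over $z$ in a small ball and invoking the standard inequality $\int_{|z|\le r}(1-\cos\langle z,y\rangle)\,dz\gtrsim_r 1\wedge|y|^2$, together with Fubini, to recover the full integral $\int_0^\infty\int(1\wedge|e^{-vQ}x|^2)\rho\,dv$ as a genuine lower bound; once this is in place, an infinite log-moment forces the exponent's real part to diverge to $-\infty$ for a positive-measure set of $z$, so the candidate limit cannot be a characteristic function and (3) fails. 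Everything else---the characteristic-function reduction, the increment-stationarity arguments for (1) $\Leftrightarrow$ (3), and the It\^o--Nisio upgrade---is soft.
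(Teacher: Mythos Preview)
The paper does not prove this theorem at all: it is stated in the Appendix as a quoted result from Sato--Yamazato \cite{ju} (``cf.\ Theorem~4.1 of \cite{ju}'') and is used as a black box in Section~4.3. So there is no ``paper's own proof'' to compare against.

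That said, your outline is a faithful reconstruction of the classical argument behind the Sato--Yamazato theorem. The characteristic-function identity for $\int_0^t e^{-(t-u)Q}dX_u$, the use of $e^{-tQ^\top}z\to 0$ to extract $\hat\mu$, the It\^o--Nisio upgrade from convergence in law to convergence in probability, and the reduction of the jump part to $\int_0^\infty\int(1\wedge|e^{-vQ}x|^2)\rho(dx)\,dv$ are exactly the ingredients one finds in \cite{ju} and in the related work of Jurek--Vervaat and Rajput--Rosi\'nski. Your scalar computation $\int_0^\infty(1\wedge e^{-2\lambda v}|x|^2)\,dv\asymp \log_+|x|$ is the right way to identify the log-moment condition, and your concern about the lower bound in the divergence direction is well placed: the averaging-over-$z$ trick you describe is precisely how one avoids the orthogonality degeneracy. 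There is no gap in the plan; it simply goes beyond what the present paper attempts.
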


\noindent \textbf{Data availability statement.}
Data sharing not applicable to this article as no datasets were
generated or analysed during the current study.

\end{document}